\newtheorem{theorem}{Theorem}
\newtheorem{proposition}[theorem]{Proposition}
\newtheorem{remark}[theorem]{Remark}
\theoremstyle{definition}
\newtheorem{definition}[theorem]{Definition}
\newcommand{\GP}{\mathcal{GP}}
\newcommand{\E}{\mathbb{E}}
\newcommand{\given}{\,|\,}
\newcommand{\bigo}{\mathcal{O}}
\newcommand{\R}{\mathbb{R}}
\newcommand{\bx}{\mathbf{x} }
\def\argmin{\mathop{\text{arg\,min}}}
\newcommand{\prt}{\mathcal{P}}
\newcommand{\loss}{\mathcal{L}}
\def\sigmasqicvest{\hat{\sigma}_\textup{ICV}^2}
\def\sigmasqcvest{\hat{\sigma}_\textup{CV}^2}
\def\sigmasqmlest{\hat{\sigma}_\textup{ML}^2}
\title{Comparing Scale Parameter Estimators for Gaussian Process Interpolation with the Brownian Motion Prior: Leave-One-Out Cross Validation and Maximum Likelihood\footnote{Published in SIAM/ASA Journal on Uncertainty Quantification (2025), 13(2), pp. 679--717. \newline\hspace*{15.5pt}\href{https://doi.org/10.1137/23M1586884}{https://doi.org/10.1137/23M1586884}}}
\author{
  Masha Naslidnyk\thanks{Department of Statistical Science, University College London, UK.}
  \and
  Motonobu Kanagawa\thanks{Data Science Department, EURECOM, Biot, France.}
  \and
  Toni Karvonen\thanks{Department of Mathematics and Statistics, University of Helsinki, Finland.}
  \and
  Maren Mahsereci\thanks{Yahoo Research, Berlin, Germany. Work performed while at the University of Tübingen, Germany.}
}
\date{}
\begin{document}
\maketitle

\begin{abstract}

Gaussian process (GP) regression is a Bayesian nonparametric method for regression and interpolation, offering a principled way of quantifying the uncertainties of predicted function values. For the quantified uncertainties to be well-calibrated, however, the kernel of the GP prior has to be carefully selected. In this paper, we theoretically compare two methods for choosing the kernel in GP regression: cross-validation and maximum likelihood estimation. Focusing on the scale-parameter estimation of a Brownian motion kernel in the noiseless setting, we prove that cross-validation can yield asymptotically well-calibrated credible intervals for a broader class of ground-truth functions than maximum likelihood estimation, suggesting an advantage of the former over the latter.  Finally, motivated by the findings, we propose \emph{interior cross validation}, a procedure that adapts to an even broader class of ground-truth functions.

\end{abstract}

\tableofcontents

\section{Introduction} \label{sec:introduction}

Gaussian process (GP) regression (or kriging) is a Bayesian nonparametric method for regression and interpolation that has been extensively studied in statistics and machine learning \citep{OHagan1978, Stein1999, rassmussen2006gaussian}. Its key property is that it enables uncertainty quantification of estimated function values in a principled manner, which is crucial for applications involving decision-making, safety concerns, and scientific discovery. As such, GP regression has been a core building block of more applied algorithms, including Bayesian optimisation \citep{jones1998efficient,shahriari2015taking,garnett2023bayesian}, probabilistic numerical computation \citep{hennig2015probabilistic, cockayne2019bayesian, pnbook2022}, and calibration and emulation of computer models~\citep{sacks1989design,kennedy2001bayesian,o2006bayesian, beck2016sequential, gu2018robust}, to name just a few.

GP regression estimates an unknown function $f$ from its observations as follows. First, a {\em prior distribution} for $f$ is defined as a GP, by specifying its {\em kernel}, and mean function.
Given $N$ observations of $f$, the {\em posterior distribution} of $f$ is derived, which is another GP with mean function $m_N$ and kernel (or covariance function) $k_N$.
The function value $f(x)$ at any input $x$ can then be predicted as the posterior mean $m_N(x)$, and its uncertainty is quantified using the posterior standard deviation $\sqrt{\smash[b]{k_N(x)}} \coloneqq \sqrt{ \smash[b]{k_N(x,x) }}$.
Specifically, a {\em credible interval} of $f(x)$ can be constructed as the interval $[ m_N(x) - \alpha \sqrt{\smash[b]{k_N(x)}},  m_N(x) + \alpha \sqrt{ \smash[b]{k_N(x)}} ]$ for a constant $\alpha > 0$ (for example, $\alpha \approx 1.96$ leads to the 95\% credible interval). Such uncertainty estimates
constitute key ingredients in the above applications of GP regression.

For GP uncertainty estimates to be reliable, the posterior standard deviation $\sqrt{\smash[b]{k_N(x)}}$ should, ideally, decay at the {\em same} rate as the prediction error $|m_N(x) - f(x)|$ decreases, with the increase of sample size $N$. Otherwise, GP uncertainty estimates are either asymptotically {\em overconfident} or {\em underconfident}.  For example, if  $\sqrt{\smash[b]{k_N(x)}}$ goes to $0$ faster than the error $|m_N(x) - f(x)|$, then the credible interval $[m_N(x) - \alpha \sqrt{k_N(x)}, m_N(x) + \alpha \sqrt{\smash[b]{k_N(x)}}  ] $ will {\em not} contain the true value $f(x)$ as $N$ increases for {\em any} fixed constant $\alpha > 0$ (asymptotically overconfident). If  $\sqrt{\smash[b]{k_N(x)}}$ goes to $0$ slower than the error $|m_N(x) - f(x)|$, then the confidence interval $[m_N(x) - \alpha \sqrt{\smash[b]{k_N(x)}}, m_N(x) + \alpha \sqrt{\smash[b]{k_N(x)}}  ] $ will get larger than the error $|m_N(x) - f(x)|$ as $N$ increases (asymptotically underconfident). Both of these cases are not desirable in practice, as GP credible intervals will not be accurate estimates of prediction errors.

\begin{figure}[t]
    \centering
    \includegraphics[width=\linewidth]{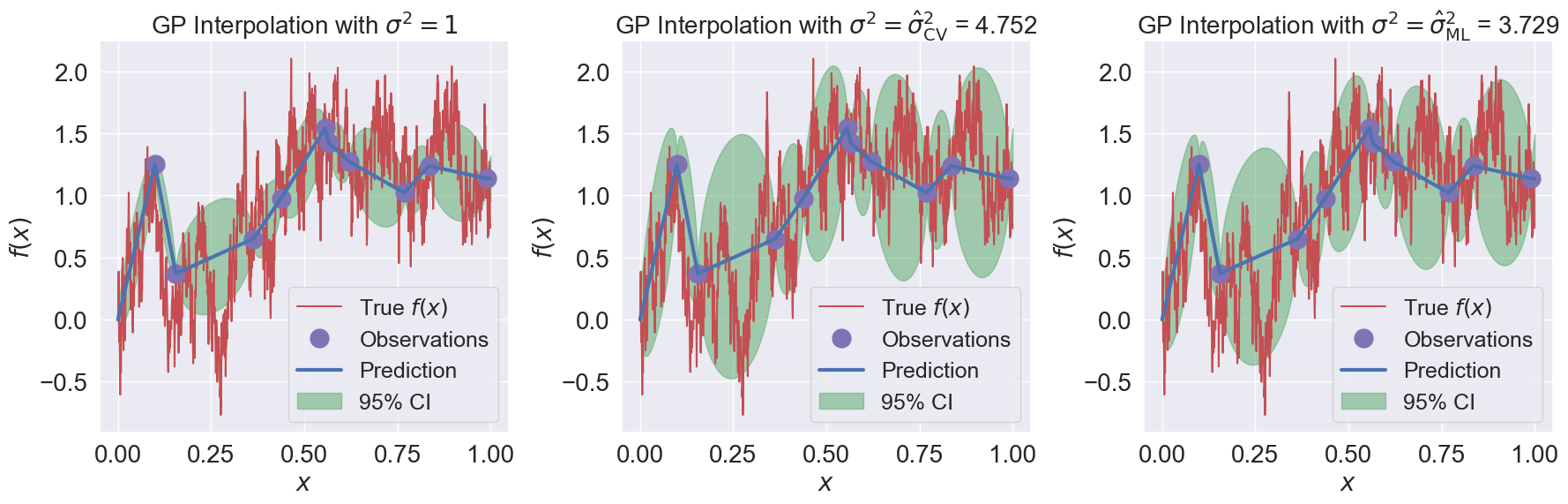}
    \caption{GP interpolation of a fractional Brownian motion with the Hurst parameter $H = 0.2$ (smoothness $l+\alpha = 0.2$) using the Brownian motion kernel~\eqref{eq:bm-kernel-intro} with three different scale parameters: $\sigma^2 = 1$ (left),  $\sigma^2 = \hat{\sigma}_{\rm CV}^2 = 4.752$ given by the LOO-CV estimator (middle) and  $\sigma^2 = \hat{\sigma}_{\rm ML}^2 = 3.729$ obtained with the ML estimator (right). In each figure, the red trajectory represents the path of the fractional Brownian motion, the purple circles the training data, the blue curve the posterior mean $m_N(x)$ and the green shade the 95 \% credible interval $[ m_N(x) - 1.96\sigma \sqrt{\smash[b]{k_N(x)}},  m_N(x) + 1.96\sigma \sqrt{\smash[b]{k_N(x)}} ]$.}
    \label{fig:FBM-H02}
\end{figure}

\begin{figure}[t]
    \centering
    \includegraphics[width=\linewidth]{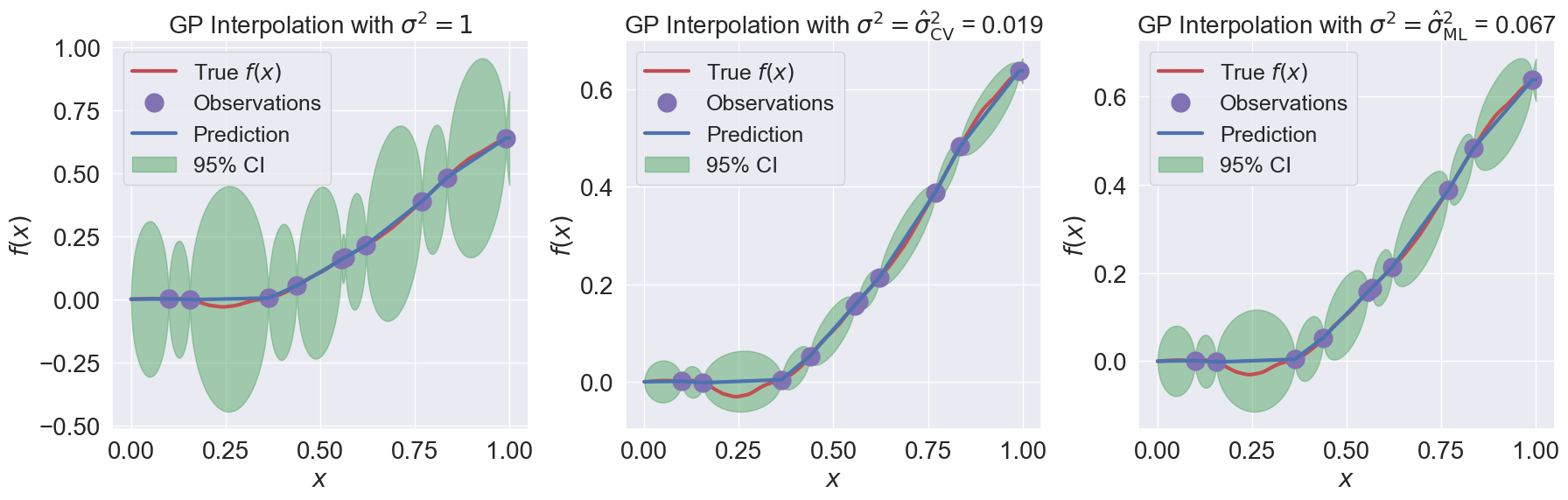}
    \caption{GP interpolation of an integrated fractional Brownian motion with the Hurst parameter $H = 0.5$ (smoothness $l+\alpha = 1.5$) using the Brownian motion kernel~\eqref{eq:bm-kernel-intro}
 with three different scale parameters: $\sigma^2 = 1$ (left),  $\sigma^2 = \hat{\sigma}_{\rm CV}^2 = 0.019$ given by the LOO-CV estimator (middle) and  $\sigma^2 = \hat{\sigma}_{\rm ML}^2 = 0.067$ obtained with the ML estimator (right). For the explanation of the figures, see the caption of Figure \ref{fig:FBM-H02}.}
    \label{fig:IFBM-H05}
\end{figure}

Unfortunately, in general, the posterior standard deviation $\sqrt{\smash[b]{k_N(x)}}$ does {\em not} decay at the same rate as the prediction error $| f(x) - m_N(x) |$, because, as is well-known, $\sqrt{\smash[b]{k_N(x)}}$ does {\em not} depend on the true function $f$; see \eqref{eq:gp-cov-func} in Section~\ref{sec:GP-regres}. Exceptionally, if the function $f$ is a sample path of the GP prior (the well-specified case),  GP uncertainty estimates can be well-calibrated. However, in general, the unknown $f$ is not exactly a sample path of the GP prior (the misspecified case), and the posterior standard deviation $\sqrt{\smash[b]{k_N(x)}}$ does not scale with the prediction error $| f(x) - m_N(x) |$. Figures \ref{fig:FBM-H02} and \ref{fig:IFBM-H05}  (the left panels) show examples where the true function $f$ is not a sample of the GP prior and where the GP uncertainty estimates are not well-calibrated.

\subsection{Scale Parameter Estimation}

To obtain sensible uncertainty estimates, it is therefore
necessary to adapt the posterior standard deviation $\sqrt{\smash[b]{k_N(x)}}$ to the function $f$. One simple way to achieve this is to introduce the {\em scale parameter} $\sigma^2 > 0$ and parametrise the kernel as
\begin{equation} \label{eq:scale-kernel}
k_\sigma(x,x') \coloneqq \sigma^2 k(x,x'),
\end{equation}
where $k$ is the original kernel. GP regression with this kernel $k_\sigma$ yields the posterior mean function $m_N$, which is not influenced by $\sigma^2$, and the posterior covariance function $\sigma^2 k$, which is scaled by $\sigma^2$. If $\sigma^2$ is estimated from observed data of $f$, the estimate $\hat{\sigma}^2$ depends on $f$, and so does the resulting posterior standard deviation $\hat{\sigma} \sqrt{\smash[b]{k_N(x)}}$.

 One approach to scale-parameter estimation is the method of {\em maximum likelihood (ML)}, which optimises $\sigma^2$ to maximise the marginal likelihood of the GP \citep[Section 5.4]{rassmussen2006gaussian}. The ML approach is popular for general hyperparameter optimisation in GP regression. Another less common way in the GP literature is {\em cross-validation (CV)}, which optimises $\sigma^2$ to maximise the average predictive likelihood with held-out data~\citep{sundararajan2001predictive}. For either approach, the optimised scale parameter can be obtained analytically in computational complexity $\bigo(N^3)$.
Figures \ref{fig:FBM-H02} and \ref{fig:IFBM-H05} (middle and right panels) demonstrate that both approaches yield uncertainty estimates better calibrated than the original estimates without the scale parameter.

Do these scale parameter estimators lead to asymptotically well-calibrated uncertainty estimates? To answer this question, it is necessary to understand their convergence properties as the sample size $N$ increases.
Most existing theoretical works focus on the well-specified case where there is a ``true'' scale parameter $\sigma_0^2$ such that the unknown $f$ is a GP with the kernel $\sigma_0^2 k$. In this case, both the ML and CV estimators have been shown to be consistent in estimating the true $\sigma_0^2$ \citep[e.g.,][]{Ying1991, Zhang2004,Bachoc2017, bachoc2020asymptotic}.
However, in general, no ``true'' scale parameter $\sigma^2_0$ exists such that the unknown $f$ is a GP with the covariance $\sigma_0^2 k$. In such misspecified cases, not much is known about the convergence properties of both estimators.
\citet{Karvonen2020} analyse the ML estimator for the scale parameter, assuming that $f$ is a deterministic function. They derive upper bounds (and lower bounds in some cases) for the ML estimator; see \citet{Wang2021} for closely related work. To our knowledge, no theoretical work exists for the CV estimator for the scale parameter in the misspecified case. \cite{Bachoc2013} and \cite{petit2021gaussian} empirically compare the ML and CV estimators under different model misspecification settings. We will review other related works in Section~\ref{sec:related-work}.

\subsection{Contributions}

This work studies the convergence properties of the ML and CV estimators, $\hat{\sigma}_{\rm ML}^2$ and $\hat{\sigma}_{\rm CV}^2$, of the scale parameter $\sigma^2$ in GP regression, to understand whether they lead to asymptotically well-calibrated uncertainty estimates. In particular, we provide the first theoretical analysis of the CV estimator $\hat{\sigma}_{\rm CV}^2$ when the GP prior is misspecified, and also establish novel results for the ML estimator $\hat{\sigma}_{\rm ML}^2$.

To facilitate the analysis, we focus on the following simplified setting. For a constant $T > 0$, let $[0, T] \subset \mathbb{R}$ be the input domain. Let $k$ in \eqref{eq:scale-kernel} be the Brownian motion kernel
\begin{equation}
\label{eq:bm-kernel-intro}
    k(x, x') = \min(x, x') \quad \text{ for } \quad x, x' \in [0, T].
\end{equation}
With this choice, a sample path of the GP prior, roughly speaking, has smoothness of 1/2 (in terms of the differentiability; we will be more rigorous in later sections).

We assume that the true unknown function $f$ has the smoothness $l + \alpha$, where $l \in \{0\} \cup \mathbb{N}$ and $0 < \alpha \leq 1$.
The GP prior has well-specified smoothness if $l = 0$ and $\alpha = 1/2$. Other settings of $l$ and $\alpha$ represent misspecified cases. If $l = 0$ and $\alpha <1/2$, the true function $f$ is rougher than the GP prior (Figure \ref{fig:FBM-H02}); if $l = 0$ and $\alpha > 1/2$ or $l \geq 1$, the function $f$ is smoother than the GP prior. We focus on the noise-free setting where the function values $f(x_1), \dots, f(x_N)$ are observed at input points $x_1, \dots, x_N \in [0,T]$.

Our main results are new upper and lower bounds for the asymptotic rates of the CV estimator $\hat{\sigma}_{\rm CV}^2$ and the ML estimator $\hat{\sigma}_{\rm ML}^2$ as $N \to \infty$ (Section \ref{sec:limit-behaviour-for-sigma}). The results suggest that the CV estimator can yield asymptotically well-calibrated uncertainty estimates for a broader class of functions $f$ than the ML estimator; thus, the former has an advantage over the latter (Section \ref{sec:discussion}).   More specifically, asymptotically well-calibrated uncertainty estimates may be obtained with the CV estimator for the range $0 < l + \alpha \leq 3/2$ of smoothness of the true function, while this range becomes $0 < l + \alpha \leq 1$ with the ML estimator and is narrower. This finding is consistent with the example in Figure~\ref{fig:IFBM-H05}, where the true function has smoothness $l + \alpha = 3/2$ and is thus smoother than the GP prior. The uncertainty estimates of the CV estimator appear to be well-calibrated, while those of the ML estimator are unnecessarily wide, failing to adapt to the smoothness. Motivated by these insights, we propose a method called \emph{interior cross-validation}, and show it accommodates an even wider range of smoothness of the true function than the CV estimator.

This paper is structured as follows. After reviewing related works in
\Cref{sec:related-work}, we introduce the necessary background on the ML and CV approaches to scale parameter estimation for GP regression in \Cref{sec:par-est-for-gps}. We describe the setting of the theoretical analysis in \Cref{sec:setting},   present our main results in  \Cref{sec:limit-behaviour-for-sigma}, and discuss its consequences on uncertainty quantification in  \Cref{sec:discussion}. We report simulation experiments in \Cref{sec:experiments}, conclude in \Cref{sec:conclusion}, and present proofs in \Cref{sec:proofs}.

\subsection{Related work}
\label{sec:related-work}

We review here related theoretical works on hyper-parameter selection in GP regression in the noiseless setting. We categorise them into two groups based on how the true unknown function $f$ is modelled: random and deterministic.

\paragraph{Random setting.}

One group of works models the ground truth $f$ as a random function, specifically as a GP. Most of these works model $f$ as a GP with a Mat\'ern-type kernel and analyse the ML estimator. Under the assumption that the GP prior is correctly specified,  asymptotic properties of the ML estimator for the scale parameter and other parameters have been studied \citep{Stein1990, Ying1991, Ying1993, LohKam2000, Zhang2004, Loh2005, Du2009, Anderes2010, WangLoh2011, Kaufman2013,Bevilacqua2019}.
Recently \citet{LohSunWen2021} and \citet{LohSun2023} have constructed consistent estimators of various parameters for many commonly used kernels, including Mat\'erns.  \citet{Chen2021} and \citet{Petit2023} consider a periodic version of Mat\'ern GPs,  and show the consistency of the ML estimator for its smoothness parameter. To our knowledge, the only existing theoretical result for ML estimation of the scale parameter in the misspecified random setting considers oversmoothing~\citep[Theorem 4.2]{karvonen2021estimation}. Oversmoothing refers to the situation where the chosen kernel is smoother than the true function. In~\Cref{sec:random-setting} (Theorem~\ref{res:holder-spaces-exp-ml}), we provide a result for the undersmoothing case, which occurs when the chosen kernel is less smooth than the true function.

In contrast, few theoretical works exist for the CV estimator.  \citet{Bachoc2017} study the leave-one-out (LOO) CV estimator for the Mat\'ern-1/2 model (or the Laplace kernel) with one-dimensional inputs, in which case the GP prior is an Ornstein--Uhlenbeck (OU) process. Assuming the well-specified case where the true function is also an OU process, they prove the consistency and asymptotic normality of the CV estimator for the microergodic parameter in the fixed-domain asymptotic setting. \citet{bachoc2018asymptotic} and \citet{bachoc2020asymptotic} discuss another CV estimator that uses the mean square prediction error as the scoring criterion of CV (thus different from the one discussed here) in the increasing-domain asymptotics.   \citet{Bachoc2013} and \citet{petit2021gaussian}  perform empirical comparisons of the ML and CV estimators under different model misspecification settings. Thus, to our knowledge, no theoretical result exists for the CV estimator of the scale parameter in the random misspecified setting, which we provide in \Cref{sec:random-setting} (Theorem~\ref{res:holder-spaces-exp}).

\paragraph{Deterministic setting.}
Another line of research assumes that the ground truth $f$ is a fixed function belonging to a specific function space \citep{Stein1993}. \citet{XuStein2017} assumed that the ground truth $f$ is a monomial on $[0,1]$ and proved some asymptotic results for the ML estimator when the kernel $k$ is Gaussian.
As mentioned earlier, \citet{Karvonen2020} proved asymptotic upper (and, in certain cases, also lower) bounds on the ML estimator $\hat{\sigma}_{\rm ML}^2$ of the scale parameter $\sigma^2$; see \citet{Wang2021} for a closely related work. \citet{Karvonen2022} has studied the ML and LOO-CV estimators for the smoothness parameter in the Mat\'ern model; see also  \citet{Petit2023}. \citet{BenSalem2019} and \citet{KarvonenOates2023} proved non-asymptotic results on the length-scale parameter in the Mat\'ern and related models. Thus, there has been no work for the CV estimator of the scale parameter $\sigma^2$ in the deterministic setting, which we provide in Section \ref{sec:results-deterministic} (Theorem \ref{res:holder-spaces}); we also prove a corresponding result for the ML estimator (Theorem \ref{res:holder-spaces-ml}).

\section{Background}
\label{sec:par-est-for-gps}

This section briefly reviews GP regression and the ML and LOO-CV estimators of kernel parameters.

\subsection{Gaussian process regression}
\label{sec:GP-regres}

We first explain GP regression (or interpolation). Let $\Omega$ be a set, and $f\colon \Omega \to \mathbb{R}$ be an unknown function of interest. Suppose $N$ function values $f(x_1), \dots, f(x_N)$ are observed at pairwise distinct input points $x_1, \dots, x_N \in \Omega$.
The task here is to estimate $f$ based on the data $(\bx, f(\bx))$, where  $f(\bx) \coloneqq [f(x_1), \dots, f(x_N)]^\top \in \mathbb{R}^N$  and $\bx \coloneqq [x_1, \ldots, x_N]^\top \in \Omega^N$.

In GP regression, first a prior distribution of the unknown $f$ is defined as a GP by specifying its mean function $m \colon \Omega \to \R$ and covariance function (kernel) $k \colon \Omega \times \Omega \to \R$; we may write $f \sim \GP(m,k)$ to indicate this. Conditioned on the data $(\bx, f(\bx))$, the posterior distribution of $f$ is again a GP whose mean function $m_N: \Omega \to \mathbb{R}$ and covariance function $k_N: \Omega \times \Omega \to \mathbb{R}$ are given by
\begin{subequations}
\label{eq:posterior-moments}
  \begin{align}
    m_N(x) & \coloneqq m(x) + k(x, \bx)^\top k(\bx, \bx)^{-1} \left(f(\bx) - m(\bx)\right), \quad x \in \Omega, \\
    k_N(x, x') & \coloneqq k(x, x') - k(x, \bx)^\top k(\bx, \bx)^{-1} k(x', \bx), \quad x, x' \in \Omega, \label{eq:gp-cov-func}
  \end{align}
\end{subequations}%
where $m(\bx) \coloneqq [m(x_1), \ldots, m(x_N)]^\top \in \R^N$ and $k(x, \bx) \coloneqq [k(x, x_1), \ldots, k(x, x_N)]^\top \in \R^N$, and
\begin{equation}
\label{eq:gram-matrix}
    k(\bx, \bx) \coloneqq
    \begin{bmatrix}
    k(x_1, x_1) & \dots & k(x_1, x_N) \\
    \vdots & \ddots & \vdots \\
    k(x_N, x_1) & \dots & k(x_N, x_N)
    \end{bmatrix} \in \mathbb{R}^{N \times N}
\end{equation}%
is the Gram matrix.
Throughout this paper, we assume that the points $\bx$ are such that the Gram matrix is non-singular.
For notational simplicity, we may write the posterior variance as
\begin{equation*}
  k_N(x) \coloneqq k_N(x, x), \quad x \in \Omega.
\end{equation*}
For simplicity and as commonly done, we henceforth assume that the prior mean function $m$ is the zero function, $m(\cdot) \equiv 0$.

While the GP prior assumes that the unknown function $f$ is a sample path of the GP with the specified kernel $k$, this assumption does not hold in general, i.e., model misspecification occurs. In this case, as described in Figures \ref{fig:FBM-H02} and \ref{fig:IFBM-H05} (left), the posterior standard deviation $\sqrt{\smash[b]{k_N(x)}}$, which is supposed to quantify the uncertainty of the unknown function value $f(x)$,  may not be well calibrated with the prediction error $| m_N(x) - f(x) |$. This issue can be addressed by selecting the kernel $k$ or its parameters from the data $( \bx, f(\bx) )$; we will explain this topic next.

\subsection{Kernel parameter estimation}
\label{sec:kernel-parameter-estimation}

The selection of the kernel $k$ is typically performed by defining a parametric family of kernels $\{k_\theta \}_{\theta \in \Theta}$ and selecting the parameter $\theta $ based on an appropriate criterion.  Here $\Theta$ is a parameter set, and $k_\theta: \Omega \times \Omega \to \mathbb{R}$ for each $\theta \in \Theta$ is a kernel.

\paragraph{Maximum likelihood (ML) estimation.}
The ML estimator maximises the log-likelihood of the GP $f$ with kernel  $k_\theta$ under the data $(\bx, f(\bx))$:
\begin{equation*}
  \log p(f(\bx) \given \bx, \theta) = -\frac{1}{2} \bigg( f(\bx)^\top k_\theta(\bx, \bx)^{-1} f(\bx) + \log \det k_\theta(\bx, \bx) + n \log (2\pi) \bigg),
\end{equation*}
where $\det k_\theta(\bx, \bx)$ is the determinant of the Gram matrix $k_\theta(\bx, \bx)$ (see, e.g., \citealt[Section~5.4.1]{rassmussen2006gaussian}).
With the additive terms that do not depend on $\theta$ removed from $\log p(f(\bx) \given \bx, \theta)$, this is equivalent to minimising the loss function
\begin{equation} \label{eq:ML-loss-function}
    \loss_\textup{ML}(\theta) := f(\bx)^\top k_\theta (\bx, \bx)^{-1} f(\bx) + \log \det k_\theta (\bx, \bx).
\end{equation}
In general, $\loss_\textup{ML}(\theta)$ may not have a unique minimiser, so that any ML estimator satisfies
\begin{equation*}
     \hat{\theta}_\textup{ML} \in \argmin_{\theta \in \Theta}\loss_\textup{ML}(\theta).
\end{equation*}

\paragraph{Leave-one-out cross-validation (LOO-CV).}
The LOO-CV estimator \citep[e.g.,][Section~5.4.2]{rassmussen2006gaussian}, which we may simply call the CV estimator, is an alternative to the ML estimator. It maximises the average log-predictive likelihood
\begin{equation} \label{eq:cv-estimator-any-kernel}
  \sum_{n=1}^N \log p( f(x_n) \given x_n, \bx_{\setminus n}, f(\bx_{\setminus n}), \theta)
\end{equation}
with held-out data $(x_n, f(x_n))$, where $n = 1, \dots, N$, based on the data $(\bx_{\setminus n}, f(\bx_{\setminus n}))$, where $\bx_{\setminus n}$ denotes the input points with $x_n$ removed:
\begin{equation*}
  \bx_{\setminus n} = [x_1, \dots, x_{n-1}, x_{n+1}, \dots , x_N]^\top \in \Omega^{N - 1}.
\end{equation*}
Let $m_{\theta, \setminus  n}$ and $ k_{\theta, \setminus  n}$ denote the posterior mean and covariance functions of GP regression with the kernel $k_\theta$ and the data $(\bx_{\setminus n}, f(\bx_{\setminus n})$.
Because each $p( f(x_n) \given x_n, \bx_{\setminus n}, f(\bx_{\setminus n}), \theta)$ is the Gaussian density of $f(x_n)$ with mean $m_{\theta, \setminus  n}(x_n)$ and variance $ k_{\theta, \setminus  n}(x_n) \coloneqq k_{\theta, \setminus  n}(x_n,x_n)$, removing additive terms that do not depend on $\theta$ and reversing the sign in \eqref{eq:cv-estimator-any-kernel} yields the following CV objective function:
\begin{equation} \label{eq:CV-loss-function}
    \loss_\textup{CV}(\theta) = \sum_{n=1}^N \frac{\left[f(x_n) - m_{\theta, \setminus n}(x_n)\right]^2}{ k_{\theta, \setminus  n}(x_n)}  + \log k_{\theta, \setminus  n} (x_n).
\end{equation}%
The CV estimator is then defined as its minimiser:
\begin{equation*}
  \hat{\theta}_\textup{CV} \in \argmin_{\theta \in \Theta}\loss_\textup{CV}(\theta).
\end{equation*}
As for the ML estimator, the CV objective function and its first-order gradients can be computed in closed form in $\bigo(N^3)$  time~\citep{sundararajan2001predictive}.

\paragraph{Scale parameter estimation.}
As explained in Section \ref{sec:introduction}, we consider the family of kernels $k_\sigma (x,x') \coloneqq \sigma^2 k(x,x')$ parametrised with the scale parameter $\sigma^2>0$, where $k$ is a fixed kernel, and study the estimation of $\sigma^2$ using the CV and ML estimators, denoted as $\hat{\sigma}_{\rm CV}^2$ and  $\hat{\sigma}_{\rm ML}^2$, respectively. In this case, both $\hat{\sigma}_{\rm ML}^2$ and  $\hat{\sigma}_{\rm CV}^2$ can be derived in closed form by differentiating~\eqref{eq:ML-loss-function} and~\eqref{eq:CV-loss-function}.

Let $m_{n-1}$ and $k_{n-1}$ be the posterior mean and variance functions of GP regression using the kernel $k$ and the first $n-1$ training observations $(x_1, f(x_1)), \dots, (x_{n-1}, f(x_{n-1}))$. Let $m_0(\cdot) \coloneqq 0$ and $k_0(x,x) \coloneqq k(x,x)$. Then the ML estimator is given by
\begin{equation}
\label{eq:sigma-ml}
  \sigmasqmlest = \frac{f(\bx)^\top k(\bx, \bx)^{-1} f(\bx)}{N} = \frac{1}{N} \sum_{n=1}^N \frac{[ f(x_n) - m_{n-1}(x_n) ]^2}{k_{n-1}(x_n)}.
\end{equation}
This expression of the ML estimator is relatively well-known; see e.g.~Section~4.2.2 in \citet{XuStein2017} or Proposition~7.5 in \citet{KarvonenOates2023}.

On the other hand, the CV estimator $\hat{\sigma}_{\rm CV}^2$ is given by
\begin{align}
\label{eq:sigma-cv}
    \sigmasqcvest = \frac{1}{N} \sum_{n=1}^N  \frac{\left[f(x_n) - m_{\setminus  n}(x_n)\right]^2}{ k_{\setminus  n}(x_n)},
\end{align}%
where  $m_{\backslash n}$ and $k_{\backslash n}$ are the posterior mean and covariance functions of GP regression using the kernel $k$ and data $(\bx_{\setminus n}, f(\bx_{\setminus n}))$ with $(x_n, f(x_n))$ removed:
\begin{align*}
    m_{\setminus n}(x) &= k(\bx_{\setminus n}, x)^\top k(\bx_{\setminus n}, \bx_{\setminus n})^{-1} f(\bx_{\setminus n}), \\
    k_{\setminus n}(x, x') &= k(x, x') -  k(\bx_{\setminus n}, x)^\top k(\bx_{\setminus n}, \bx_{\setminus n})^{-1} k(\bx_{\setminus n}, x').
\end{align*}%

Notice the similarity between the two expressions \eqref{eq:sigma-ml} and \eqref{eq:sigma-cv}. The difference is that the ML estimator uses $k_{n-1}$ and $m_{n-1}$, which are based on the first $n-1$ training observations, while the CV estimator uses  $k_{\backslash n}$ and $m_{\backslash n}$ obtained with $N-1$ observations, for each $n = 1,\dots, N$. Therefore, the CV estimator uses all the data points more evenly than the ML estimator. This difference may be the source of the difference in their asymptotic properties established later.

\begin{remark}
  \label{remark:ml-cv}
  As suggested by the similarity between \eqref{eq:sigma-ml} and~\eqref{eq:sigma-cv}, there is a deeper connection between ML and CV estimators in general. For instance, \citet[Proposition~2]{FongHolmes2020} have shown that the Bayesian marginal likelihood equals the average of leave-$p$-out CV scores. We prove this result for the special case of scale parameter estimation in GP regression in \Cref{sec:proofs-ml-cv-relation}. Another notable example is the work in~\citet{ginsbourger2021fast}, where the authors showed that, when corrected for the covariance of residuals, the CV estimator of the scale parameter reverts to MLE.
\end{remark}

\section{Setting}
\label{sec:setting}

This section describes the settings and tools for our theoretical analysis: the Brownian motion kernel in Section \ref{sec:bm-kernel-setting}; sequences of partitions in Section \ref{sec:sequence-of-partitions}; the H\"older class of functions in Section \ref{sec:holder-space}; fractional Brownian motion in Section \ref{sec:fbm}; and functions of finite quadratic variation in Section \ref{sec:quadratic-variation}.

\subsection{Brownian motion kernel}
\label{sec:bm-kernel-setting}

As explained in Section \ref{sec:introduction}, for the kernel $k$ we focus on the Brownian motion kernel on the domain $\Omega = [0, T]$ for some $T > 0$:
\begin{equation*}
  k(x, x') = \min(x, x').
\end{equation*}
The resulting kernel $k_\sigma(x,x') = \sigma^2 k(x,x')$ induces a Brownian motion prior for GP regression.

We assume that the input points $\bx=[x_1, \dots x_N]^\top$ for GP regression are positive and ordered:
\begin{equation*}
    0 < x_1 < x_2 < \dots < x_N \leq T.
\end{equation*}
The positivity ensures that the Gram matrix \eqref{eq:gram-matrix} is non-singular; the proof is given in~\Cref{sec:explicit-post-mean-cov}.

As is well-known~\citep[see, for instance,][Example~1]{Diaconis1988} and can be seen in Figures \ref{fig:FBM-H02} and \ref{fig:IFBM-H05}, the posterior mean function $m_N$ in \eqref{eq:posterior-moments} using the Brownian motion kernel becomes the {\em piecewise linear interpolant} of the observations $( \bx, f(\bx) )$. See \eqref{eq:explicit-post-mean} and \eqref{eq:explicit-post-cov} in Section \ref{sec:explicit-post-mean-cov} for the proof and explicit expressions of the posterior mean and covariance functions.

\subsection{Sequences of partitions}
\label{sec:sequence-of-partitions}

For our asymptotic analysis, we assume that the input points $x_1, \dots, x_N \in [0,T]$ cover the domain $[0,T]$ more densely as the sample size $N$ increases. To make the dependence on the size $N$ explicit, we write $\mathcal{P}_N \coloneqq (x_{N,n})_{n=1}^N \subset [0,T]$ as a point set of size $N$, and assume that they are ordered as
\begin{equation*}
    0 \eqqcolon x_{N,0} < x_{N, 1} < x_{N, 2} < \dots < x_{N, N} = T
\end{equation*}%
Then $\mathcal{P}_N$ defines a partition of $[0,T]$ into $N$ subintervals $[x_{N,n}, x_{N, n+1}]$.
When there is no risk of confusion, we may write $x_n$ instead of $x_{N,n}$ for simplicity.
Note that we do {\em not} require the nesting $\mathcal{P}_N \subset \mathcal{P}_{N+1}$ of partitions.

We define the {\em mesh size} of partition $\mathcal{P}_N$ as the longest subinterval in the partition:
$$
\|\prt_N\| \coloneqq \max_{n \in \{0, 1,\ldots,N-1\}} (x_{N, n+1} - x_{N, n} )
$$
The decay rate of the mesh size $\|\prt_N\|$ quantifies how quickly the points in $\mathcal{P}_N$ cover the interval $[0,T]$. In particular, the decay rate $\mathcal{P}_N = \bigo(N^{-1})$ implies that the length of every subinterval is asymptotically upper bounded by $1/N$.
At the same time, if each subinterval is asymptotically lower bounded by $1/N$, we call the sequence of partitions $(\prt_N)_{N \in \mathbb{N}}$ {\em quasi-uniform}, more formally defined in~\citet[Definition 4.6]{Wendland2005} as follows.

\begin{definition}
For each $N \in \mathbb{N}$, let $\mathcal{P}_N \coloneqq (x_{N,n})_{n=1}^N \subset [0,T]$. Define $\Delta x_{N, n} \coloneqq x_{N, n+1} - x_{N, n}$.
Then the sequence of partitions $(\prt_N)_{N \in \mathbb{N}}$ is called {\em quasi-uniform} if there exists a constant  $1 \leq C_\textup{qu} < \infty$ such that
\begin{equation*}
\sup_{N \in \mathbb{N}} \frac{\max_n \Delta x_{N, n}}{\min_n \Delta x_{N, n}} = C_\textup{qu}.
\end{equation*}
\end{definition}

Quasi-uniformity, as defined here, requires that the ratio of the longest subinterval, $\max_{n} \Delta x_{N, n}$, to the shortest one, $\min_{n} \Delta x_{N, n}$, is upper-bounded by $C_{\rm qu}$ for all $N \in \mathbb{N}$. Since $\min_n \Delta x_{N, n} \leq T N^{-1}$ and $\max_n \Delta x_{N, n} \geq T N^{-1}$ for any partition of $[0, T]$, quasi-uniformity implies that all subintervals are asymptotically upper and lower bounded by $1/N$, as we have, for all $N \in \mathbb{N}$ and $n_0 \in \{0, \dots, N-1\}$,
\begin{equation}
  \label{eq:quasi-uniformity-2}
 \frac{T N^{-1} }{C_\textup{qu}}  \leq \min_n \Delta x_{N, n} \leq \Delta x_{N, n_0} \leq \max_n \Delta x_{N, n} \leq T C_\textup{qu} N^{-1}.
\end{equation}
Therefore, quasi-uniform sequences of partitions are \emph{space-filling designs} that cover the space ``almost'' uniformly. Trivially, equally-spaced points (or uniform grids) satisfy the quasi-uniformity with $C_{\rm qu} = 1$.~\citet{wenzel2021novel} showed that points chosen sequentially to minimise GP posterior variance for a Sobolev kernel are quasi-uniform. We refer to~\citet[p.\@~6]{wynne2021convergence} for further examples and a discussion on quasi-uniformity.

\subsection{H\"older spaces} \label{sec:holder-space}

\Cref{sec:results-deterministic} studies the deterministic setting where the true unknown function $f$ is assumed to belong to a H\"older space of functions. To define this space, we first need the following definition.

\begin{definition}
\label{def:holder-continuity}
For $0 < \alpha \leq 1$, a function $f: [0, T] \to \R$ is \textit{$\alpha$-Hölder continuous} if there exists a constant $L \geq 0$ such that, for all $x, x' \in [0, T]$,
\begin{equation*}
    |f(x) - f(x')| \leq L |x - x' |^ \alpha.
\end{equation*}
Any such constant $L$ is called a \emph{H\"older constant} of $f$.
\end{definition}

For $l \in \mathbb{N} \cup \{ 0 \}$, denote by $C^l([0,T])$ the space of functions $f \colon [0, T] \to \R$ such that the $l$\textsuperscript{th} derivative $f^{(l)}$ exists and is continuous. For $l = 0$, this is the space of continuous functions.   H\"older spaces are now defined as follows.

\begin{definition}
  \label{def:hoelder-space}
  Let $l \in \mathbb{N} \cup \{0\}$ and $0 < \alpha \leq 1$. The \textit{H\"older space $C^{l, \alpha}([0, T])$} consists of functions $f \in C^l([0 ,T])$ whose $l$\textsuperscript{th} derivative $f^{(l)}$ is $\alpha$-H\"older continuous.
\end{definition}

Intuitively, $l + \alpha$ represents the smoothness of least-smooth functions in $C^{l, \alpha} ( [0, T] )$. It is well-known that a sample path of Brownian motion is almost surely $\alpha$-H\"older continuous if and only if $\alpha < 1/2$~\citep[e.g.,][Corollary 1.20]{morters2010brownian}, and thus it belongs to the H\"older space $C^{l, \alpha} ([0,T])$ with $l = 0$ and $\alpha = 1/2 - \varepsilon$ almost surely for arbitrarily small $\varepsilon > 0$; in this sense, the smoothness of a Brownian motion is $1/2$. As such, as is well-known~\citep[e.g.,][Theorem 1.27]{morters2010brownian}, a Brownian motion is almost nowhere differentiable almost surely.

Note that we have the following strict inclusions:\footnote{These inclusions follow from the following facts: By the definition of H\"older continuity, an $\alpha_1$-H\"older continuous function is $\alpha_2$-H\"older continuous if $\alpha_1 > \alpha_2$; continuously differentiable functions are $\alpha$-H\"older continuous for any $0< \alpha \leq 1$; not all Lipschitz functions are differentiable.}
\begin{itemize}
  \item $C^{l_1, \alpha_1}([0, T]) \subsetneq C^{l_2, \alpha_2}([0, T])$ if (a) $l_1 > l_2$ or (b) $l_1 = l_2$ and $\alpha_1 > \alpha_2$,
  \item $C^{l+1}([0, T]) \subsetneq C^{l, 1}([0, T])$.
\end{itemize}

\subsection{Fractional Brownian motion} \label{sec:fbm}

\Cref{sec:random-setting} considers the random setting where $f$ is a {\em fractional (or integrated fractional) Brownian motion} (see \citet[e.g.,][Chapter IX]{mandelbrot1982fractal}).
Examples of these processes can be seen in Figures \ref{fig:FBM-H02}, \ref{fig:IFBM-H05}, \ref{fig:cv} and \ref{fig:cv-vs-ml}.

A fractional Brownian motion on $[0,T]$ with Hurst parameter $0 < H  < 1$ is a Gaussian process whose kernel is given by
\begin{equation}
\label{eq:fbm-def}
    k_{0,H}(x,x') =  \big( \, \lvert x \rvert^{2H} + \lvert x' \rvert^{2H} - \lvert x-x'\rvert^{2H}\big) / 2.
\end{equation}
Note that if $H = 1/2$, this is the Brownian motion kernel: $k_{0,1/2}(x,x') = \min (x,x')$.
The Hurst parameter $H$ quantifies the smoothness of the fractional Brownian motion.
If $f_{\rm FBM} \sim \GP(0, k_{0,H})$ for $H \in (0, 1)$, then $f_{\rm FBM} \in C^{0, H-\varepsilon} ( [0, T] )$ almost surely for arbitrarily small $\varepsilon > 0$ ~\citep[e.g.,][Proposition~1.6]{Nourdin2012}.\footnote{That $f_{\rm FBM} \notin C^{0, H} ( [0, T] )$ almost surely for $f_{\rm FBM} \sim \GP(0, k_{0,H})$ with $H \in (0, 1)$ is a straightforward corollary of, for example, Theorem~3.2 in~\citet{wang_almost-sure_2007}.}

An integrated Brownian motion with Hurst parameter $H$ is defined via the integration of a fractional Brownian motion with the same Hurst parameter: if $f_{\rm FBM} \sim \GP(0, k_{0, H})$, then
\begin{equation*}
  f_\text{iFBM}(x) = \int_0^x f_\text{FBM}(z) \, \mathrm{d} z, \quad x \in [0,T]
\end{equation*}
is an integrated Brownian motion with Hurst parameter $H$. It is a zero-mean GP with the kernel
\begin{equation} \label{eq:iFBM-kernel-explicit}
  \begin{split}
  k_{1,H}(x, x') ={}& \int_0^x \int_0^{x'} \big( \lvert z \rvert^{2H} + \lvert z' \rvert^{2H} - \lvert z-z'\rvert^{2H}\big) / 2 \, \mathrm{d} z \, \mathrm{d} z' \\
  ={}& \frac{1}{2(2H+1)} \bigg( x' x^{2H+1} + x (x')^{2H+1} \\
  &\hspace{2cm} - \frac{1}{2(H+1)} \big[ x^{2H+2} + (x')^{2H+2} - |x - x'|^{2H+2} \big] \bigg).
  \end{split}
\end{equation}
Because differentiating an integrated fractional Brownian motion $f_{\rm iFBM} \sim \GP(0, k_{1, H})$ yields a fractional Brownian motion $f_{\rm FBM} \sim \GP(0, k_{0,H})$, a sample path of the former satisfies $f_{\rm iFBM} \in C^{1,H-\varepsilon}([0, T])$ almost surely for arbitrarily small $\varepsilon > 0$; therefore the smoothness of $f_{\rm iFBM}$ is $1 + H$.

\subsection{Functions of finite quadratic variation}
\label{sec:quadratic-variation}

Some of our asymptotic results use the notion of functions of {\em finite quadratic variation}, defined below.
\begin{definition}
For each $N \in \mathbb{N}$, let $\mathcal{P}_N \coloneqq (x_{N,n})_{n=1}^N \subset [0,T]$, and suppose that $\|\prt_N\| \to 0$ as $N \to \infty$.
Then a function $f : [0, T] \to \mathbb{R}$ is defined to have {\em finite quadratic variation} with respect to $\prt \coloneqq (\prt_N)_{N \in \mathbb{N}}$, if the limit
\begin{equation}
\label{eq:quadratic-variation}
    V^2(f) \coloneqq \lim_{N \to \infty } \sum_{n=0}^{N-1} \big[ f(x_{N, n+1}) - f(x_{N, n}) \big]^2
\end{equation}
exists and is finite.
We write $V^2(f, \prt)$ when it is necessary to indicate the sequence of partitions.
\end{definition}

Quadratic variation is defined for a specific sequence of partitions $(\prt_N)_{N \in \mathbb{N}}$ and may take different values for different sequences of partitions~\citep[Remark 1.36]{morters2010brownian}.
For conditions that guarantee the invariance of quadratic variation on the sequence of partitions, see, for instance,~\cite{ContBas2023}.
Note also that the notion of quadratic variation differs from that of $p$-variation for $p=2$, which is defined as the supremum over all possible sequences of partitions whose mesh sizes tend to zero.

If $f \in C^{0,\alpha}([0, T])$ with $\alpha > 1/2$ and $\|\prt_N\| = \bigo(N^{-1})$ as $N \to \infty$, then we have $V^2(f) = 0$, because in this case
\begin{align*}
  \sum_{n=0}^{N-1} \big[ f(x_{N, n+1}) - f(x_{N, n}) \big]^2 \leq   N L^2 \max_n (\Delta x_{N, n})^{2 \alpha} &= \bigo (N^{1 - 2\alpha}) \to 0
\end{align*}
as $N \to \infty$.
Therefore, given the inclusion properties of H\"older spaces (see Section \ref{sec:holder-space}), we arrive at the following standard proposition.

\begin{proposition}
\label{res:qv_of_smooth_functions}
Suppose that the partitions $(\prt_N)_{N \in \mathbb{N}}$ are such that $\| \prt_N \| = \bigo (N^{-1})$. If $f \in C^{l, \alpha}([0, T])$ for $l+\alpha>1/2$, then $V^2(f) = 0$.
\end{proposition}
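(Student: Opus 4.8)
The plan is to reduce the statement to the single scalar Hölder estimate already displayed just before the proposition, by first converting the hypothesis $f \in C^{l,\alpha}([0,T])$ with $l+\alpha > 1/2$ into $0$-th order Hölder continuity with an exponent strictly above $1/2$. Concretely, I would first argue that it suffices to prove $V^2(f) = 0$ under the weaker-looking assumption that $f \in C^{0,\beta}([0,T])$ for some $\beta > 1/2$, and then show that the hypothesis implies exactly this.

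For that reduction I would split into two cases using the inclusion properties recorded in \Cref{sec:holder-space}. If $l = 0$, the hypothesis $l + \alpha > 1/2$ is just $\alpha > 1/2$, so one takes $\beta = \alpha$ and there is nothing to do. If $l \geq 1$, then $f \in C^l([0,T]) \subseteq C^1([0,T])$, so $f'$ is continuous on the compact interval $[0,T]$ and hence bounded; by the mean value theorem $f$ is Lipschitz, i.e. $f \in C^{0,1}([0,T])$, and since $C^{0,1}([0,T]) \subsetneq C^{0,\beta}([0,T])$ for every $\beta \in (1/2,1]$ one may again pick any such $\beta > 1/2$. In either case $f$ is $\beta$-Hölder continuous with some constant $L \geq 0$ and some $\beta > 1/2$.

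The remaining step is the estimate itself. Writing $\Delta x_{N,n} = x_{N,n+1} - x_{N,n}$ and bounding each squared increment by $L^2 (\Delta x_{N,n})^{2\beta} \leq L^2 \|\prt_N\|^{2\beta}$, summing over the $N$ subintervals gives
\[
  \sum_{n=0}^{N-1} \big[ f(x_{N,n+1}) - f(x_{N,n}) \big]^2 \;\leq\; N L^2 \|\prt_N\|^{2\beta} \;=\; \bigo\big(N^{1-2\beta}\big),
\]
where the last equality uses the mesh assumption $\|\prt_N\| = \bigo(N^{-1})$. Since $\beta > 1/2$ we have $1 - 2\beta < 0$, so the right-hand side tends to $0$; as the partial sums are nonnegative and squeezed below a null sequence, their limit exists and equals $0$, giving $V^2(f) = 0$.

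Honestly, there is no genuine obstacle here: the argument is elementary and the decisive inequality is already spelled out in the text preceding the statement. The only point requiring a little care is the case $l \geq 1$, where one must invoke compactness of $[0,T]$ (to bound $f'$) together with the mean value theorem to drop from smoothness of order $l+\alpha$ down to plain Lipschitz continuity; after that the mesh condition does all the work, and it is essential that the exponent on $\|\prt_N\|$ be the full $2\beta > 1$, since that is precisely what beats the factor $N$ arising from the number of subintervals.
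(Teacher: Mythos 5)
Your proposal is correct and follows exactly the paper's route: the text preceding the proposition establishes the bound $\sum_{n=0}^{N-1} [f(x_{N,n+1}) - f(x_{N,n})]^2 \leq N L^2 \max_n (\Delta x_{N,n})^{2\alpha} = \bigo(N^{1-2\alpha}) \to 0$ for $f \in C^{0,\alpha}([0,T])$ with $\alpha > 1/2$, and then invokes the inclusion properties of H\"older spaces from \Cref{sec:holder-space} for the case $l \geq 1$, which is precisely your reduction to $C^{0,\beta}([0,T])$ with $\beta > 1/2$. Your only addition is spelling out the inclusion argument (boundedness of $f'$ on the compact interval plus the mean value theorem), which the paper leaves implicit.
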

If the mesh size tends to zero faster than $1/\log N$, in that $\| \prt_N \|=o(1/\log N)$, then the quadratic variation of almost every sample path of the Brownian motion on the interval $[0, T]$ equals~$T$~\citep{10.2307/2959347}.
This is of course true for partitions which have the faster decay $\| \prt_N \|=\bigo(N^{-1})$.

\section{Main results}
\label{sec:limit-behaviour-for-sigma}

This section presents our main results on the asymptotic properties of the CV and ML estimators, $\sigmasqcvest$ and $\sigmasqmlest$, for the scale parameter. \Cref{sec:results-deterministic} considers the deterministic setting where the true function $f$ is fixed and assumed to belong to a H\"older space. \Cref{sec:random-setting} studies the random setting where $f$ is an (integrated) fractional Brownian motion. In~\Cref{sec:icv-estimators}, we use the insights obtained in the proofs for the deterministic and random settings to propose a \emph{interior cross-validation} (ICV) estimator, and show its asymptotic properties are an improvement on those of CV and ML estimators.

\subsection{Deterministic setting} \label{sec:results-deterministic}

We present our main results for the deterministic case where the true function $f$ is fixed and assumed to be in a H\"older space $C^{l, \alpha}([0, T])$.  \Cref{res:holder-spaces} below provides asymptotic upper bounds on the CV estimator $\sigmasqcvest$ for different values of the smoothness parameters $l$ and $\alpha$ of the H\"older space.

\begin{theorem}[Rate of CV decay in H\"older spaces]
\label{res:holder-spaces}
Suppose that $f$ is an element of $C^{l, \alpha}([0, T])$, with $l \geq 0$ and $0 < \alpha \leq 1$, such that  $f(0)=0$, and the interval partitions $(\prt_N)_{N \in \mathbb{N}}$ have bounded mesh sizes $\|\prt_N \|=\bigo(N^{-1})$ as $N \to \infty$. Then
\begin{equation}
\label{eq:main-result}
\sigmasqcvest = \bigo\big( N^{1 - \min\{2(l + \alpha), 3\}} \big)
= \begin{cases}
    \bigo\left(N^{1 - 2 \alpha}\right) &\text{ if } \quad l = 0, \\
    \bigo\left(N^{-1 - 2\alpha}\right) &\text{ if } \quad  l = 1 \text{ and } \alpha < 1/2, \\
    \bigo\left(N^{- 2}\right) &\text{ if } \quad l = 1 \text{ and } \alpha \geq 1/2, \\
    \bigo\left(N^{- 2}\right) &\text{ if } \quad l \geq 2.
    \end{cases}
\end{equation}
\end{theorem}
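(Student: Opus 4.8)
The plan is to exploit the Markov property of Brownian motion, which renders every leave-one-out quantity \emph{local}: conditioning on all design points except $x_n$ reduces, for an interior node, to conditioning on the two neighbours $x_{n-1}$ and $x_{n+1}$ only, and the hypothesis $f(0)=0$ lets the implicit node $x_0=0$ act as the left neighbour of $x_1$. Writing $h_n \coloneqq x_n-x_{n-1}$, $g_n \coloneqq x_{n+1}-x_n$, $\delta_n^{\mathrm L}\coloneqq f(x_n)-f(x_{n-1})$ and $\delta_n^{\mathrm R}\coloneqq f(x_{n+1})-f(x_n)$, the leave-one-out posterior at an interior node is a Brownian bridge, so $k_{\setminus n}(x_n)=h_n g_n/(h_n+g_n)$ and $f(x_n)-m_{\setminus n}(x_n)=(g_n\delta_n^{\mathrm L}-h_n\delta_n^{\mathrm R})/(h_n+g_n)$, whence the $n$-th CV summand equals $(g_n\delta_n^{\mathrm L}-h_n\delta_n^{\mathrm R})^2/\big(h_n g_n (h_n+g_n)\big)$. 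The rightmost node $x_N$ is special: removing it leaves no right neighbour, the posterior extrapolates the last value, so $k_{\setminus N}(x_N)=h_N$ and the summand is $(\delta_N^{\mathrm L})^2/h_N$. I would first establish these three explicit identities, using the posterior formulas of \Cref{sec:explicit-post-mean-cov}.

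The heart of the argument is that the numerator $g_n\delta_n^{\mathrm L}-h_n\delta_n^{\mathrm R}$ cancels to the order dictated by the fact that the Brownian-bridge posterior mean is the \emph{linear} interpolant through the neighbours, which reproduces affine functions exactly. For $l\ge 1$ I would use the integral representations $\delta_n^{\mathrm L}=\int_{x_{n-1}}^{x_n}f'$ and $\delta_n^{\mathrm R}=\int_{x_n}^{x_{n+1}}f'$ and subtract $f'(x_n)$ inside each integral; the first-order terms cancel and Hölder continuity of $f^{(l)}$ controls the remainder, giving $|g_n\delta_n^{\mathrm L}-h_n\delta_n^{\mathrm R}|\lesssim g_n h_n^{\beta}+h_n g_n^{\beta}$ with $\beta=\min(l+\alpha,2)$, where the cap at $2$ reflects that linear interpolation cannot resolve behaviour beyond second order. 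For $l=0$ no cancellation is available and I would bound the increments directly by $\alpha$-Hölder continuity, obtaining the same estimate with $\beta=\alpha$. Substituting into the summand and summing shows the interior contribution is of order $N^{1-2\min(l+\alpha,2)}$.

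Separately, the boundary summand satisfies $(\delta_N^{\mathrm L})^2/h_N\lesssim h_N^{2\min(l+\alpha,1)-1}$, which is of order $N^{-1}$ as soon as $l\ge 1$ (then $\delta_N^{\mathrm L}\sim f'(x_N)h_N$) and therefore contributes order $N^{-2}$ to $\sigmasqcvest$ after the $1/N$ prefactor. Taking the maximum of the interior order $N^{1-2\min(l+\alpha,2)}$ and the boundary order $N^{-2}$ produces exactly $N^{1-\min\{2(l+\alpha),3\}}$, since $\max\{1-2\min(l+\alpha,2),\,-2\}=1-\min\{2(l+\alpha),3\}$. Thus the rightmost term is precisely what caps the exponent at $3$ and forces the saturation $\bigo(N^{-2})$ once $l+\alpha>3/2$, while the interior term dominates for $l+\alpha\le 3/2$; reading off $\min\{2(l+\alpha),3\}$ on each range of $(l,\alpha)$ then gives the case table.

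The step I expect to be the main obstacle is the summation over a possibly \emph{non-uniform} partition, where only the upper mesh bound $\|\prt_N\|=\bigo(N^{-1})$ is at hand. After cancellation each summand is a ratio whose denominator $h_n g_n(h_n+g_n)$ can be tiny when gaps are uneven, and the delicate point is to prove $\sum_n (g_n h_n^{\beta}+h_n g_n^{\beta})^2/\big(h_n g_n(h_n+g_n)\big)=\bigo(N^{2-2\min(l+\alpha,2)})$ using only $h_n,g_n\le\|\prt_N\|$ together with $\sum_n(h_n+g_n)\le 2T$. This goes through cleanly whenever the relevant power $2\beta-1$ is nonnegative, i.e.\ for all $l\ge 1$ and for $l=0,\alpha\ge 1/2$; in the genuinely rough regime $l=0,\alpha<1/2$ the negative power of $h_n$ means a single small gap can inflate the sum, so there one must supplement the mesh condition with a \emph{lower} bound on the gaps (quasi-uniformity) to replace $h_n,g_n$ by constant multiples of $N^{-1}$. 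Getting these non-uniform sums, and the boundary exponent, exactly right — rather than relying on the clean uniform-grid heuristic — is the technical crux.
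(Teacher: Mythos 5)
Your proposal is correct and follows essentially the same route as the paper's proof: the same decomposition of $\sigmasqcvest$ into two boundary terms and an interior term via the explicit leave-one-out bridge formulas of \Cref{sec:explicit-post-mean-cov}, and the same first-order cancellation in the interior numerator --- your device of subtracting $f'(x_n)$ inside the integral representations of the increments is equivalent to the paper's mean-value-theorem step $\Delta x_{n-1}(f_{n+1}-f_n) - \Delta x_n (f_n - f_{n-1}) = \Delta x_{n-1}\Delta x_n\big[f'(\tilde x_n) - f'(\tilde x_{n-1})\big]$ followed by H\"older continuity of $f'$. The only cosmetic differences are that you treat all $l$ at once through the cap $\beta = \min(l+\alpha,2)$, where the paper instead reduces to the cases $l=0$ and $l=1$, $\alpha \in (0,1/2]$ via the H\"older-space inclusions, and that you absorb the $n=1$ summand into the interior (legitimate, since the prior and the hypothesis $f(0)=0$ make $x_0 = 0$ a genuine left neighbour), where the paper bounds $B_{1,N}$ separately.

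The one point where you genuinely diverge is your final paragraph, and there you are right, not the paper. In the regime $l=0$, $\alpha < 1/2$ the paper's interior estimate terminates with
\begin{equation*}
  \frac{2L^2}{N} \sum_{n=2}^{N-1} \big( \Delta x_n^{2\alpha-1} + \Delta x_{n-1}^{2\alpha-1} \big) = \bigo\big(N^{1-2\alpha}\big),
\end{equation*}
but since $2\alpha - 1 < 0$, the hypothesis $\Delta x_n \leq \|\prt_N\| = \bigo(N^{-1})$ bounds each summand from \emph{below} rather than above; the displayed rate requires $\Delta x_n \gtrsim N^{-1}$, i.e.\ precisely the gap lower bound (quasi-uniformity) that you propose to add, and that the paper's random-setting results (\Cref{res:holder-spaces-exp,res:holder-spaces-exp-ml}) do assume. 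The same caveat applies to the paper's bounds \eqref{eq:bound-B1N} and \eqref{eq:bound-B2N} on $B_{1,N}$ and $B_{2,N}$ when $\alpha < 1/2$. Your concern is not vacuous: for a Weierstrass-type $f \in C^{0,\alpha}([0,T])$ whose oscillation on every interval of length $\delta$ is bounded below by $c\delta^\alpha$, a mesh-$\bigo(N^{-1})$ partition containing one adversarially placed gap of width about $N^{-m}$ yields a single interior CV summand of size about $N^{m(1-2\alpha)}$, hence $\sigmasqcvest \gtrsim N^{m(1-2\alpha)-1}$, which exceeds $N^{1-2\alpha}$ for large $m$. So in the rough regime the statement needs a hypothesis its assumptions do not supply, your write-up supplies it explicitly, and in every regime where $2\beta - 1 \geq 0$ (all $l \geq 1$, and $l = 0$ with $\alpha \geq 1/2$) your bounds, including the boundary analysis that caps the exponent at $3$, coincide with the paper's.
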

\begin{proof}
See Section~\ref{sec:proofs-deterministic}.
\end{proof}

\Cref{res:holder-spaces-ml} below is a corresponding result for the ML estimator $\sigmasqmlest$. Note that a similar result has been obtained by \citet[Proposition~4.5]{Karvonen2020}, where the function $f$ is assumed to belong to a Sobolev space and the kernel is a Mat\'ern-type kernel. \Cref{res:holder-spaces-ml} is a version of this result where $f$ is in a H\"older space and the kernel is the Brownian motion kernel; we provide it for completeness and ease of comparison.

\begin{theorem}[Rate of ML decay in H\"older spaces]
\label{res:holder-spaces-ml}
Suppose that $f$ is a non-zero element of $C^{l, \alpha}([0, T])$, with $l \geq 0$ and $0 < \alpha \leq 1$, such that  $f(0)=0$, and the interval partitions $(\prt_N)_{N \in \mathbb{N}}$ have bounded mesh sizes $\|\prt_N \|=\bigo(N^{-1})$ as $N \to \infty$. Then
\begin{equation}
\label{eq:main-result-ml}
\hat{\sigma}_\textup{ML}^2 = \bigo\big( N^{1 - \min\{2(l + \alpha), 2\}} \big)
= \begin{cases}
    \bigo\left(N^{1 - 2 \alpha}\right) &\text{ if } \quad l = 0, \\
    \Theta\left(N^{- 1}\right) &\text{ if } \quad l \geq 1.
    \end{cases}
\end{equation}
\end{theorem}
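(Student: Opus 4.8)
The plan is to collapse $\sigmasqmlest$ into an explicit sum of squared increments of $f$ and then estimate that sum using H\"older continuity. Starting from the closed form \eqref{eq:sigma-ml}, I would invoke the Markov property of the Brownian motion prior (equivalently, the explicit posterior expressions from \Cref{sec:explicit-post-mean-cov}): because the design points are ordered and $x_n$ lies to the right of the first $n-1$ of them, the posterior mean is flat beyond the last observation, so $m_{n-1}(x_n)=f(x_{n-1})$ and $k_{n-1}(x_n)=x_n-x_{n-1}$. Setting $x_0\coloneqq 0$ and using $f(0)=0$ to fold the $n=1$ term into the same pattern gives
\begin{equation*}
\sigmasqmlest=\frac{1}{N}\sum_{n=1}^N\frac{\big(f(x_n)-f(x_{n-1})\big)^2}{x_n-x_{n-1}}.
\end{equation*}
The hypothesis $f(0)=0$ is used precisely here: without it the first summand equals $f(x_1)^2/x_1$, which is of order $N$ and would dominate everything.

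For the upper bound I would split on $l$. If $l\ge 1$ then $f\in C^1([0,T])$ is Lipschitz with constant $M\coloneqq\|f'\|_\infty$, so the mean value theorem gives $\big(f(x_n)-f(x_{n-1})\big)^2\le M^2(x_n-x_{n-1})^2$, whence each summand is at most $M^2(x_n-x_{n-1})$. Since the increments sum to $T$, this yields $\sigmasqmlest\le M^2T/N=\bigo(N^{-1})$, with no assumption on the mesh beyond $\|\prt_N\|\to 0$. If $l=0$, H\"older continuity gives $\big(f(x_n)-f(x_{n-1})\big)^2\le L^2(x_n-x_{n-1})^{2\alpha}$, so each summand is at most $L^2(x_n-x_{n-1})^{2\alpha-1}$. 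When $\alpha\ge 1/2$ the exponent is nonnegative, so $(x_n-x_{n-1})^{2\alpha-1}\le\|\prt_N\|^{2\alpha-1}=\bigo(N^{1-2\alpha})$; summing the $N$ terms and dividing by $N$ recovers $\bigo(N^{1-2\alpha})$.

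The step I expect to be the main obstacle is the rough regime $l=0$, $\alpha<1/2$, where $2\alpha-1<0$ and the summand $(x_n-x_{n-1})^{2\alpha-1}$ is \emph{largest} on the shortest subintervals. Here the mesh upper bound $\|\prt_N\|=\bigo(N^{-1})$ is not enough by itself: a single subinterval shrinking much faster than $N^{-1}$ (for instance near the origin for $f(x)=x^\alpha$) contributes a term of order $(x_n-x_{n-1})^{2\alpha-1}\to\infty$, which can break the claimed rate. To rule this out I would use a matching \emph{lower} bound on the subinterval lengths, i.e.\ quasi-uniformity \eqref{eq:quasi-uniformity-2}, which gives $x_n-x_{n-1}\ge TN^{-1}/C_\textup{qu}$ and hence $(x_n-x_{n-1})^{2\alpha-1}=\bigo(N^{1-2\alpha})$, after which the estimate closes exactly as in the $\alpha\ge 1/2$ case. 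This is where I would be most careful about the precise hypotheses on the design.

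Finally, to sharpen the $l\ge 1$ bound to $\Theta(N^{-1})$ I would prove a matching lower bound from the same sum. Dropping the nonnegative tail after some index $m\le N$ and applying Cauchy--Schwarz,
\begin{equation*}
\sum_{n=1}^N\frac{\big(f(x_n)-f(x_{n-1})\big)^2}{x_n-x_{n-1}}\ge\frac{\big(\sum_{n=1}^m|f(x_n)-f(x_{n-1})|\big)^2}{\sum_{n=1}^m(x_n-x_{n-1})}\ge\frac{f(x_m)^2}{T},
\end{equation*}
where the last step uses $\sum_{n=1}^m\big(f(x_n)-f(x_{n-1})\big)=f(x_m)$ and $\sum_{n=1}^m(x_n-x_{n-1})=x_m\le T$. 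Since $f$ is continuous and non-zero and $\|\prt_N\|\to 0$, I can choose a design point $x_m$ with $|f(x_m)|\ge c_0>0$ for all large $N$, giving $\sigmasqmlest\ge c_0^2/(TN)=\Omega(N^{-1})$. This lower bound in fact holds for every $l\ge 0$, so it is the upper bound that dictates when the rate is faster than $N^{-1}$; combined with the $l\ge 1$ upper bound it delivers the stated $\Theta(N^{-1})$.
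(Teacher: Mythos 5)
Your proposal is correct, and its skeleton coincides with the paper's proof: both reduce $\sigmasqmlest$ via the Markov property to the normalised increment sum $\frac{1}{N}\sum_{n=1}^N (f(x_n)-f(x_{n-1}))^2/\Delta x_{n-1}$ (recorded as \eqref{eq:sigma-ml-w-deltas}; incidentally, the paper's justification contains the typo $m_{n-1}(x_n)=f(x_n)$ where $f(x_{n-1})$ is meant, as you have it) and then apply the H\"older condition. You diverge genuinely in the $l\geq 1$ lower bound. The paper does not split into upper and lower estimates: it applies the mean value theorem and Riemann integrability of $(f')^2$ to get the exact limit $N\sigmasqmlest \to \int_0^T f'(x)^2\,\mathrm{d}x>0$, which gives $\Theta(N^{-1})$ in one stroke and identifies the limiting constant as the squared RKHS norm exploited in the remark following the theorem. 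Your route --- a Lipschitz upper bound paired with the telescoping/Cauchy--Schwarz bound $\sum_{n\leq m}(f_n-f_{n-1})^2/\Delta x_{n-1}\geq f(x_m)^2/T$ --- is more elementary and buys more generality: it yields $\sigmasqmlest=\Omega(N^{-1})$ for \emph{every} nonzero continuous $f$ with $f(0)=0$, for any $l\geq 0$, which cleanly explains why ML cannot adapt at rates faster than $N^{-1}$; the price is that you do not recover the limiting constant.

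On the regime $l=0$, $\alpha<1/2$, your caution is well founded, and here you are in fact more careful than the paper. The paper's proof asserts $\frac{L^2}{N}\sum_n \Delta x_{n-1}^{2\alpha-1}=\bigo(N^{1-2\alpha})$ directly from $\|\prt_N\|=\bigo(N^{-1})$, but when $2\alpha-1<0$ the mesh bound controls $\Delta x_{n-1}^{2\alpha-1}$ from below rather than above, so this step is not justified by the stated hypothesis. Your example is decisive: for $f(x)=x^\alpha$ the first summand equals exactly $x_1^{2\alpha-1}$, so a design with $x_1$ shrinking super-polynomially (while all other subintervals remain of length $\bigo(N^{-1})$, so the mesh condition holds) breaks the claimed rate. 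Your patch --- invoking the quasi-uniformity lower bound \eqref{eq:quasi-uniformity-2} so that $\Delta x_{n-1}^{2\alpha-1}=\bigo(N^{1-2\alpha})$ termwise --- closes the argument, and it mirrors the assumption the paper itself adopts in the random-setting analogue (\Cref{res:holder-spaces-exp-ml}). Just state explicitly that in this regime you prove the theorem under a design assumption strictly stronger than the mesh-size hypothesis appearing in the theorem statement; as written, neither your argument nor the paper's covers $\alpha<1/2$ from the mesh upper bound alone.
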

\begin{proof}
  See Section~\ref{sec:proofs-deterministic}. The proof is similar to that of \Cref{res:holder-spaces}.
\end{proof}

Figure \ref{fig:rate-regimes-intro-deterministic} summarises the rates of \Cref{res:holder-spaces,res:holder-spaces-ml}. When $l + \alpha \leq 1$ (or $l = 0$ and $\alpha \leq 1$), the rates of  $\sigmasqcvest$ and $\sigmasqmlest$ are $\bigo(N^{1-2\alpha})$, so both of them may decay (or grow, for $l+\alpha<1/2$) adaptively to the smoothness $l+\alpha$ of the function $f$. However, when $l + \alpha > 1$, the situation is different: the decay rate of $\sigmasqmlest$ is always $\Theta(N^{-1})$ and thus insensitive to $\alpha$, while that of $\sigmasqcvest$ is $\left(N^{-1 - 2\alpha}\right)$ for $l=1$ and $\alpha \in (0, 1/2]$. Therefore the CV estimator may be adaptive to a broader range of the smoothness $0 < l + \alpha \leq 3/2$ of the function $f$ than the ML estimator (whose range of adaptation is $0 < l + \alpha \leq 1$).

Note that \Cref{res:holder-spaces,res:holder-spaces-ml} provide asymptotic upper bounds (except for the case $l \geq 1$ of \Cref{res:holder-spaces-ml}) and may not be tight if the function $f$ is smoother than ``typical'' functions in $C^{l,\alpha}([0,T])$.\footnote{For example, if $f(x) = | x - 1/2 |$ with $T = 1$, we have $f \in C^{0,1} ([0,T])$, as $f$ is Lipschitz continuous in this case. However, $f$ is almost everywhere infinitely differentiable except at one point $x = 1/2$, so it is, in this sense, much smoother than ``typical'' functions in $C^{0,1} ([0,T])$.} In \Cref{sec:random-setting}, we show that the bounds are indeed tight in expectation if $f$ is a fractional (or integrated fractional) Brownian motion with smoothness $l + \alpha$.

In the deterministic setting, a potential approach for obtaining a matching lower bound could use the rate of decay of the Fourier coefficients as a notion of smoothness, instead of the H\"older smoothness condition on the function $f$. Certain self-similarity conditions based on the decay rate and behaviour of Fourier coefficients are routinely used to study coverage of Bayesian credible sets~\citep[e.g.,][]{Szabo2015, HadjiSzabo2021} as they define classes of functions that cannot ``deceive'' parameter estimators. Motivated by this, we attempted to adapt the argument in~\citet[Section~4.2]{sniekers2015adaptive} and \citet[Section~10]{sniekers2020adaptive} to derive a matching lower bound under a self-similarity assumption on the Fourier coefficients. However, the bounds obtained through this approach proved sub-optimal in our setting. A different technique may therefore be required.

\begin{figure}
    \centering
    \includegraphics[width=0.8\textwidth]{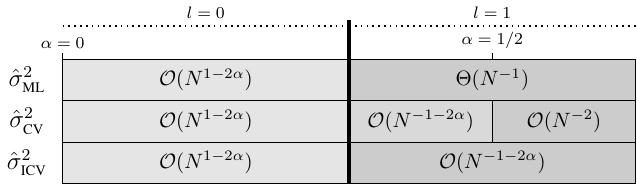}
    \caption{Rates of decay for the ML, CV and ICV estimators from \Cref{res:holder-spaces,res:holder-spaces-ml,res:holder-spaces-icv}.
    Observe that the CV estimator's range of adaptation to the smoothness $l+\alpha$ is wider than the ML estimator's, and the ICV estimator's range of adaptation is wider than that for both the CV and ML estimators.}
    \label{fig:rate-regimes-intro-deterministic}
\end{figure}

\begin{remark}
The proof of \Cref{res:holder-spaces-ml} shows that for $l = 1$ we have $\sigmasqmlest = \Theta(N^{-1})$ whenever $\|\prt_N \| \to 0$ as $N \to \infty$.
More precisely, it establishes that
  \begin{equation*}
    N \sigmasqmlest \to \| f' \|_{\mathcal L^2([0, T])} :=\int_0^T f'(x)^2 \, \mathrm{d} x \quad \text{ as } \quad N \to \infty.
  \end{equation*}
 Note that the $\mathcal L^2([0, T])$ norm of $f'$ in the right hand side equals the norm of $f$ in the reproducing kernel Hilbert space of the Brownian motion kernel~\citep[e.g.,][Section~10]{VaartZanten2008}
Therefore, this fact is consistent with a similar more general statement in \citet[Proposition~3.1]{Karvonen2020}.

\end{remark}

In addition to the above results, \Cref{res:fqv-estimator} below shows the limit of the CV estimator $\sigmasqcvest$ if the true function $f$ is of finite quadratic variation.

\begin{theorem}
\label{res:fqv-estimator}
For each $N \in \mathbb{N}$,  let $\prt_N \subset [0,T]$ be the equally-spaced partition of size $N$.
Suppose that $f: [0,T] \to \mathbb{R}$ has finite quadratic variation $V^2(f)$ with respect to $(\prt_N)_{N \in \mathbb{N}}$, $f(0) = 0$, and $f$ is continuous on the boundary, i.e., $\lim_{x \to 0^+} f(x) = f(0)$ and $\lim_{x \to T^-} f(x) = f(T)$.
Moreover, suppose that the quadratic variation $V^2(f)$ remains the same for all sequences of quasi-uniform partitions with constant $C_\mathrm{qu}=2$.\footnote{In \Cref{sec:discussion-thm-fqv-estimator}, we discuss the relaxation of this requirement.}
Then
\begin{equation} \label{eq:sigma-cv-limit-qvar}
  \lim_{N \to \infty} \sigmasqcvest = \frac{V^2(f)}{T}.
\end{equation}
\end{theorem}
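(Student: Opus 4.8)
The plan is to reduce $\sigmasqcvest$ to an explicit sum over the increments $\delta_n \coloneqq f(x_n) - f(x_{n-1})$ of $f$ along the grid and then identify the limit of that sum; throughout, write $\Delta = T/N$ for the common spacing. First I would evaluate the leave-one-out quantities using the explicit Brownian-motion posterior formulas (\eqref{eq:explicit-post-mean}--\eqref{eq:explicit-post-cov}) together with the Markov property. For an interior index $2 \le n \le N-1$, dropping $x_n$ leaves only the neighbours $x_{n-1}, x_{n+1}$ relevant, so $m_{\setminus n}(x_n)$ is their linear interpolant and $k_{\setminus n}(x_n)$ is the Brownian-bridge variance, equal to $\Delta/2$ on the uniform grid; the residual is $f(x_n) - \tfrac12(f(x_{n-1}) + f(x_{n+1})) = \tfrac12(\delta_n - \delta_{n+1})$. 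The left endpoint $n = 1$ behaves identically once one notes that the Brownian prior pins $f$ at $0$, so the virtual node $x_0 = 0$ with $f(0)=0$ plays the role of the missing left neighbour, again giving residual $\tfrac12(\delta_1-\delta_2)$ and variance $\Delta/2$. The right endpoint $n=N$ has no right neighbour, so $m_{\setminus N}(T) = f(x_{N-1})$ and $k_{\setminus N}(T) = \Delta$. Collecting terms yields the closed form
\[
\sigmasqcvest = \frac{1}{2T} \sum_{n=1}^{N-1} (\delta_n - \delta_{n+1})^2 + \frac{\delta_N^2}{T}.
\]

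Next I would apply the elementary identity $(\delta_n - \delta_{n+1})^2 = 2\delta_n^2 + 2\delta_{n+1}^2 - (\delta_n + \delta_{n+1})^2$, giving
\[
\sum_{n=1}^{N-1} (\delta_n - \delta_{n+1})^2 = 2 \sum_{n=1}^{N-1} \delta_n^2 + 2 \sum_{n=2}^{N} \delta_n^2 - \sum_{n=1}^{N-1} (\delta_n + \delta_{n+1})^2 .
\]
By the definition of quadratic variation $\sum_{n=1}^N \delta_n^2 \to V^2(f)$, while the boundary-continuity hypotheses give $\delta_1^2 \to 0$ and $\delta_N^2 \to 0$; hence each of the first two sums tends to $V^2(f)$ and the leftover $\delta_N^2/T$ in the closed form vanishes.

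The crux is the cross term $\sum_{n=1}^{N-1}(\delta_n + \delta_{n+1})^2$, since $\delta_n + \delta_{n+1} = f(x_{n+1}) - f(x_{n-1})$ is the increment of $f$ over the merged interval $[x_{n-1}, x_{n+1}]$. Splitting by the parity of $n$, the odd-$n$ terms are the squared increments over the coarsened grid $\{x_0, x_2, x_4, \dots\}$ and the even-$n$ terms those over $\{x_1, x_3, x_5, \dots\}$, each a partition of mesh $2\Delta \to 0$. Adjoining, where needed, the two boundary intervals of length $\Delta$ turns each into a genuine partition of $[0,T]$ whose spacings lie in $\{\Delta, 2\Delta\}$, i.e.\ a sequence that is quasi-uniform with constant at most $2$. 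This is exactly the regime covered by the invariance hypothesis on $V^2(f)$, so both completed quadratic variations converge to $V^2(f)$; since the adjoined boundary squares are $\delta_1^2$ and $\delta_N^2$ (both vanishing), each parity sum also tends to $V^2(f)$, whence $\sum_{n=1}^{N-1}(\delta_n + \delta_{n+1})^2 \to 2V^2(f)$.

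Substituting back gives $\sum_{n=1}^{N-1}(\delta_n - \delta_{n+1})^2 \to 2V^2(f) + 2V^2(f) - 2V^2(f) = 2V^2(f)$, and therefore $\sigmasqcvest \to V^2(f)/T$. The main obstacle is precisely this cross term: the whole argument hinges on recognising the boundary-completed coarsened partition and invoking invariance there, which is the unique place the hypothesis is used and explains why the quasi-uniformity constant must be (at least) $2$ rather than $1$. The only remaining bookkeeping is the parity of $N$ --- for odd $N$ the two coarsened grids swap which one needs the boundary completion, but both stay quasi-uniform with constant at most $2$, so the argument is unchanged.
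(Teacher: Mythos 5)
Your proof is correct and takes essentially the same route as the paper's: the identical closed form for $\sigmasqcvest$ on the uniform grid, the same algebraic identity $(a-b)^2 = 2a^2 + 2b^2 - (a+b)^2$ applied to consecutive increments, and the same parity split of the cross term $\sum_n \bigl(f(x_{n+1}) - f(x_{n-1})\bigr)^2$ into coarsened sub-partitions that are quasi-uniform with constant at most $2$, at which point the invariance hypothesis is invoked. Your explicit completion of the coarsened grids by the boundary intervals, with the adjoined squares $\delta_1^2$ and $\delta_N^2$ vanishing by the boundary-continuity assumption, is if anything slightly more careful bookkeeping than the paper's treatment of the same step.
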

\begin{proof}
See \Cref{sec:proofs-deterministic}.
\end{proof}

For the ML estimator $\sigmasqmlest$, it is straightforward to obtain a similar result by using \eqref{eq:quasi-uniformity-2} and \eqref{eq:sigma-ml-w-deltas} in Section \ref{sec:explicit-post-mean-cov}: Under the same conditions as \Cref{res:fqv-estimator}, we have
\begin{equation} \label{eq:ml-quad-var}
  \lim_{N \to \infty} \sigmasqmlest =\frac{V^2(f)}{T}.
\end{equation}

\Cref{res:fqv-estimator} and \eqref{eq:ml-quad-var} are consistent with \Cref{res:holder-spaces,res:holder-spaces-ml}, which assume $f \in C^{l, \alpha}([0,T])$ with $l + \alpha > 1/2$ and imply $\sigmasqcvest \to 0$ and $\sigmasqmlest \to 0$ as $N \to \infty$. As summarised in~\Cref{res:qv_of_smooth_functions}, we have $V(f) = 0$ for $f \in C^{l, \alpha}([0,T])$ with $l + \alpha > 1/2$, so \Cref{res:fqv-estimator} and \eqref{eq:ml-quad-var} imply that $\sigmasqcvest \to 0$ and $\sigmasqmlest \to 0$  as $N \to \infty$.

When $f$ is a Brownian motion, in which case the Brownian motion prior is well-specified, the smoothness of $f$ is $l + \alpha = 1/2$, and the quadratic variation $V(f)$ becomes a positive constant ~\citep{10.2307/2959347}. \Cref{prop:bm-almost-sure} in the next subsection shows that this fact,  \Cref{res:fqv-estimator}, and \eqref{eq:ml-quad-var} lead to the consistency of the ML and CV estimators in the well-specified setting.

\subsection{Random setting} \label{sec:random-setting}

In \Cref{sec:results-deterministic}, we obtained asymptotic upper bounds on the CV and ML scale estimators when the true function $f$ is a fixed function in a H\"older space. This section shows that these asymptotic bounds are tight in expectation when $f$ is a fractional (or integrated fractional) Brownian motion.

That is, we consider the asymptotics of the expectations $\E \sigmasqcvest$ and $\E \sigmasqmlest$ under the assumption that $f \sim \GP(0, k_{l, H})$, where $k_{l, H}$ is the kernel of a fractional Brownian motion \eqref{eq:fbm-def} for $l = 0$ or that of an integrated fractional Brownian motion \eqref{eq:iFBM-kernel-explicit} for $l = 1$, with $0 < H <1$ being the Hurst parameter. Recall that $f \sim \GP(0, k_{l, H})$ belongs to the H\"older space $C^{l, H - \varepsilon}([0,T])$ almost surely for arbitrarily small $\varepsilon > 0$, so its smoothness is $l + H$.  Figure \ref{fig:rate-regimes-intro} summarises the obtained upper and lower rates, corroborating the upper rates in Figure \ref{fig:rate-regimes-intro-deterministic}.

\Cref{res:holder-spaces-exp,res:holder-spaces-exp-ml} below establish the asymptotic upper and lower bounds for the CV and ML estimators, respectively.

\begin{theorem}[Expected CV rate for fractional Brownian motion]
\label{res:holder-spaces-exp}
    Suppose that $(\prt_N)_{N \in \mathbb{N}}$ are quasi-uniform and $f \sim \GP(0, k_{l,H})$ with $l \in \{0, 1\}$ and $0 < H < 1$.
    Then
    \begin{equation*}
      \E \sigmasqcvest = \Theta ( N^{1 - \min\{2(l + H),3\}} ) =
      \begin{cases}
        \Theta\left(N^{1 - 2  H}\right) &\text{ if } \quad l = 0 \text{ and } H \in (0, 1), \\
        \Theta\left(N^{-1 - 2H}\right) &\text{ if } \quad  l = 1 \text{ and } H < 1/2, \\
        \Theta\left(N^{- 2}\right) &\text{ if } \quad l = 1 \text{ and } H \geq 1/2. \\
      \end{cases}
    \end{equation*}
\end{theorem}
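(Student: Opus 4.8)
The plan is to exploit the fact that for the Brownian motion kernel the leave-one-out posterior variance $k_{\setminus n}(x_n)$ is \emph{deterministic}, depending only on the design points and not on $f$, whereas the random process $f \sim \GP(0, k_{l,H})$ enters \eqref{eq:sigma-cv} only through the numerators. Writing $R_n \coloneqq f(x_n) - m_{\setminus n}(x_n)$, each $R_n$ is a zero-mean Gaussian variable, so expectations pass through the sum with no cancellation:
\[
  \E\,\sigmasqcvest = \frac{1}{N}\sum_{n=1}^N \frac{\E[R_n^2]}{k_{\setminus n}(x_n)}.
\]
This reduces the entire problem to estimating the deterministic denominators $k_{\setminus n}(x_n)$ and the variances $\E[R_n^2]=\mathrm{Var}(R_n)$; because every summand is non-negative, obtaining matching upper and lower bounds amounts to controlling each term, which simultaneously yields $\Theta$ rather than only $\bigo$ (so the direct route supersedes appealing to \Cref{res:holder-spaces} for the upper bound).

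First I would use the Markov property of Brownian motion to get closed forms. For an interior index the Brownian posterior depends only on the neighbours $x_{n-1},x_{n+1}$, so $m_{\setminus n}(x_n)$ is the linear interpolant of $f(x_{n-1}),f(x_{n+1})$ at $x_n$ and $k_{\setminus n}(x_n)=(x_n-x_{n-1})(x_{n+1}-x_n)/(x_{n+1}-x_{n-1})$ is the Brownian-bridge variance; the endpoint $n=1$ is identical using the implicit node $x_0=0$ with $f(0)=0$ almost surely, while for $n=N$ there is no right neighbour and $m_{\setminus N}(x_N)=f(x_{N-1})$, $k_{\setminus N}(x_N)=x_N-x_{N-1}$. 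Quasi-uniformity and \eqref{eq:quasi-uniformity-2} then give $k_{\setminus n}(x_n)=\Theta(N^{-1})$ uniformly in $n$.

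The crux is the variance computation. For interior $n$, writing $R_n=f(x_n)-p\,f(x_{n-1})-q\,f(x_{n+1})$ with $p+q=1$, the coefficient vector $(-p,1,-q)$ sums to zero; expanding $\mathrm{Var}(R_n)$ with $k_{0,H}$ from \eqref{eq:fbm-def} (or, for $l=1$, writing $R_n=\int w(z)\,g(z)\,\mathrm{d}z$ against the driving fractional Brownian motion $g=f'$ with a weight $w$ that also integrates to zero), the $z^{2H}$ and $(z')^{2H}$ contributions cancel precisely because the coefficients sum to zero, leaving only $-\tfrac12\sum_{i,j}c_ic_j\lvert t_i-t_j\rvert^{2H}$. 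Rescaling $z\mapsto z/\|\prt_N\|$ shows $\E[R_n^2]=\Theta(N^{-2H})$ for $l=0$ and $\E[R_n^2]=\Theta(N^{-(2H+2)})$ for $l=1$, the two extra powers of $N^{-1}$ coming from the double integration of the residual against $g$. The matching $\Theta$ follows from strict positivity (non-degeneracy) of these Gaussian functionals, with constants uniform in $n$ by compactness of the admissible gap ratios. Dividing by $k_{\setminus n}(x_n)=\Theta(N^{-1})$ and averaging gives $\Theta(N^{1-2H})$ for $l=0$ and an interior contribution $\Theta(N^{-(2H+1)})$ for $l=1$.

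The main obstacle, and the origin of the two-regime behaviour for $l=1$, is the right boundary index $n=N$: since $R_N=f(x_N)-f(x_{N-1})$ has coefficient vector $(-1,1)$ that does \emph{not} sum to zero, the cancellation fails and $\E[R_N^2]=\mathrm{Var}(f(x_N)-f(x_{N-1}))$ retains the non-vanishing constant part of the kernel. For the integrated fractional Brownian motion this equals $\int\!\!\int_{[x_{N-1},x_N]^2}k_{0,H}(z,z')\,\mathrm{d}z\,\mathrm{d}z'=\Theta(N^{-2})$ instead of $\Theta(N^{-(2H+2)})$, so after dividing by $k_{\setminus N}(x_N)=\Theta(N^{-1})$ and the global $1/N$ this single term contributes $\Theta(N^{-2})$ to $\E\,\sigmasqcvest$ independently of $H$; the endpoint $n=1$ keeps the cancellation because the implicit node at $0$ supplies a left neighbour. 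The stated rate is therefore the maximum of the interior rate $N^{-(2H+1)}$ and the boundary rate $N^{-2}$, which cross over exactly at $H=1/2$ and reproduce the three cases. I would finish by verifying that all $\Theta$ constants are uniform in $n$, so that summing the non-negative terms loses neither the interior nor the boundary contribution.
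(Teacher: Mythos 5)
Your proposal is correct and follows essentially the same route as the paper's proof: the Markov property of the Brownian motion kernel reduces $\E\,\sigmasqcvest$ to a deterministic sum of per-term ratios (the paper's explicit formula \eqref{eq:sigma-cv-w-deltas}), quasi-uniformity gives $k_{\setminus n}(x_n) = \Theta(N^{-1})$ uniformly, the interior terms contribute $\Theta(N^{1-2H})$ for $l=0$ and $\Theta(N^{-1-2H})$ for $l=1$, and the right-boundary term $\E[f_N - f_{N-1}]^2 = \Theta(N^{-2})$ drives the crossover at $H = 1/2$ --- exactly the paper's decomposition into $B_{1,N} + I_N + B_{2,N}$, with your self-similarity/rescaling argument and uniformity over gap ratios in $[C_\textup{qu}^{-1}, C_\textup{qu}]$ standing in for the paper's closed-form expectations such as \eqref{eq:l=1-exp-2}. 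One wording slip worth fixing: the coefficient vector $(-1,1)$ \emph{does} sum to zero, so constants are annihilated even at $n = N$; what fails there for $l=1$ is the first-moment condition $x_N - x_{N-1} = 0$ --- equivalently, your weight $w = \mathbf{1}_{[x_{N-1}, x_N]}$ against the driving fractional Brownian motion does not integrate to zero --- which is precisely the mechanism your subsequent (correct) double-integral computation $\int\!\!\int_{[x_{N-1},x_N]^2} k_{0,H}(z,z')\,\mathrm{d}z\,\mathrm{d}z' = \Theta(N^{-2})$ exploits, so the conclusion is unaffected.
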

\begin{proof}
  See Section~\ref{sec:proofs-random}.
\end{proof}

\begin{theorem}[Expected ML rate for fractional Brownian motion]
\label{res:holder-spaces-exp-ml}
      Suppose that $(\prt_N)_{N \in \mathbb{N}}$ are quasi-uniform and $f \sim \GP(0, k_{l,H})$ with $l \in \{0, 1\}$ and $0 < H < 1$.
    Then
    \begin{equation*}
      \E \hat{\sigma}_\textup{ML}^2 = \Theta ( N^{1 - \min\{2(l + H),2\}} ) =
      \begin{cases}
        \Theta\left(N^{1 - 2  H}\right) &\text{ if } \quad l = 0 \text{ and } H \in (0, 1), \\
        \Theta\left(N^{-1}\right) &\text{ if } \quad  l = 1 \text{ and } H \in (0, 1).
      \end{cases}
    \end{equation*}
\end{theorem}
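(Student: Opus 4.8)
The plan is to start from the sequential form of the ML estimator in \eqref{eq:sigma-ml},
\[
  \sigmasqmlest = \frac{1}{N}\sum_{n=1}^N \frac{[f(x_n) - m_{n-1}(x_n)]^2}{k_{n-1}(x_n)},
\]
and to exploit the Markov property of the Brownian motion \emph{regression} kernel to collapse each summand into a squared increment of $f$. Since the design points are ordered and $m_{n-1},k_{n-1}$ are built only from the leftmost $n-1$ observations, the evaluation point $x_n$ lies to the right of all of them, so that $\min(x_n,x_j)=x_j$ for $j\le n-1$. Using the explicit posterior formulas \eqref{eq:explicit-post-mean} and \eqref{eq:explicit-post-cov}, I would show $m_{n-1}(x_n)=f(x_{n-1})$ (constant extrapolation beyond the rightmost observation) and $k_{n-1}(x_n)=x_n-x_{n-1}$, with the convention $x_0:=0$. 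Taking expectations then reduces the whole problem to the increment variances of the driving process:
\[
  \E\sigmasqmlest = \frac{1}{N}\sum_{n=1}^N \frac{\E\big[(f(x_n)-f(x_{n-1}))^2\big]}{x_n - x_{n-1}}.
\]

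For $l=0$ the fractional Brownian motion kernel \eqref{eq:fbm-def} gives $\E[(f(x_n)-f(x_{n-1}))^2]=(x_n-x_{n-1})^{2H}$, so each summand equals $(x_n-x_{n-1})^{2H-1}$. The two-sided spacing bounds of \eqref{eq:quasi-uniformity-2}, which pin every gap to $\Theta(N^{-1})$, then yield $\E\sigmasqmlest=\tfrac1N\sum_{n=1}^N\Theta(N^{-(2H-1)})=\Theta(N^{1-2H})$; the monotonicity of $t\mapsto t^{2H-1}$ switches direction between $H<1/2$ and $H>1/2$, but the two-sided bound delivers both the upper and the matching lower constant in every case (with $H=1/2$ giving summands identically one).

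For $l=1$ I would write $f(x_n)-f(x_{n-1})=\int_{x_{n-1}}^{x_n} g(z)\,\mathrm{d}z$ with $g\sim\GP(0,k_{0,H})$ and compute the increment variance as the double integral $\int_{x_{n-1}}^{x_n}\!\int_{x_{n-1}}^{x_n} k_{0,H}(z,z')\,\mathrm{d}z\,\mathrm{d}z'$ (equivalently, evaluate $k_{1,H}$ from \eqref{eq:iFBM-kernel-explicit} at the three needed arguments). Splitting $k_{0,H}$ into its $\tfrac12(z^{2H}+z'^{2H})$ part and its $-\tfrac12|z-z'|^{2H}$ part, the first contributes $(x_n-x_{n-1})\int_{x_{n-1}}^{x_n} z^{2H}\,\mathrm{d}z$, and the second, via the substitution $z=x_{n-1}+\delta_n s$ with $\delta_n:=x_n-x_{n-1}$, contributes $-\tfrac12 c_H\,\delta_n^{2H+2}$ for the positive constant $c_H:=\int_0^1\!\int_0^1|s-t|^{2H}\,\mathrm{d}s\,\mathrm{d}t$. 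Dividing by $\delta_n$ and summing, the first piece telescopes to the fixed positive constant $\int_0^T z^{2H}\,\mathrm{d}z=T^{2H+1}/(2H+1)$, while the correction $\tfrac12 c_H\sum_n\delta_n^{2H+1}$ is $\Theta(N^{-2H})=o(1)$ by \eqref{eq:quasi-uniformity-2}. Hence the bracketed quantity converges to a strictly positive constant and $\E\sigmasqmlest=\Theta(N^{-1})$.

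I expect the main obstacle to be the $l=1$ increment-variance estimate: one must isolate the leading telescoping term and show that the $|z-z'|^{2H}$ contribution is genuinely lower order uniformly in $n$, including the boundary cell at $x_0=0$ where the weight $z^{2H}$ degenerates. The two-sided spacing control from quasi-uniformity is precisely what makes both the upper and the \emph{matching} lower bound go through; with only a mesh bound $\|\prt_N\|=\bigo(N^{-1})$ the lower bound would fail, which is why the hypothesis is strengthened to quasi-uniformity here. The remaining ingredients—the Markov reduction and the $l=0$ sum—are then routine.
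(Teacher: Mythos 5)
Your proposal is correct and follows essentially the same route as the paper: reduce via $m_{n-1}(x_n)=f(x_{n-1})$ and $k_{n-1}(x_n)=\Delta x_{n-1}$ to $\E \sigmasqmlest = \frac{1}{N}\sum_{n=1}^N \E[(f_n-f_{n-1})^2]/\Delta x_{n-1}$, compute the increment variances (your double-integral split recovers exactly the paper's formula $\frac{\Delta x_{n-1}}{2H+1}\bigl( x_n^{2H+1}-x_{n-1}^{2H+1} - \frac{1}{2(H+1)}\Delta x_{n-1}^{2H+1}\bigr)$, since $c_H = \frac{1}{(H+1)(2H+1)}$), and invoke quasi-uniformity for the matching two-sided bounds. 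The only deviation, harmless and arguably cleaner, is in the $l=1$ case, where you telescope the exact sum $\sum_{n} \int_{x_{n-1}}^{x_n} z^{2H}\,\mathrm{d}z = T^{2H+1}/(2H+1)$ (which handles the boundary cell at $x_0=0$ exactly), whereas the paper applies the generalised binomial expansion termwise; both show $N\,\E\sigmasqmlest$ tends to a positive constant, hence $\Theta(N^{-1})$.
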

\begin{proof}
  See Section~\ref{sec:proofs-random}. The proof is similar to that of \Cref{res:holder-spaces-exp}.
\end{proof}

\Cref{res:holder-spaces-exp,res:holder-spaces-exp-ml} show that the CV estimator is adaptive to the unknown smoothness $l + H$ of the function $f$ for a broader range $0< l+H \leq 3/2$ than the ML estimator, whose range of adaptation is $0 < l+H \leq 1$. These results imply that the CV estimator can be asymptotically well-calibrated for a broader range of unknown smoothness than the ML estimator, as discussed in \Cref{sec:discussion}.

When the smoothness of $f$ is less than $1/2$, i.e., when $l + H < 1/2$, the Brownian motion prior, whose smoothness is $1/2$, is smoother than $f$. In this case, the expected rates of $\sigmasqcvest$ and $\sigmasqmlest$ are $ \Theta\left(N^{1 - 2  H}\right)$ and increase as $N$ increases. The increase of $\sigmasqcvest$ and $\sigmasqmlest$ can be interpreted as compensating the overconfidence of the posterior standard deviation $\sqrt{\smash[b]{k_N(x)}}$, which decays too fast to be asymptotically well-calibrated. This interpretation agrees with the illustration in Figure \ref{fig:FBM-H02}.

On the other hand, when $l+ H > 1/2$, the function $f$ is smoother than the Brownian motion prior. In this case,  $ \sigmasqcvest$ and $\sigmasqmlest$ decrease as $N$ increases, compensating the under-confidence of the posterior standard deviation $\sqrt{\smash[b]{k_N(x)}}$. See Figure \ref{fig:IFBM-H05} for an illustration.

When $l + H = 1/2$, this is the well-specified case in that the smoothness of $f$ matches the Brownian motion prior. In this case,  \Cref{res:holder-spaces-exp,res:holder-spaces-exp-ml} yield $\E \sigmasqcvest = \Theta(1)$ and $\E \sigmasqmlest = \Theta(1)$, i.e., when the CV and ML estimators converge, they converge to a positive constant.
The following proposition, which follows from \Cref{res:fqv-estimator} and \eqref{eq:ml-quad-var}, shows that this limiting constant is the true value of the scale parameter $\sigma_0^2$ in the well-specified setting $f \sim \GP(0, \sigma_0^2 k)$, recovering similar results in the literature \citep[e.g.,][Theorem 2]{Bachoc2017}.

\begin{proposition} \label{prop:bm-almost-sure}
  Suppose that $f \sim \GP(0, \sigma_0^2 k)$ for $\sigma_0 > 0$ and that partitions  $(\prt_N)_{N \in \mathbb{N}}$ are equally-spaced. Then
  \begin{equation*}
    \lim_{N \to \infty} \sigmasqcvest = \lim_{N \to \infty} \sigmasqmlest = \sigma_0^2 \quad \text{ almost surely}.
  \end{equation*}
\end{proposition}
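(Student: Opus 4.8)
The plan is to reduce the whole statement to the quadratic variation of Brownian motion and then read off both limits from the deterministic results already established. Since $k(x,x') = \min(x,x')$ is the covariance of a standard Brownian motion $W$, the hypothesis $f \sim \GP(0, \sigma_0^2 k)$ means that $f$ equals $\sigma_0 W$ as a process. Brownian paths satisfy $W_0 = 0$ and are continuous on $[0,T]$ on a single almost-sure event, so $f(0) = 0$ and the boundary-continuity requirement of \Cref{res:fqv-estimator} hold almost surely. Consequently, \Cref{res:fqv-estimator} identifies $\lim_N \sigmasqcvest$ with $V^2(f)/T$ and equation~\eqref{eq:ml-quad-var} identifies $\lim_N \sigmasqmlest$ with $V^2(f)/T$, so it suffices to show that $V^2(f) = \sigma_0^2 T$ almost surely, in the strong sense the theorem demands.

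First I would treat the equally-spaced sequence itself. Here $\|\prt_N\| = T/N = o(1/\log N)$, so the result cited after \Cref{res:qv_of_smooth_functions}~\citep{10.2307/2959347} gives $\sum_n [W(x_{N,n+1}) - W(x_{N,n})]^2 \to T$ almost surely. The increment sum scales linearly under $f = \sigma_0 W$, namely $\sum_n [f(x_{N,n+1}) - f(x_{N,n})]^2 = \sigma_0^2 \sum_n [W(x_{N,n+1}) - W(x_{N,n})]^2$, so $V^2(f) = \sigma_0^2 T$ almost surely along the equally-spaced partitions. For the ML estimator this is in fact already enough: the explicit formula behind~\eqref{eq:ml-quad-var} reduces $\sigmasqmlest$ to the equally-spaced quadratic-variation sum divided by $T$, so $\lim_N \sigmasqmlest = \sigma_0^2$ follows directly on this single full-measure event, without needing the invariance hypothesis.

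The main obstacle is the additional requirement of \Cref{res:fqv-estimator} that $V^2(f)$ take the \emph{same} value along every sequence of quasi-uniform partitions with constant $C_\textup{qu} = 2$. Any such sequence has mesh $\max_n \Delta x_{N,n} \leq 2T N^{-1}$ by~\eqref{eq:quasi-uniformity-2} with $C_\textup{qu} = 2$, hence $\|\prt_N\| = o(1/\log N)$ again, and the cited result gives $V^2(f) = \sigma_0^2 T$ almost surely for each \emph{fixed} such sequence. The delicate point is that there are uncountably many admissible sequences while the cited theorem only controls each on its own full-measure event, and the intersection of uncountably many such events need not have full measure. I would resolve this by producing a single almost-sure event on which the quadratic variation equals $\sigma_0^2 T$ simultaneously along all quasi-uniform sequences with $C_\textup{qu} = 2$, via one of two routes: (i) upgrade the Dudley-type statement to a uniform-in-partition almost-sure convergence over the tightly constrained quasi-uniform class (roughly $N$ points with successive-gap ratios at most $2$), using that the increment sum is a smooth function of the partition nodes together with a covering/chaining argument; or (ii) observe that the proof of \Cref{res:fqv-estimator} only invokes finitely many auxiliary partitions for each $N$—the leave-one-out modifications of the equally-spaced grid, each of which doubles exactly one subinterval—so that a countable union of null sets suffices and the relaxation discussed in \Cref{sec:discussion-thm-fqv-estimator} discharges the hypothesis as literally stated.

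Once $V^2(f) = \sigma_0^2 T$ is established on a single almost-sure event, the conclusion is immediate: \Cref{res:fqv-estimator} gives $\lim_N \sigmasqcvest = V^2(f)/T = \sigma_0^2$ and~\eqref{eq:ml-quad-var} gives $\lim_N \sigmasqmlest = V^2(f)/T = \sigma_0^2$, both almost surely. I expect all the genuine difficulty to sit in the uniformity/measurability step of the previous paragraph; the remainder is the elementary scaling identity combined with the deterministic limits, which are quoted directly.
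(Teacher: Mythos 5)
Your proposal is correct and follows the same basic route as the paper's one-line proof: identify $V^2(f) = \sigma_0^2 T$ almost surely via the quadratic-variation theorem for Brownian motion along fixed partition sequences with mesh $o(1/\log N)$ \citep{10.2307/2959347}, then read off both limits from \Cref{res:fqv-estimator} (i.e.\ \eqref{eq:sigma-cv-limit-qvar}) and \eqref{eq:ml-quad-var}, using the scaling $V^2(\sigma_0 W) = \sigma_0^2 V^2(W)$ and the almost-sure validity of $f(0)=0$ and boundary continuity. Where you go beyond the paper is in taking the invariance hypothesis of \Cref{res:fqv-estimator} seriously: as you observe, the cited theorem controls each \emph{fixed} sequence on its own full-measure event, while the class of quasi-uniform sequences with $C_\textup{qu} = 2$ is uncountable --- a subtlety the paper's proof silently elides. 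Your route (ii) is the correct repair and is exactly what \Cref{sec:discussion-thm-fqv-estimator} supplies: the proof of \Cref{res:fqv-estimator} invokes the quadratic variation only along two specific sub-partition sequences, each fixed with mesh $2T/N = o(1/\log N)$, so two null sets suffice. Two caveats. First, your parenthetical description of those auxiliary partitions is inaccurate: they are the even- and odd-indexed \emph{decimated} sub-grids $x_2, x_4, \dots$ and $x_1, x_3, \dots$ (every gap doubled), not leave-one-out grids that double a single subinterval; this is harmless, since the appendix relaxation concerns precisely the decimated sequences and your countability argument applies to them verbatim (modulo a trivial endpoint adjustment, as the odd sub-grid spans $[x_1, x_{2\lfloor (N-1)/2 \rfloor + 1}]$ rather than $[0,T]$, handled by path continuity). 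Second, your fallback route (i) should be treated with suspicion: a uniform-in-partition almost-sure statement over the entire quasi-uniform class is at best delicate and plausibly false, since partitions chosen adaptively to the path can bias the increment sum upward --- recall that the classical $2$-variation of Brownian motion, defined as a supremum over partitions, is almost surely infinite --- so it is fortunate that your argument does not actually rely on (i). Your direct treatment of the ML estimator, noting from \eqref{eq:sigma-ml-w-deltas} that for equally spaced points $\sigmasqmlest = T^{-1} \sum_{n=1}^N (f_n - f_{n-1})^2$ so that only the single equally-spaced sequence is needed, is also correct and matches the paper's use of \eqref{eq:ml-quad-var}.
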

\begin{proof}
  Since the quadratic variation of almost all sample paths of the unscaled (i.e., $\sigma_0 = 1$) Brownian motion on $[0, T]$ equals $T$~\citep{10.2307/2959347}, the claim follows from~\eqref{eq:sigma-cv-limit-qvar} and~\eqref{eq:ml-quad-var}.
\end{proof}

In~\Cref{sec:discussion}, we discuss the implications of the obtained asymptotic rates of $\sigmasqcvest$ and $\sigmasqmlest$ on the reliability of the resulting GP uncertainty estimates. But first, motivated by the results in~\Cref{res:holder-spaces} and~\Cref{res:holder-spaces-exp}, we propose a modification to the cross-validation procedure that may have better asymptotic properties than the CV estimator.

\begin{figure}
    \centering
    \includegraphics[width=0.8\textwidth]{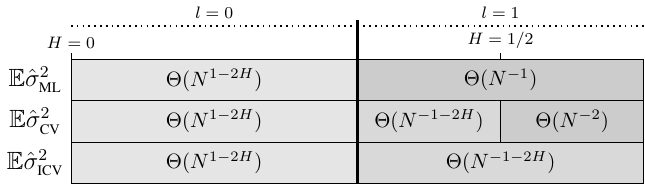}
    \caption{Expected decay rates for the ML, CV and ICV estimators from \Cref{res:holder-spaces-exp,res:holder-spaces-exp-ml,res:holder-spaces-icv-exp}.
    Observe that the CV estimator's range of adaptation to the smoothness $l+H$ is wider than the ML estimator's, and the ICV estimator's range of adaptation is wider than that for both the CV and ML estimators.
    }
    \label{fig:rate-regimes-intro}
\end{figure}

\subsection{Interior cross-validation estimators}
\label{sec:icv-estimators}

The proofs of~\Cref{res:holder-spaces,res:holder-spaces-exp} show that when $l=1$ and $\alpha \in (1/2, 1]$, the bound on $\sigmasqcvest$ is dominated by the bound on what we call the boundary terms.
These are the terms corresponding to $n = 1$ and $n = N$ in~\eqref{eq:sigma-cv}; see also~\eqref{eq:boundary-terms}.
That the boundary terms dominate is unsurprising since prediction at boundary points is a more challenging task than prediction at the interior. Motivated by this observation, we propose an alternative estimation method called \emph{interior cross validation} (ICV) that maximises
\begin{equation*}
\label{eq:icv-estimator-any-kernel}
  \sum_{n=2}^{N-1} \log p( f(x_n) \given x_n, \bx_{\setminus n}, f(\bx_{\setminus n}), \theta) .
\end{equation*}
The corresponding scale parameter estimator is
\begin{equation}
\label{eq:icv-estimator}
    \sigmasqicvest = \frac{1}{N} \sum_{n=2}^{N-1}  \frac{\left[f(x_n) - m_{\setminus  n}(x_n)\right]^2}{ k_{\setminus  n}(x_n)}.
\end{equation}
With the boundary points removed, the estimator's range of adaptation to the smoothness of the true function is greater than that for the CV estimator, as illustrated in~\Cref{fig:rate-regimes-intro-deterministic} for the deterministic setting and~\Cref{fig:rate-regimes-intro} for the random setting. We present formal results for the deterministic and the random settings in the following theorems.

\begin{theorem}[Rate of ICV decay in H\"older spaces]
\label{res:holder-spaces-icv}
Suppose that $f$ is an element of $C^{l, \alpha}([0, T])$, with $l \geq 0$ and $0 < \alpha \leq 1$, such that $f(0)=0$, and the interval partitions $(\prt_N)_{N \in \mathbb{N}}$ have bounded mesh sizes $\|\prt_N \|=\bigo(N^{-1})$ as $N \to \infty$. Then
\begin{equation*}
\sigmasqicvest = \bigo\big( N^{1 - \min\{2(l + \alpha), 4\}} \big)
= \begin{cases}
    \bigo\left(N^{1 - 2 \alpha}\right) &\text{ if } \quad l = 0, \\
    \bigo\left(N^{-1 - 2\alpha}\right) &\text{ if } \quad  l = 1, \\
    \bigo\left(N^{- 3}\right) &\text{ if } \quad l \geq 2.
    \end{cases}
\end{equation*}
\end{theorem}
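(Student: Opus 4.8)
The plan is to use that $\sigmasqicvest$ is precisely the CV estimator of~\eqref{eq:sigma-cv} with the two boundary summands ($n=1$ and $n=N$) deleted, so that every term that remains corresponds to a genuine interior point having both neighbours present. First I would record the explicit form of an interior summand. Since Brownian motion is Markov, deleting an interior point $x_n$ (with $2 \le n \le N-1$) leaves a posterior at $x_n$ that depends only on the two bracketing points $x_{n-1}$ and $x_{n+1}$: the posterior mean $m_{\setminus n}(x_n)$ is the linear interpolant of $(x_{n-1},f(x_{n-1}))$ and $(x_{n+1},f(x_{n+1}))$, and the posterior variance is the Brownian-bridge variance $k_{\setminus n}(x_n) = \Delta_-\Delta_+/(\Delta_-+\Delta_+)$, where $\Delta_- \coloneqq x_n-x_{n-1}$ and $\Delta_+ \coloneqq x_{n+1}-x_n$; both follow from the explicit expressions in~\Cref{sec:explicit-post-mean-cov}. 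Writing $a_n \coloneqq f(x_n)-f(x_{n-1})$ and $b_n \coloneqq f(x_{n+1})-f(x_n)$, each interior summand then simplifies to $(\Delta_+ a_n-\Delta_- b_n)^2/[\Delta_-\Delta_+(\Delta_-+\Delta_+)]$.

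The core of the proof is a case analysis that bounds this quantity uniformly in $n$, mirroring the argument behind~\Cref{res:holder-spaces}. For $l=0$ the $\alpha$-Hölder estimates $|a_n| \le L\Delta_-^\alpha$ and $|b_n| \le L\Delta_+^\alpha$ produce a summand of order $\Delta_-^{2\alpha-1}+\Delta_+^{2\alpha-1}$. For $l=1$ I would first apply the mean value theorem to write $a_n = f'(\xi_-)\Delta_-$ and $b_n = f'(\xi_+)\Delta_+$, which collapses the numerator to $\Delta_-^2\Delta_+^2\,(f'(\xi_-)-f'(\xi_+))^2$; the $\alpha$-Hölder continuity of $f'$ then gives $|f'(\xi_-)-f'(\xi_+)| \le L(\Delta_-+\Delta_+)^\alpha$, so the summand is $\bigo(\Delta_-\Delta_+(\Delta_-+\Delta_+)^{2\alpha-1}) = \bigo(N^{-1-2\alpha})$ uniformly. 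For $l \ge 2$ the same mean-value step followed by $|f'(\xi_-)-f'(\xi_+)| \le \|f''\|_\infty(\Delta_-+\Delta_+)$ yields $\bigo(\Delta_-\Delta_+(\Delta_-+\Delta_+)) = \bigo(N^{-3})$; importantly, the linear-interpolation error already saturates at the $C^2$ level, so no further decay is gained for smoother $f$, which is exactly the origin of the cap $\min\{2(l+\alpha),4\}$. Summing the $N-2$ interior terms and dividing by $N$ then gives $\bigo(N^{1-\min\{2(l+\alpha),4\}})$ in each regime, as claimed.

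It is worth isolating why this improves on~\Cref{res:holder-spaces}. The deleted summand at $n=N$ equals $(f(x_N)-f(x_{N-1}))^2/(x_N-x_{N-1})$, since there the posterior mean is simply $f(x_{N-1})$ by the martingale property and the variance is $x_N-x_{N-1}$. For $l \ge 1$ this is of order $(f'(x_N))^2(x_N-x_{N-1}) = \Theta(N^{-1})$ whenever $f' \not\equiv 0$ near $T$, so after the $1/N$ prefactor it contributes $\Theta(N^{-2})$ and is precisely what pins the CV rate at $N^{-2}$ once $l \ge 1$. Discarding it is what allows the faster interior rates $N^{-1-2\alpha}$ (for $l=1$) and $N^{-3}$ (for $l\ge2$) to survive; the symmetric removal of $n=1$ costs nothing, as that term is itself interior-like, its left neighbour being the pinned value $f(0)=0$.

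The main obstacle is the summation in the roughest regime, $l=0$ with $\alpha<1/2$. There the per-term exponent $2\alpha-1$ is negative, so the bound $\Delta_-^{2\alpha-1}+\Delta_+^{2\alpha-1}$ blows up on vanishingly small subintervals and the mesh condition $\|\prt_N\|=\bigo(N^{-1})$ alone does not control $\sum_n \Delta_-^{2\alpha-1}$; bounding this sum by $\bigo(N^{2-2\alpha})$, so that the final rate is $\bigo(N^{1-2\alpha})$, requires the same partition regularity invoked in the proof of~\Cref{res:holder-spaces}. In the improvement regimes $l \ge 1$, by contrast, the extra factor $\Delta_-\Delta_+$ supplied by the $C^1$ smoothness eliminates any such blow-up, so the uniform per-term bounds are immediately summable and the only remaining work is the routine bookkeeping of the cap at exponent $4$.
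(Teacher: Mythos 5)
Your proof is correct and takes essentially the same route as the paper: the paper likewise identifies $\sigmasqicvest$ with the interior term $I_N$ of the decomposition \eqref{eq:sigma-cv-boundary-interior} and reuses the interior bounds from the proof of \Cref{res:holder-spaces} (H\"older estimates for $l=0$; the mean value theorem plus H\"older continuity of $f'$ for $l=1$, which holds verbatim for all $\alpha \in (0,1]$; reduction to $C^{1,1}$, equivalently your $\lVert f'' \rVert_\infty$ bound, for $l \geq 2$), the improvement over \Cref{res:holder-spaces} coming exactly from discarding the boundary terms, of which only the one at $n=N$ is rate-limiting, as you observe. Your closing caveat is also well taken: for $l=0$ and $\alpha < 1/2$ the step $\frac{1}{N}\sum_n \Delta x_n^{2\alpha-1} = \bigo(N^{1-2\alpha})$ --- in your argument and equally in the paper's own proof, which asserts it without comment --- genuinely requires subinterval lengths bounded \emph{below} by a constant multiple of $N^{-1}$ (quasi-uniformity), since the stated hypothesis $\lVert \prt_N \rVert = \bigo(N^{-1})$ only bounds them above and the exponent $2\alpha - 1$ is negative; this regularity is assumed explicitly only in the random-setting results (\Cref{res:holder-spaces-exp,res:holder-spaces-icv-exp}), so you have correctly localised an unstated assumption rather than introduced a gap of your own.
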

\begin{proof}
See Section~\ref{sec:proofs-icv}.
\end{proof}

\begin{theorem}[Expected ICV rate for fractional Brownian motion]
\label{res:holder-spaces-icv-exp}
    Suppose that $(\prt_N)_{N \in \mathbb{N}}$ are quasi-uniform and $f \sim \GP(0, k_{l,H})$ with $l \in \{0, 1\}$ and $0 < H < 1$.
    Then
    \begin{equation*}
      \E \sigmasqicvest = \Theta ( N^{1 - \min\{2(l + H),4\}} ) =
      \begin{cases}
        \Theta\left(N^{1 - 2  H}\right) &\text{ if } \quad l = 0, \\
        \Theta\left(N^{-1 - 2H}\right) &\text{ if } \quad  l = 1.
      \end{cases}
    \end{equation*}
\end{theorem}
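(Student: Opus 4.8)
The plan is to compute $\E\sigmasqicvest$ by direct summation over the interior leave-one-out terms in~\eqref{eq:icv-estimator}, reusing the per-term computations already carried out for~\Cref{res:holder-spaces-exp}. By linearity of expectation,
\[
  \E\sigmasqicvest = \frac{1}{N}\sum_{n=2}^{N-1} \E[S_n], \qquad S_n \coloneqq \frac{(f(x_n)-m_{\setminus n}(x_n))^2}{k_{\setminus n}(x_n)},
\]
so it suffices to obtain matching upper and lower bounds on each interior summand $\E[S_n]$ and then sum the $N-2 = \Theta(N)$ contributions.

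First I would invoke the explicit leave-one-out formulas for the Brownian motion kernel. For an interior index $n$, writing $h_1 = x_n - x_{n-1}$ and $h_2 = x_{n+1} - x_n$, the Markov property makes $m_{\setminus n}(x_n)$ the linear interpolant of $(x_{n-1}, f(x_{n-1}))$ and $(x_{n+1}, f(x_{n+1}))$, with Brownian-bridge variance $k_{\setminus n}(x_n) = h_1 h_2 / (h_1 + h_2)$. For $l=0$, substituting the fractional Brownian motion covariance~\eqref{eq:fbm-def} into $\E[(f(x_n)-m_{\setminus n}(x_n))^2]$ and dividing by $k_{\setminus n}(x_n)$ collapses, after the cross terms cancel, to the closed form $\E[S_n] = h_1^{2H-1} + h_2^{2H-1} - (h_1+h_2)^{2H-1}$. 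This is a positive $\Theta(N^{1-2H})$ quantity uniformly in $n$ under quasi-uniformity (for instance $(2-2^{2H-1})h^{2H-1}$ when $h_1=h_2=h$), so dividing the sum by $N$ gives $\E\sigmasqicvest = \Theta(N^{1-2H})$.

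For $l=1$ I would write $g \coloneqq f' \sim \GP(0, k_{0,H})$ and use the integral representation $f(x_n) - m_{\setminus n}(x_n) = (h_1+h_2)^{-1}(h_2 I_1 - h_1 I_2)$ with $I_1 = \int_{x_{n-1}}^{x_n}(g(t)-g(x_n))\,\mathrm{d}t$ and $I_2 = \int_{x_n}^{x_{n+1}}(g(t)-g(x_n))\,\mathrm{d}t$; centering at $g(x_n)$ cancels the leading constant contribution. Because the increments $\{g(x_n \pm u) - g(x_n)\}_{u \geq 0}$ are themselves fractional Brownian motions, the variances and cross-covariance of $I_1, I_2$ reduce to double integrals of $k_{0,H}$ over $[0,h_1]^2$, $[0,h_2]^2$ and $[0,h_1]\times[0,h_2]$, which by self-similarity $k_{0,H}(\lambda u, \lambda v) = \lambda^{2H}k_{0,H}(u,v)$ evaluate to $\Theta(h_i^{2H+2})$ with location-independent constants. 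Dividing by $k_{\setminus n}(x_n) = \Theta(N^{-1})$ yields $\E[S_n] = \Theta(N^{-1-2H})$ uniformly in $n$, hence $\E\sigmasqicvest = \Theta(N^{-1-2H})$.

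The main obstacle, and the reason the result cannot be obtained by merely subtracting the boundary terms ($n=1$ and $n=N$) of~\eqref{eq:sigma-cv} from~\Cref{res:holder-spaces-exp}, lies in the regime $l=1$, $H \geq 1/2$. There the CV rate $\Theta(N^{-2})$ is produced by the right-endpoint term $n=N$, an \emph{extrapolation} whose squared error is $\Theta(N^{-2})$ over a variance $k_{\setminus N}(x_N) = \Theta(N^{-1})$; its contribution $\Theta(N^{-2})$ to $\E\sigmasqcvest$ exactly matches the CV rate and dominates the interior rate $\Theta(N^{-1-2H})$. Subtracting it would leave an indeterminate $\Theta(N^{-2}) - \Theta(N^{-2})$ cancellation, so I must instead bound the interior sum directly and supply a matching lower bound. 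The lower bound is the delicate step: it requires that $\E[(h_2 I_1 - h_1 I_2)^2]$ stay above order $N^{-2H-4}$, which follows because the normalized covariance matrix of $(I_1, I_2)$ is, by self-similarity, a positive-definite matrix depending only on the bounded ratio $h_1/h_2$, so the quadratic form in the nonzero coefficient vector $(h_2, -h_1)$ cannot degenerate; equivalently, integrated fractional Brownian motion is almost surely non-affine on every subinterval.
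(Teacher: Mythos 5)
Your proposal is correct, and its skeleton---identifying $\E \sigmasqicvest$ with the interior part of the CV sum and proving matching two-sided bounds on each interior summand under quasi-uniformity---is exactly the mechanism behind the paper's proof. The paper, however, gets the result as a two-line corollary: its proof of \Cref{res:holder-spaces-exp} had \emph{already} established the two-sided bounds $I_N = \Theta(N^{-1-2H})$ for $l=1$ (via \eqref{eq:interior-term-exp-cv-proof}) and the interior-sum rate $\Theta(N^{1-2H})$ for $l=0$ directly, so the ICV proof simply cites these; no indeterminate $\Theta(N^{-2}) - \Theta(N^{-2})$ cancellation ever arises. Your diagnosis that the ICV rate cannot be deduced from the \emph{statement} of \Cref{res:holder-spaces-exp} alone is right, but the remedy in the paper is to reuse the interior bound from inside the CV proof rather than to rebuild it. Where you genuinely diverge is the $l=1$ per-term computation. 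The paper evaluates the exact closed form \eqref{eq:l=1-exp-2} of $\E \big[ \Delta x_{n-1} (f_{n+1} - f_n) - \Delta x_n (f_n - f_{n-1}) \big]^2$ from the iFBM kernel \eqref{eq:iFBM-kernel-explicit}, after which positivity and the two-sided bound follow from monotonicity of $x \mapsto (1+x)^{2H+1} - (1 + x^{2H+1})$ on the compact ratio range $[C_\textup{qu}^{-1}, C_\textup{qu}]$. You instead represent the residual through $g = f'$, use stationarity and $2H$-self-similarity of fBm increments to reduce the covariance structure of $(I_1, I_2)$ to location-free homogeneous integrals (one small slip: the cross term integrates $\tfrac{1}{2}\lvert u^{2H} + v^{2H} - (u+v)^{2H}\rvert$ up to sign, not $k_{0,H}$ itself, though it is indeed location-free and $2H$-homogeneous, which is all you use), and extract the lower bound from positive definiteness of the normalized covariance matrix plus compactness of the ratio range. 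Your route avoids the ``somewhat tedious'' kernel computation and would generalize more readily to other integrated processes, but it leaves the one delicate step---strict nondegeneracy of $h_2 I_1 - h_1 I_2$, equivalently nonsingularity of the iFBM Gram matrix at three distinct points---as an assertion; in the paper this positivity is immediate from the closed form, since $(h_1 + h_2)^{2H+1} - h_1^{2H+1} - h_2^{2H+1} > 0$. For $l=0$ your per-term identity $\E[S_n] = \Delta x_{n-1}^{2H-1} + \Delta x_n^{2H-1} - (\Delta x_{n-1} + \Delta x_n)^{2H-1}$ coincides exactly with the paper's computation, so that case is the same argument.
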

\begin{proof}
  See Section~\ref{sec:proofs-icv}.
\end{proof}

This idea can be taken further. For the Brownian motion kernel, an estimator that does not attempt to predict on points ``close enough'' to the boundary,
\begin{equation*}
    \sigmasqicvest[N_0] = \frac{1}{N} \sum_{n=N_0}^{N-N_0}  \frac{\left[f(x_n) - m_{\setminus  n}(x_n)\right]^2}{ k_{\setminus  n}(x_n)}
\end{equation*}
for some fixed $N_0$, has the same range of adaptation as $\sigmasqicvest= \sigmasqicvest[1]$, the estimator that only ignores the points on the boundary. However, for smoother kernels like fractional Brownian motion (iFBM) and the Mat\'ern family, $\sigmasqicvest[N_0]$ may exhibit adaptation beyond the level $l=2$. The number of boundary points $N_0$ to remove would likely depend on the smoothness of the kernel. Investigating model-dependent cross-validation estimators that discard a proportion of boundary points would be an interesting direction for future work.

\section{Consequences for credible intervals}
\label{sec:discussion}

This section discusses whether the estimated scale parameter, given by the CV or ML estimator, leads to asymptotically well-calibrated credible intervals.
 With the kernel $\hat{\sigma}^2 k(x,x')$, where $\hat{\sigma}^2 = \sigmasqcvest$ or   $\hat{\sigma}^2 = \sigmasqmlest$,  a GP credible interval at $x \in [0,T]$ is given by
 \begin{equation} \label{eq:CI-discus}
     [m_N(x) - \alpha \hat{\sigma} \sqrt{k_N(x)},\quad m_N(x) + \alpha \hat{\sigma} \sqrt{k_N(x)}]
 \end{equation}
where $\alpha > 0$ is a constant (e.g., $\alpha \approx 1.96$ leads to the 95\% credible interval).

As discussed in Section \ref{sec:introduction}, this credible interval \eqref{eq:CI-discus} is asymptotically well-calibrated, if it shrinks to $0$ at the same speed as the decay of the error $|m_N(x) - f(x)|$ as $N$ increases, i.e., the ratio
\begin{equation} \label{eq:ratio-discus}
    \frac{|f(x) - m_N(x)| }{ \hat{\sigma} \sqrt{k_N(x)}}
\end{equation}
should neither diverge to infinity nor converge to $0$.
If this ratio diverges to infinity, the credible interval \eqref{eq:CI-discus} is asymptotically overconfident, in that \eqref{eq:CI-discus} shrinks to $0$ faster than the actual error $|f(x) - m_N(x)|$. If the ratio converges to $0$, the credible interval is asymptotically underconfident, as it increasingly overestimates the actual error. Therefore, the ratio \eqref{eq:ratio-discus} should ideally converge to a positive constant for the credible interval \eqref{eq:CI-discus} to be reliable.

For ease of analysis,  we focus on the random setting in Section \ref{sec:random-setting} where $f$ is a fractional (or integrated fractional) Brownian motion and where we obtained asymptotic upper and lower bounds for $\E \sigmasqcvest$ and $\E \sigmasqmlest$.
We study how the expectation of the posterior variance $\E \hat{\sigma}^2 k_N(x)$ scales with the expected squared error $\E[ f(x) - m_N(x) ]^2$.
Specifically, we analyse their ratio for $\hat{\sigma}^2 = \sigmasqcvest$ and $\hat{\sigma}^2 = \sigmasqmlest$:
\begin{equation} \label{eq:ratios-expectation}
  R_\textup{CV}^\E(x, N) \coloneqq \frac{\E[ f(x) - m_N(x) ]^2}{\E \hat{\sigma}_\textup{CV}^2 k_N(x) } \quad \text{ and } \quad R_\textup{ML}^\E(x, N) \coloneqq \frac{\E[ f(x) - m_N(x) ]^2}{\E \hat{\sigma}_\textup{ML}^2 k_N(x) }.
\end{equation}
The ratio diverging to infinity (resp.~converging to $0$) as $N \to \infty$ suggests that the credible interval \eqref{eq:CI-discus} is asymptotically overconfident (resp.~underconfident) for a non-zero probability of the realisation of $f$. Thus ideally, the ratio should converge to a positive constant.

\Cref{res:uq-theorem-exp} below establishes the asymptotic rates of the ratios in \eqref{eq:ratios-expectation}.

\begin{theorem}
  \label{res:uq-theorem-exp}
  Suppose that $(\prt_N)_{N \in \mathbb{N}}$ are quasi-uniform and $f \sim \GP(0, k_{l, H})$ for $l \in \{0,1 \}$ and $0 < H < 1$. Then,
  \begin{equation*}
    \sup_{x \in [0, T]} R_\textup{CV}^\E(x, N)
    =
      \begin{cases}
        \Theta(1) &\text{ if } \quad l = 0 \text{ and } H \in (0, 1), \\
        \Theta(1) &\text{ if } \quad l = 1 \text{ and } H \in (0, 1/2), \\
        \Theta\left(N^{1-2H}\right) &\text{ if } \quad l = 1 \text{ and } H \in (1/2, 1), \\
      \end{cases}
  \end{equation*}
  and
  \begin{equation*}
    \sup_{x \in [0, T]} R_\textup{ML}^\E(x, N)
    =
      \begin{cases}
        \Theta(1) &\text{ if } \quad l = 0 \text{ and } H \in (0, 1), \\
        \Theta\left(N^{-2H}\right) &\text{ if } \quad l = 1 \text{ and } H \in (0, 1).
      \end{cases}
  \end{equation*}
\end{theorem}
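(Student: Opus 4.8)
The plan is to split each ratio into three pieces that can be controlled independently: the expected squared interpolation error $\E[f(x)-m_N(x)]^2$ in the numerator, the posterior variance $k_N(x)$, and the rates of $\E\sigmasqcvest$ and $\E\sigmasqmlest$ already established in \Cref{res:holder-spaces-exp,res:holder-spaces-exp-ml}. Since $k_N(x)$ depends only on the design $\bx$ and not on $f$, the denominator factors as $k_N(x)\,\E\hat\sigma^2$, so the only genuinely new object is the numerator. Fixing a subinterval $[x_{N,n},x_{N,n+1}]$ and writing $\Delta \coloneqq \Delta x_{N,n}$ and $\theta \coloneqq (x-x_{N,n})/\Delta \in [0,1]$, the Markov property of Brownian motion gives the piecewise-linear posterior mean $m_N(x) = (1-\theta)f(x_{N,n})+\theta f(x_{N,n+1})$ and the Brownian-bridge posterior variance $k_N(x) = \Delta\,\theta(1-\theta)$, both explicit.

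The core step is to evaluate the numerator using self-similarity of the true process. For $l=0$, fractional Brownian motion has stationary increments and self-similarity index $H$, so on the subinterval $f(x)-m_N(x)$ equals in law $\Delta^{H}\,(\tilde f(\theta)-\theta\tilde f(1))$ for a standard fractional Brownian motion $\tilde f$; hence $\E[f(x)-m_N(x)]^2 = \Delta^{2H}\,\Phi_{0,H}(\theta)$ with $\Phi_{0,H}(\theta)=\E(\tilde f(\theta)-\theta\tilde f(1))^2$. For $l=1$, I would write the error as $\int_{x_{N,n}}^{x} f' - \theta\int_{x_{N,n}}^{x_{N,n+1}}f'$ with $f'$ a fractional Brownian motion, observe that the constant part cancels, and rescale to obtain $\E[f(x)-m_N(x)]^2 = \Delta^{2(1+H)}\,\Phi_{1,H}(\theta)\,(1+o(1))$. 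The endpoint asymptotics $\Phi_{0,H}(\theta)\sim\theta^{2H}$ and $\Phi_{1,H}(\theta)\sim c\,\theta^{2}$ as $\theta\to0$ (and symmetrically at $\theta\to1$) are what ultimately decide the supremum.

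To assemble the ratio, I would use quasi-uniformity, which gives $\Delta\asymp N^{-1}$ uniformly in $n$, so substituting the rates of \Cref{res:holder-spaces-exp,res:holder-spaces-exp-ml} collapses $R_\textup{CV}^\E(x,N)$ (and likewise $R_\textup{ML}^\E$, with $\E\sigmasqmlest$ in place of $\E\sigmasqcvest$) to a power of $N$ times the profile ratio $\Phi_{l,H}(\theta)/[\theta(1-\theta)]$, with constants uniform across subintervals. Taking the supremum over $x$ then reduces to bounding $\sup_{\theta\in(0,1)}\Phi_{l,H}(\theta)/[\theta(1-\theta)]$ and reading off the $N$-power: one obtains $\Theta(1)$ when the $\Delta$-powers of numerator and denominator cancel, and the genuine rate $N^{1-2H}$ (for CV at $l=1$, $H>1/2$) or $N^{-2H}$ (for ML at $l=1$) otherwise, matching the claimed expressions.

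The principal obstacle is the supremum near the partition points, where the numerator and $k_N(x)$ both vanish. The profile ratio $\Phi_{l,H}(\theta)/[\theta(1-\theta)]$ stays bounded precisely when the interpolation error decays at least as fast as the linear Brownian-bridge factor $\theta$: this is automatic for $l=1$ (since $\Phi_{1,H}\sim\theta^2$) and for $l=0$ with $H\ge 1/2$, but for $l=0$ with $H<1/2$ the profile decays only like $\theta^{2H}$, so the ratio is singular at the endpoints and one must argue carefully that the claimed constant rate is recovered on the subinterval interiors — equivalently, that the supremum is governed by the bulk scale $k_N\asymp N^{-1}$ rather than by the degenerate neighbourhoods of grid points. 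A secondary, more technical difficulty is that integrated fractional Brownian motion lacks stationary increments, so the local profile is only asymptotically self-similar; the $o(1)$ corrections in the $l=1$ numerator must be shown to be uniform over all $N$ subintervals and all $\theta$.
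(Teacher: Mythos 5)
Your plan follows essentially the same route as the paper's proof: obtain an exact expression for $\E[f(x)-m_N(x)]^2$ on each subinterval, divide by the explicit bridge variance $k_N(x) = (x_n-x)(x-x_{n-1})/\Delta x_{n-1}$, and then quote the rates for $\E\sigmasqcvest$ and $\E\sigmasqmlest$ from \Cref{res:holder-spaces-exp,res:holder-spaces-exp-ml}. The only methodological difference is how the numerator is computed: the paper evaluates it directly from the kernel, reusing \eqref{eq:l=1-exp-2} with $x$ in place of $x_n$, which for $l = 1$ yields the exact identity $\E[f(x)-m_N(x)]^2/k_N(x) = C_H^{-1} \big[ \Delta x_{n-1}^{2H+1} - (x_n-x)^{2H+1} - (x-x_{n-1})^{2H+1} \big]$ with $C_H = 2(H+1)(2H+1)$, bounded above by dropping the negative terms and below by evaluating at midpoints; you instead rescale to a profile $\Phi_{l,H}(\theta)$ by self-similarity. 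The two are equivalent, and your derivation is fine — but your $(1+o(1))$ hedge for $l=1$ is unnecessary. Writing the error as $\frac{1}{\Delta x_{n-1}}\big[ (x_n - x)\int_{x_{n-1}}^x f' - (x - x_{n-1})\int_x^{x_n} f' \big]$, the constant $f'(x_{n-1})$ cancels exactly, and the increments of $f'$ are those of a fractional Brownian motion, which \emph{are} stationary; so $\E[f(x)-m_N(x)]^2 = \Delta x_{n-1}^{2+2H}\, \Phi_{1,H}(\theta)$ holds exactly, uniformly in $n$ and $N$, and your "secondary difficulty" evaporates.

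The obstacle you flag as principal — the endpoint behaviour for $l=0$, $H<1/2$ — is genuine, but your proposed resolution cannot succeed for the supremum taken literally. The same direct computation gives, exactly, for $x \in (x_{n-1}, x_n)$ with $u \coloneqq x - x_{n-1}$ and $v \coloneqq x_n - x$,
\begin{equation*}
  \frac{\E[f(x)-m_N(x)]^2}{k_N(x)} = u^{2H-1} + v^{2H-1} - \Delta x_{n-1}^{2H-1},
\end{equation*}
which for $H < 1/2$ diverges as $u \to 0^+$: the supremum over every subinterval is infinite for each fixed $N$, so the supremum is \emph{not} governed by the bulk scale, and no amount of careful argument at the subinterval interiors recovers $\Theta(N^{1-2H})$ for $\sup_{x\in[0,T]}$ as written. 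For $H \geq 1/2$ the expression is bounded with its maximum of order $\Delta x_{n-1}^{2H-1}$ at the midpoint (by concavity of $t \mapsto t^{2H-1}$), and for $l = 1$ the bracketed expression above is bounded by $\Delta x_{n-1}^{2H+1}$, so the degeneracy never arises in the case the paper proves in detail; the paper dismisses $l = 0$ with "one can similarly show", which is where your proposal stalls and where the paper's own argument is silent. A complete treatment of $l=0$, $H<1/2$ requires restricting the supremum away from the design points (for instance to midpoints, or to $x$ with $\min(u,v) \geq \epsilon\, \Delta x_{n-1}$), under which restriction your profile computation does deliver $\Theta(N^{1-2H})$ and hence the claimed $\Theta(1)$ for both $R_\textup{CV}^\E$ and $R_\textup{ML}^\E$.
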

\begin{proof}
  See \Cref{sec:proofs-discussion}.
\end{proof}

We have the following observations from \Cref{res:uq-theorem-exp}, which suggest an advantage of the CV estimator over the ML estimator for uncertainty quantification:
\begin{itemize}
    \item The ratio for the CV estimator neither diverges to infinity nor decays to $0$ in the range $0 < l+H < 3/2$, which is broader than that of the ML estimator, $0 < l+H < 1$. This observation suggests that the CV estimator can yield asymptotically well-calibrated credible intervals for a broader range of the unknown smoothness $l + H$ of the function $f$ than the ML estimator.

    \item The ratio decays to $0$ for the CV estimator in the range $3/2 < l+H < 2$ and for the ML estimator in the range $1 < l+H < 2$. Therefore, the ML estimator may yield asymptotically underconfident credible intervals for a broader range of the smoothness $l+H$ than the CV estimator.
\end{itemize}
Moreover, for the interior CV estimator introduced in~\Cref{eq:icv-estimator}, it immediately follows from the proof in~\Cref{sec:proofs-discussion} that
\begin{equation*}
\sup_{x \in [0, T]} R_\textup{ICV}^\E(x, N)
=
    \begin{cases}
    \Theta(1) &\text{ if } \quad l = 0 \text{ and } H \in (0, 1), \\
    \Theta(1) &\text{ if } \quad l = 1 \text{ and } H \in (0, 1),
    \end{cases}
\end{equation*}
which implies the ICV estimator can yield asymptotically well-calibrated credible intervals for a broader range of the smoothness than either the CV or the ML estimator.

\section{Experiments}
\label{sec:experiments}

This section describes numerical experiments to substantiate the theoretical results in \Cref{sec:limit-behaviour-for-sigma}.
We define test functions in \Cref{sec:test-functions}, show empirical asymptotic results for the CV estimator in \Cref{sec:experiment-cv}, and report comparisons between the CV and ML estimators in \Cref{sec:comparison-CV-ML}.

To this end, for a continuous function $f$, define $l[f] \in \mathbb{N} \cup \{ 0 \}$ and $\alpha \in (0, 1]$ as
\begin{equation} \label{eq:smoothness-1037}
        l[f] := \sup\{ l \in \mathbb{N} \cup \{ 0 \} : f \in C^{l}([0, T]) \}, \quad \alpha[f] := \sup\{ \alpha \in (0, 1] : f \in C^{l[f],\alpha}([0, T]) \}.
\end{equation}
Then, for arbitrarily small $\varepsilon_1 \in \mathbb{N}$ and $\varepsilon_2 > 0$, we have
\begin{equation*}
  f \in C^{\max ( l[f]-\varepsilon_1, 0),\alpha[f]-\varepsilon_2}([0, T]) \quad \text{ and } \quad f \notin C^{l[f]+\varepsilon_1,\alpha[f]+\varepsilon_2}([0, T]).
\end{equation*}
In this sense, $l[f]$ and $\alpha[f]$ characterise the smoothness of $f$.

\subsection{Test functions}
\label{sec:test-functions}

We generate test functions $f: [0,1] \to \mathbb{R}$ as sample paths of stochastic processes with varying degrees of smoothness, as defined below.
The left columns of \Cref{fig:cv,fig:cv-vs-ml} show samples of these functions.

\begin{itemize}
    \item
To generate nowhere differentiable test functions, we use the Brownian motion (BM), the Ornstein--Uhlenbeck process (OU), and the fractional Brownian motion (FBM\footnote{We use \url{https://github.com/crflynn/fbm} to sample from FBM.}) which are zero-mean GPs with kernels
\begin{align*}
    k_\textup{BM}(x,x') & = \min(x, x'),\quad   k_\textup{OU}(x,x')  = \big(e^{- \lambda \lvert x-x' \rvert} - e^{-\lambda (x+x')}\big) / 4,  \\
    k_\textup{FBM}(x,x') & =  \big( \, \lvert x \rvert^{2H} + \lvert x' \rvert^{2H} - \lvert x-x'\rvert^{2H}\big) / 2,
\end{align*}
where $\lambda > 0$ and $0<H<1$ is the Hurst parameter (recall that the FBM $=$ BM if $H = 1/2$). We set $\lambda = 0.2$ in the experiments below.
Almost all samples $f$ from these processes satisfy $l[f] = 0$.
For BM and OU we have $\alpha[f] = 1/2$ and for FBM $\alpha[f] = H$ (see \Cref{sec:fbm}).
It is well-known that the OU process with the kernel $k_\textup{OU}$ above satisfies the stochastic differential equation
\begin{equation} \label{eq:OU-SDE}
  \mathrm{d} f(t) = -\lambda f(t) \mathrm{d} t + \sqrt{\frac{\lambda}{2}} \, \mathrm{d} B(t),
\end{equation}
where $B$ is the standard Brownian motion whose kernel is $k_{\rm BM}$.

\item
To generate differentiable test functions, we use once (iFBM) and twice (iiFBM) integrated fractional Brownian motions
\begin{equation*}
    f_\textup{iFBM}(x) =\int_0^x f_\textup{FBM}(z) \, \mathrm{d} z \quad \text{ and } \quad f_\textup{iiFBM}(x) =\int_0^x f_\textup{iFBM}(z) \, \mathrm{d} z,
\end{equation*}
where $f_\textup{FBM} \sim \GP(0, k_{\rm FBM})$.
See~\eqref{eq:iFBM-kernel-explicit} for the iFBM kernel.
With $H$ the Hurst parameter of the original FBM, almost all samples $f$ from the above processes satisfy $l[f] = 1$ and $\alpha[f] = H$ (iFBM) or $l[f] = 2$ and $\alpha[f] = H$ (iiFBM).

\item
We also consider a piecewise infinitely differentiable function $f(x) = \sin 10x + [x>x_0]$, where $x_0$ is randomly sampled from the uniform distribution on $[0,1]$ and $[x > x_0]$ is $1$ if $x > x_0$ and $0$ otherwise. This function is of finite quadratic variation with $V^2(f) = 1$.
\end{itemize}

Denote $\hat{\sigma}^2 = \lim_{N \to \infty} \sigmasqcvest$.
 For the above test functions, with equally-spaced partitions, we expect the following asymptotic behaviours for the CV estimator from
\Cref{res:holder-spaces,res:fqv-estimator,res:holder-spaces-exp}, \Cref{prop:bm-almost-sure}, the definition of quadratic variation, and Equation~\eqref{eq:OU-SDE}:
\begin{align*}
  \text{BM ($l[f]=0$, $\alpha[f]=1/2$):} &\quad\quad \sigmasqcvest = \bigo(1) \quad &&\hspace{-1cm}\text{ and } \quad \hat{\sigma}^2 = 1, \\
  \text{OU ($l[f]=0$, $\alpha[f]=1/2$):} &\quad\quad \sigmasqcvest = \bigo(1) \quad &&\hspace{-1cm}\text{ and } \quad \hat{\sigma}^2 = \lambda/2, \\
  \text{FBM  ($l[f]=0$, $\alpha[f]=H$):} &\quad\quad \sigmasqcvest = \bigo(N^{1 - 2H}) \quad &&\hspace{-1cm}\text{ and } \quad \hat{\sigma}^2 = 0, \\
  \text{iFBM  ($l[f]=1$, $\alpha[f]=H$):} &\quad\quad \sigmasqcvest = \bigo(N^{-1 - 2H}) \quad &&\hspace{-1cm}\text{ and } \quad \hat{\sigma}^2 = 0, \\
  \text{iiFBM  ($l[f]=2$, $\alpha[f]=H$):} &\quad\quad \sigmasqcvest = \bigo(N^{-2}) \quad &&\hspace{-1cm}\text{ and } \quad \hat{\sigma}^2 = 0, \\
  \sin 10x + [x > x_0]: &\quad\quad \sigmasqcvest = \bigo(1) \quad &&\hspace{-1cm}\text{ and } \quad \hat{\sigma}^2 = 1.
\end{align*}
Note that the above rate for the iFBM holds for $0 < H \leq 1/2$.
The chosen functions allow us to cover a range of $\alpha[f]$ and $l[f]$ relevant to the varying rate of convergence in~\Cref{res:holder-spaces,res:holder-spaces-exp}, as well as a range of $V^2(f)$ relevant to the limit in~\Cref{res:fqv-estimator}, $\lim_{N \to \infty} \sigmasqcvest = V^2(f) / T$.

\subsection{Asymptotics of the CV estimator}
\label{sec:experiment-cv}

\Cref{fig:cv} shows the asymptotics of $\sigmasqcvest$, where each row corresponds to one stochastic process generating test functions $f$; the rows are displayed in the increasing order of smoothness as quantified by $l[f] + \alpha[f]$.
The estimates are obtained for equally-spaced partitions of sizes $N=10,10^2,\dots,10^5$.
In each row, the left panel plots a single sample of generated test functions $f$. The middle panel shows the mean and confidence intervals (of two standard deviations) of $\sigmasqcvest$ for 100 sample realisations of $f$ for each sample size $N$. The right panel describes the convergence rate of $\sigmasqcvest$ to its limit point $\hat{\sigma}^2 = \lim_{N \to \infty} \sigmasqcvest$ on the log scale.

We have the following observations:
\begin{itemize}
\item The first two rows (the FBM and OU) and the last (the piece-wise infinitely differentiable function) confirm \Cref{res:fqv-estimator}, which states the convergence $\sigmasqcvest \to V^2(f) / T$ as $N \to \infty$. While \Cref{res:fqv-estimator} does not provide convergence rates, the rates in the first two rows appear to be $N^{-1/2}$. In the last row the rate is $N^{-2}$.
    \item The remaining rows show that the observed rates of $\sigmasqcvest$ to $0$ are in complete agreement with the rates predicted by \Cref{res:holder-spaces,res:holder-spaces-exp}.
 In particular, the rates are adaptive to the smoothness $l[f] + \alpha [f]$ of the function if $l[f] + \alpha[f] \leq 3/2$, as predicted.
\end{itemize}

\subsection{Comparison of CV and ML estimators}
\label{sec:comparison-CV-ML}

\Cref{fig:cv-vs-ml} shows the decay rates of $\sigmasqcvest$ and $\hat{\sigma}^2_\textup{ML}$ to $0$ for test functions $f$ with $l[f] = 1$, under the same setting as for \Cref{fig:cv}. In this case, \Cref{res:holder-spaces-ml,res:holder-spaces-exp-ml} predict that  $\sigmasqmlest$ decays at the rate $\Theta(N^{-1})$  regardless of the smoothness; this is confirmed in the right column. In contrast, the middle column shows again that $\sigmasqcvest$ decays with a rate that adapts to  $l[f]$ and $\alpha[f]$ as long as $l[f] + \alpha[f] \leq 3/2$, as predicted by \Cref{res:holder-spaces,res:holder-spaces-exp}. These results empirically support our theoretical finding that the CV estimator is adaptive to the unknown smoothness $l[f] + \alpha[f]$ of a function $f$ for a broader range of smoothness than the ML estimator.

Additionally, in~\Cref{sec:cv-vs-ml-sobolev}, we compare the asymptotics of the CV and ML estimators when the underlying kernel is a Mat\'ern kernel and the Sobolev smoothness of the true functions differs from that of the kernel. Similarly to the results presented in this section, we observe that the CV estimator exhibits a larger range of adaptation than the ML estimator.

\begin{figure}
    \centering
    \includegraphics[width=\textwidth]{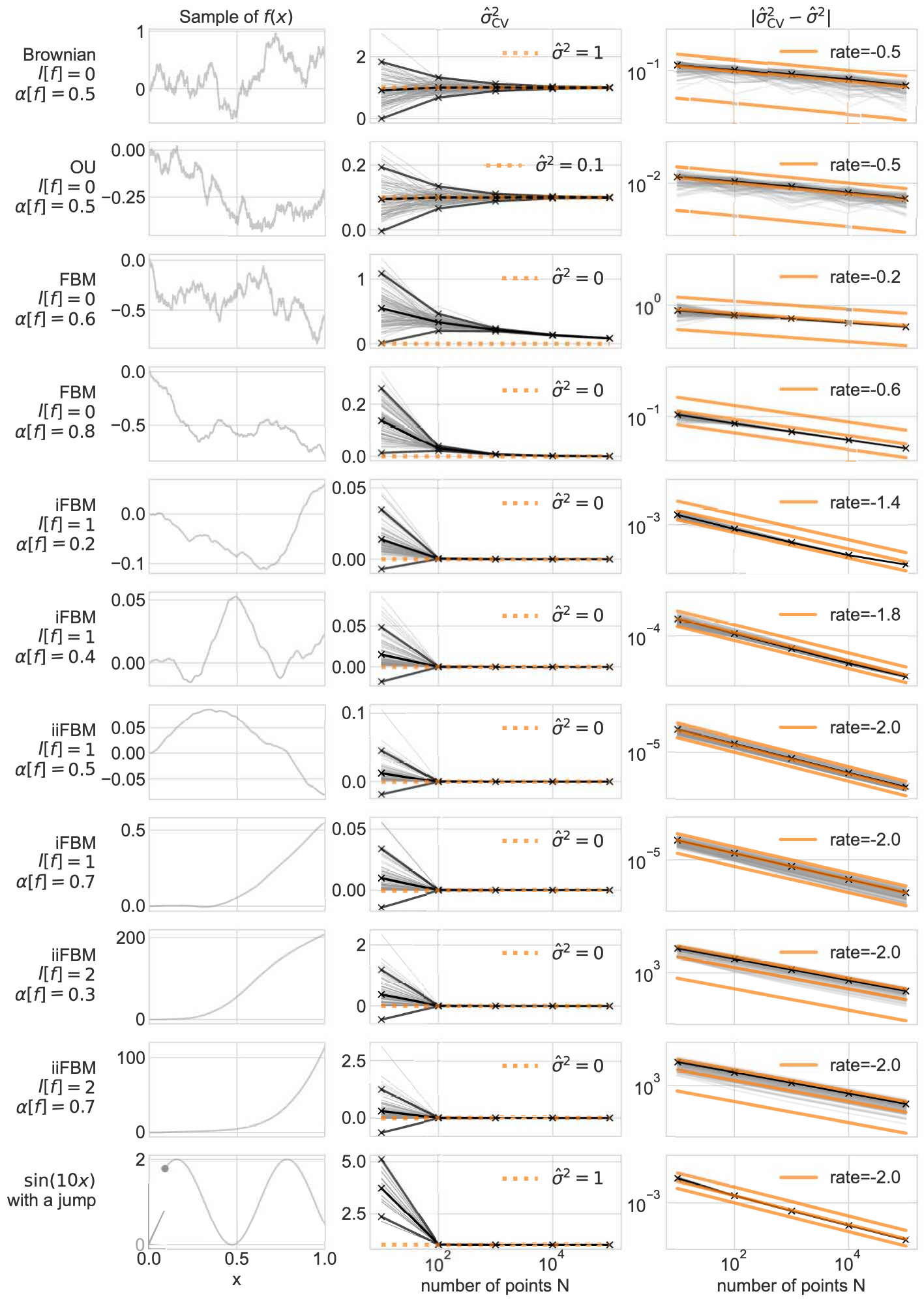}
    \caption{Asymptotics of CV estimators for functions of varying smoothness as quantified by $l[f]$ and $\alpha[l]$ in \eqref{eq:smoothness-1037}. Runs on individual 100 samples from $f$ are in gray, means and confidence intervals (of two standard deviations) are in black.}
    \label{fig:cv}
\end{figure}
\begin{figure}
    \centering
    \includegraphics[width=\textwidth]{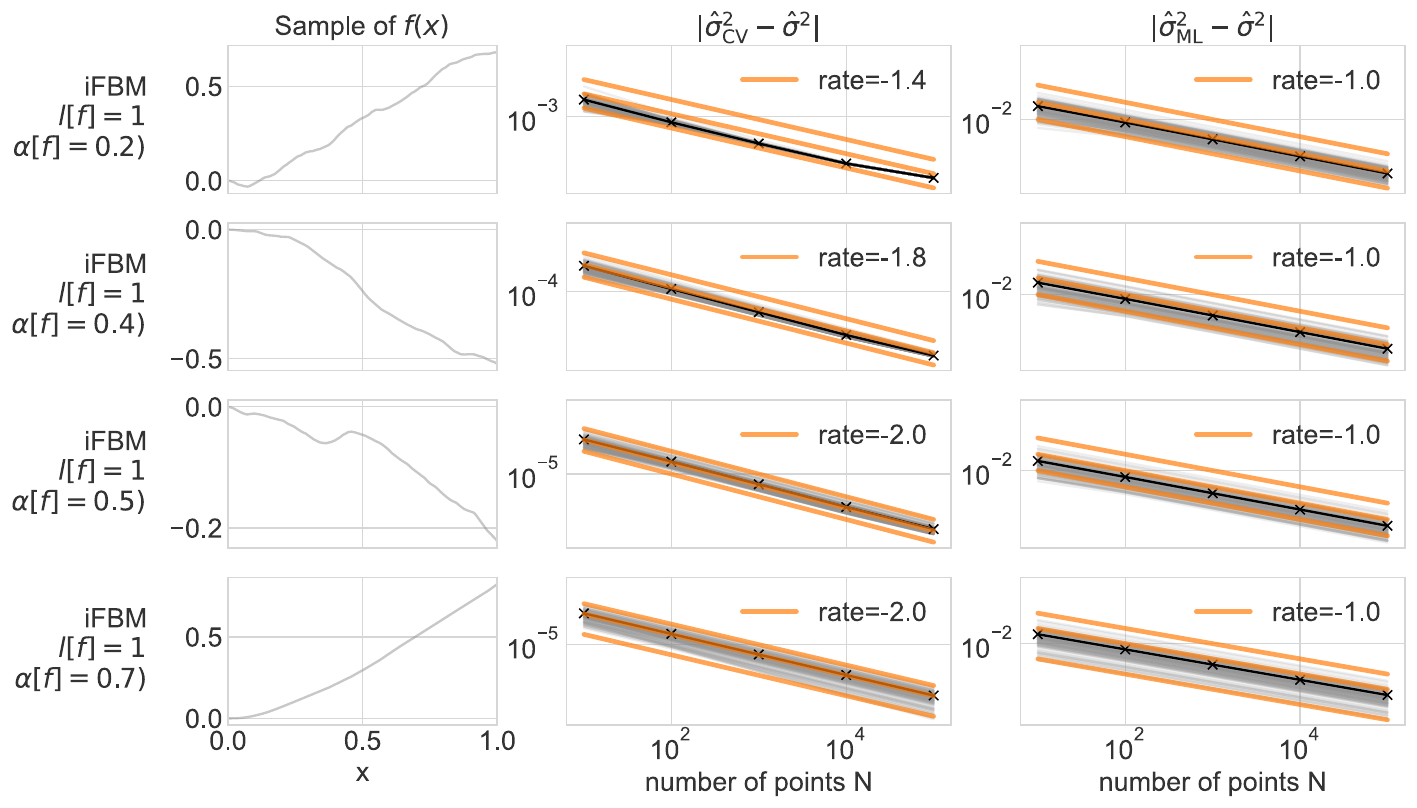}
    \caption{Asymptotics of CV estimator compared to asymptotics of ML estimators, for once differentiable functions.}
    \label{fig:cv-vs-ml}
\end{figure}

\section{Conclusion and future work}
\label{sec:conclusion}

We have analysed the asymptotics of the CV and ML estimators for the scale parameter in GP interpolation with the Brownian motion kernel. As a novel contribution, our analysis covers the misspecified case where the smoothness of the true function $f$ is different from that of the samples from the GP prior. Our main results in \cref{res:holder-spaces,res:holder-spaces-ml,res:holder-spaces-exp,res:holder-spaces-exp-ml} indicate that both CV and ML estimators can adapt to the unknown smoothness of $f$, but the range of smoothness for which this adaptation happens is broader for the CV estimator. Accordingly, the CV estimator can make GP uncertainty estimates asymptotically well-calibrated for a wider range of smoothness than the ML estimator, as indicated in \Cref{res:uq-theorem-exp}. In this sense, the CV estimator has an advantage over the ML estimator. The experiments provide supporting evidence for the theoretical results.

Natural next steps include the following:
\begin{itemize}
    \item Supplement the asymptotic upper bounds in \Cref{res:holder-spaces,res:holder-spaces-ml} of the deterministic setting with matching lower bounds.
    \item Extend the analyses (of both the deterministic and random settings) to more generic finitely smooth kernels, higher dimensions, and a noisy setting.
\end{itemize}

The matching lower bounds, if obtained, would enable the analysis of the ratio between the prediction error $|f(x) - m_N(x)|$ and the posterior standard deviation $\hat{\sigma} \sqrt{\smash[b]{k_N(x)}}$ in the deterministic setting, corresponding to the one in \Cref{sec:discussion} for the random setting.  Such an analysis would need additional assumptions on the true function $f$, such as the homogeneity of the smoothness of $f$ across the input space.  It also requires a sharp characterisation of the error $| f(x) - m_N(x) |$, which could use super convergence results in \citet[Section~11.5]{Wendland2005} and \citet{Schaback2018}.
Most natural kernel classes for extension are Mat\'erns and other kernels whose RKHS are norm-equivalent to Sobolev spaces; we conduct initial empirical analysis in~\Cref{sec:cv-vs-ml-sobolev} and observe results consistent with the main results in this paper. To this end, it would be possible to adapt the techniques used in \citet{Karvonen2020} for analysing the ML estimator to the CV estimator. In any case, much more advanced techniques
than those used here would be needed. A potentially more straightforward extension could be one to multiple times integrated Brownian motion kernels for which Gaussian process interpolation corresponds to spline interpolation~\citep[Chapter~1]{Wahba1990}.
In particular, finding analytic expression for the mean and variance of a cubic spline kernel given in, for example, Equation~(6.28) of \citet{rassmussen2006gaussian} can be reduced to the problem of inverting a tridiagonal matrix targeted in~\citet{mallik2001inverse} and~\citet{kilicc2008explicit}.

\section{Proofs} \label{sec:proofs}

This section provides the proofs of the main results and other lengthy computations.
For $x_0 = 0$ and $x_1, \dots, x_N \in [0,T]$, we will use the following notation whenever it can improve the readability or highlight a point:
\begin{align}
   \Delta x_n &\coloneqq x_{n+1} - x_n, \quad n  = 0, 1, \dots, N-1,  \nonumber \\
   f_n &\coloneqq f(x_n), \quad n = 0, 1, \dots, N. \label{eq:notation-x-f}
\end{align}


\subsection{Explicit expressions for the CV and ML estimators} \label{sec:explicit-post-mean-cov}
Let us define $x_0 = 0$ and use the convention $f(x_0) = 0$.
By a direct computation it is straightforward to verify that the inverse of the Gram matrix of the Brownian motion kernel $k(x, x') = \min(x, x')$ over the points $0=x_0<x_1 < x_2 < \dots < x_N$ is the band matrix
\begin{align*}
  k(\bx, \bx)^{-1} =
  \begin{bmatrix}
      x_1 & x_1 & x_1 & \dots & x_1 & x_1 \\
      x_1 & x_2 & x_2 & \dots & x_2 & x_2 \\
      x_1 & x_2 & x_3 & \dots & x_3 & x_3 \\
      \vdots & \vdots & \vdots  & \ddots  & \vdots & \vdots \\
      x_1 & x_2 & x_3 & \dots & x_{N-1} & x_{N-1} \\
      x_1 & x_2 & x_3 & \dots & x_{N-1} & x_N
    \end{bmatrix}^{-1}
    =
    \begin{bmatrix}
        b_1    & c_1    & 0          & \dots  & 0       & 0      \\
        c_1    & b_2    & c_2        & \dots  & 0       & 0      \\
        0      & c_2    & b_3        & \dots  & 0       & 0      \\
        \vdots & \vdots & \vdots & \ddots & \vdots  & \vdots \\
        0      & 0          & 0      & \dots  & b_{N-1} & c_{N-1}\\
        0      & 0          & 0      & \dots  & c_{N-1} & b_N    \\
    \end{bmatrix} ,
\end{align*}
where
\begin{align*}
    b_i &= \frac{x_{i+1} - x_{i-1}}{(x_{i-1} - x_i) (x_i - x_{i+1})} \quad \text{for} \quad i \in \{2,\dots,N-1\}, \qquad b_N &= - \frac{1}{x_{N-1} - x_{N}}, \\
    c_i &= \frac{1}{(x_i - x_{i+1})} \quad \text{for} \quad i \in \{1,\dots,N-1\}. \\
\end{align*}
It follows that the posterior mean and covariance functions in \eqref{eq:posterior-moments} can be expressed as
\begin{equation} \label{eq:explicit-post-mean}
  m_N(x) =
  \begin{dcases}
    \frac{ (x_n - x) f(x_{n-1}) + (x - x_{n-1}) f(x_{n}) }{x_n - x_{n-1}} &\hspace{-0.2cm}\text{ if } x \in [x_{n-1}, x_n] \text{ for some } 1 \leq n \leq N, \\
    f(x_N) &\hspace{-0.2cm}\text{ if } x \in [x_N, T]
  \end{dcases}
\end{equation}
and
\begin{equation} \label{eq:explicit-post-cov}
  k_N(x, x') =
  \begin{dcases}
    \frac{ ( x_n - x') ( x - x_{n-1})  }{x_n - x_{n-1}} &\text{ if } x_{n-1} \leq x \leq x' \leq x_n \text{ for some } 1 \leq n \leq N, \\
    x - x_N &\text{ if } x_N \leq x \leq x' \leq T, \\
    0 &\text{ otherwise}.
  \end{dcases}
\end{equation}
We omit the case $x' \leq x$ for $k_N(x,x')$ as this case is  obtained by the symmetry $k_N(x,x') = k_N(x', x)$.

Using these expressions, we have, for each $1 \leq n < N$:
%
\begin{equation*}
  m_{\setminus n}(x_n) = \frac{ (x_n - x_{n + 1}) f(x_{n-1}) + (x_{n - 1} - x_{n}) f(x_{n+1}) }{x_{n-1} - x_{n + 1}}
\end{equation*}
and
\begin{equation*}
  k_{\setminus n} (x_n) = k_{\setminus n}(x_n, x_n) = \frac{ ( x_n - x_{n + 1}) ( x_n - x_{n - 1}) }{x_{n-1} - x_{n + 1}}
\end{equation*}
For $n = N$, we have $m_{\setminus N}(x_N) = f(x_{N-1})$ and $k_{\setminus N}(x_N) = x_N - x_{N-1}$.
Inserting these expressions in \eqref{eq:sigma-cv} and using the notation \eqref{eq:notation-x-f}, the CV estimator can be written as
\begin{equation}
\label{eq:sigma-cv-w-deltas}
\begin{split}
    \sigmasqcvest &= \frac{1}{N} \Bigg[ \frac{(x_2 f_1 -  x_1 f_2  )^2}{ x_1 x_2 \Delta x_1 } + \sum_{n=2}^{N-1}  \frac{( \Delta x_{n-1} [f_{n+1} - f_n] - \Delta x_n [f_n - f_{n-1}] )^2}{ (\Delta x_n + \Delta x_{n-1}) \Delta x_n \Delta x_{n-1} } \\
    &\hspace{1.2cm}+ \frac{(f_N - f_{N-1})^2}{ \Delta x_{N-1} } \Bigg].
\end{split}
\end{equation}

For the ML estimator \eqref{eq:sigma-ml}, we obtain the explicit expression
\begin{equation}
  \label{eq:sigma-ml-w-deltas}
  \sigmasqmlest = \frac{1}{N} \sum_{n=1}^N \frac{[ f(x_n) - f(x_{n-1}) ]^2}{ \Delta x_{n-1} }
\end{equation}
by observing that $m_{n-1}(x_n) = f(x_n)$ and $k_{n-1}(x_n) = x_n - x_{n-1}$.

\begin{remark}
    The leave-$p$-out estimator $\hat{\sigma}_{\textup{CV}(p)}^2$ can be expressed in a form similar (albeit more complicated) to~\Cref{eq:sigma-cv-w-deltas}. We derive this expression in~\Cref{sec:explicit_expression_for_leave_p_out}. This suggests that the analysis in~\Cref{sec:limit-behaviour-for-sigma} could potentially be generalised to apply to the leave-$p$-out estimators, a possibility that we leave open for future research to explore.
\end{remark}

\subsection{Proofs for Section~\ref{sec:results-deterministic}} \label{sec:proofs-deterministic}

\begin{proof}[Proof of Theorem~\ref{res:holder-spaces}]
The estimator $\sigmasqcvest$ in~\eqref{eq:sigma-cv-w-deltas} may be written as
\begin{equation} \label{eq:sigma-cv-boundary-interior}
  \sigmasqcvest = B_{1, N} + I_N + B_{2,N}
\end{equation}
in terms of the boundary terms
\begin{equation} \label{eq:boundary-terms}
  B_{1,N} = \frac{1}{N} \cdot \frac{(x_2 f_1 -  x_1 f_2  )^2}{ x_1 x_2 \Delta x_1 } \quad \text{ and } \quad B_{2,N} = \frac{1}{N} \cdot \frac{(f_N - f_{N-1})^2}{ \Delta x_{N-1} }
\end{equation}
and the interior term
\begin{equation} \label{eq:interior-term}
    I_N = \frac{1}{N} \sum_{n=2}^{N-1}  \frac{( \Delta x_{n-1} [f_{n+1} - f_n] - \Delta x_n [f_n - f_{n-1}] )^2}{ (\Delta x_n + \Delta x_{n-1}) \Delta x_n \Delta x_{n-1} } .
\end{equation}
The claimed rate in~\eqref{eq:main-result} is $\bigo( N^{-2} )$ if $l \geq 2$ or $l = 1$ and $\alpha \geq 1/2$.
By the inclusion properties of H\"older spaces in Section \ref{sec:holder-space}, it is therefore sufficient to consider the cases (a) $l = 0$ and (b) $l=1$ and $\alpha \in (0, 1/2]$.

Suppose first that $l = 0$.
Let $L$ be a H\"older constant of a function $f \in C^{0, \alpha}([0,T])$.
Using the H\"older condition, the bounding assumption on $\Delta x_n$, and $f_0 = f(0) = 0$, the boundary terms can be bounded as
\begin{align}
  B_{1,N} = \frac{1}{N} \cdot \frac{ (x_1 (f_1 - f_2) + \Delta x_1 (f_1 - f_0) )^2}{x_1 x_2 \Delta x_1} &\leq \frac{1}{N} \cdot \frac{2(x_1^2 (f_1 - f_2)^2 + \Delta x_1^2 (f_1 - f_0)^2)}{ x_1 x_2 \Delta x_1 } \nonumber \\
    &\leq \frac{1}{N} \cdot \frac{ 2L^2 (x_1^2 \Delta x_1^{2\alpha} + x_1^{2\alpha} \Delta x_1^2  ) }{x_1 x_2 \Delta x_1 } \nonumber \\
    & = \bigo(N^{-1} \Delta x_1^{2\alpha - 1}) \nonumber \\
    & = \bigo(N^{-2 \alpha}) \label{eq:bound-B1N}
\end{align}
and
\begin{equation}
  B_{2,N} = \frac{1}{N} \cdot \frac{(f_N - f_{N-1})^2}{ \Delta x_{N-1} } \leq \frac{1}{N} L^2 \Delta x_{N-1}^{2\alpha - 1} = \bigo(N^{- 2\alpha}). \label{eq:bound-B2N}
\end{equation}
Similarly, the interior term is bounded as
\begin{align*}
  I_N &\leq
  \frac{2}{N}\sum_{n=2}^{N-1}  \frac{\Delta x_{n-1}^2 (f_{n+1} - f_n)^2 + \Delta x_n^2 ( f_n - f_{n-1})^2}{ (\Delta x_n + \Delta x_{n-1}) \Delta x_n \Delta x_{n-1} } \\
  &\leq \frac{2L^2}{N} \sum_{n=2}^{N-1}  \frac{\Delta x_{n-1}^2 \Delta x_n^{2\alpha} + \Delta x_n^2 \Delta x_{n-1}^{2\alpha} }{ (\Delta x_n + \Delta x_{n-1}) \Delta x_n \Delta x_{n-1} }
  \\
    & = \frac{2L^2}{N} \sum_{n=2}^{N-1}  \frac{ \Delta x_{n-1} \Delta x_n^{2\alpha - 1} + \Delta x_n \Delta x_{n-1}^{2\alpha - 1}  }{ \Delta x_n + \Delta x_{n-1}} \\
    &= \frac{2L^2}{N} \sum_{n=2}^{N-1}  \bigg( \frac{ \Delta x_{n-1}}{ \Delta x_n + \Delta x_{n-1}} \Delta x_n^{2\alpha - 1} + \frac{\Delta x_n }{ \Delta x_n + \Delta x_{n-1}} \Delta x_{n-1}^{2\alpha - 1} \bigg) \\
    & \leq \frac{2L^2}{N} \sum_{n=2}^{N-1} \big( \Delta x_n^{2\alpha - 1} + \Delta x_{n-1}^{2\alpha - 1} \big) \\
    &= \bigo (N^{1 - 2\alpha}).
\end{align*}
Inserting the above bounds in~\eqref{eq:sigma-cv-boundary-interior} yields $\sigmasqcvest = \bigo(N^{-2\alpha} + N^{1 - 2\alpha}) = \bigo(N^{1-2\alpha})$, which is the claimed rate when $l=0$.

Suppose then that $l = 1$ and $\alpha \in (0, 1/2]$, so that the first derivative $f'$ of $f \in C^{1, \alpha}([0, T])$ is $\alpha$-H\"older and hence continuous.
Because a continuously differentiable function is Lipschitz, we may set $\alpha = 1$ in the estimates \eqref{eq:bound-B1N} and \eqref{eq:bound-B2N} for the boundary terms $B_{1,N}$ and $B_{2,N}$ in the preceding case.
This shows these terms are $\bigo(N^{-2})$.
Because $f$ is differentiable, we may use the mean value theorem to write the interior term as
\begin{align*}
    I_N &= \frac{1}{N} \sum_{n=2}^{N-1}  \frac{\Delta x_{n-1} \Delta x_n}{\Delta x_{n-1} + \Delta x_n} \bigg( \frac{f_{n+1} - f_n}{\Delta x_n} - \frac{f_n - f_{n-1}}{\Delta x_{n-1}} \bigg)^2 \\
    &= \frac{1}{N} \sum_{n=2}^{N-1}  \frac{\Delta x_{n-1} \Delta x_n}{\Delta x_{n-1} + \Delta x_n} \big[ f'(\tilde{x}_n) - f'(\tilde{x}_{n-1}) \big]^2,
\end{align*}
where $\tilde{x}_n \in (x_{n}, x_{n+1})$.
Let $L$ be a H\"older constant of $f'$.
Then the H\"older continuity of $f'$ and the assumption that $\|\prt_N \|=\bigo(N^{-1})$ yield
\begin{align*}
    I_N \leq \frac{L^2}{N} \sum_{n=2}^{N-1}  \frac{\Delta x_{n-1} \Delta x_n}{\Delta x_{n-1} + \Delta x_n} \lvert \tilde{x}_n - \tilde{x}_{n-1} \rvert^{2\alpha} &\leq \frac{L^2}{N} \sum_{n=2}^{N-1}  \frac{\Delta x_{n-1} \Delta x_n}{\Delta x_{n-1} + \Delta x_n} (\Delta x_{n-1} + \Delta x_n)^{2\alpha} \\
    &\leq \frac{L^2}{N} \sum_{n=2}^{N-1} \Delta x_n (\Delta x_{n-1} + \Delta x_n)^{2\alpha} \\
    &= \bigo(N^{-2\alpha - 1}).
\end{align*}
Using the above bounds in~\eqref{eq:sigma-cv-boundary-interior} yields $\sigmasqcvest = \bigo(N^{-2} + N^{-2\alpha-1}) = \bigo(N^{-2\alpha-1})$, which is the claimed rate when $l=1$.
\end{proof}

\begin{proof}[Proof of Theorem~\ref{res:holder-spaces-ml}]

  From~\eqref{eq:sigma-ml-w-deltas} we have
  \begin{equation*}
    \sigmasqmlest = \frac{1}{N} \sum_{n=1}^N \frac{(f_n - f_{n-1})^2}{\Delta x_{n-1}}.
  \end{equation*}
  Suppose first that $l = 0$.
  As in the proof of \Cref{res:holder-spaces}, we get
  \begin{equation} \label{eq:holder-spaces-ml-derivation}
    \sigmasqmlest = \frac{1}{N} \sum_{n=1}^N \frac{(f_n - f_{n-1})^2}{\Delta x_{n-1}} \leq \frac{L^2}{N} \sum_{n=1}^N \Delta x_{n-1}^{2\alpha - 1} = \bigo\big( N^{1-2\alpha} \big)
  \end{equation}
  when $\|\prt_N \|=\bigo(N^{-1})$.
  Suppose then that $l = 1$.
  By the mean value theorem there are $\xi_n \in (x_{n-1}, x_n)$ such that
  \begin{equation*}
    \sigmasqmlest = \frac{1}{N} \sum_{n=1}^N \frac{(f_n - f_{n-1})^2}{\Delta x_{n-1}} = \frac{1}{N} \sum_{n=1}^N \Delta x_{n-1} \bigg( \frac{f_n - f_{n-1}}{\Delta x_{n-1}} \bigg)^2 = \frac{1}{N} \sum_{n=1}^N \Delta x_{n-1} f'(\xi_n)^2.
  \end{equation*}
  Since $f'$ is continuous on $[0, T]$ and hence Riemann integrable, we obtain the asymptotic equivalence
  \begin{equation*}
    N \sigmasqmlest \to \int_0^T f'(x)^2 \, \mathrm{d} x \quad \text{ as } \quad N \to \infty
  \end{equation*}
  when $\|\prt_N \| \to 0$ as $N \to \infty$.
  The integral is positive because $f$ has been assumed non-constant.
\end{proof}

\begin{proof}[Proof of Theorem~\ref{res:fqv-estimator}]
For equally-spaced partitions, $\Delta x_n = x_1 = T/N$ for all $n \in \{0, \dots, N-1\}$, the estimator $\sigmasqcvest$ in~\eqref{eq:sigma-cv-w-deltas} takes the form
\begin{equation*}
    \sigmasqcvest = \frac{1}{T} \Bigg[ \frac{(x_2 f_1 -  x_1 f_2  )^2}{ x_1 x_2} + \frac{1}{2}\sum_{n=2}^{N-1} (  (f_{n+1} - f_n) - (f_n - f_{n-1}) )^2 + (f_N - f_{N-1})^2\Bigg].
\end{equation*}
Recall from the proof of \Cref{res:holder-spaces} the decomposition
\begin{equation*}
  \sigmasqcvest = B_{1, N} + I_N + B_{2,N}
\end{equation*}
in terms of the boundary terms $B_{1,N}$ and $B_{2,N}$ in~\eqref{eq:boundary-terms} and the interior term $I_N$ in~\eqref{eq:interior-term}.
Because $f$ is assumed continuous on the boundary and equispaced partitions are quasi-uniform, both $B_{1,N}$ and $B_{2,N}$ tend to zero as $N \to \infty$.
We may therefore focus on the interior term, which decomposes as
\begin{equation*}
\begin{split}
    I_N = {}& \frac{1}{2}\sum_{n=2}^{N-1} \left(  (f_{n+1} - f_n) - (f_n - f_{n-1}) \right)^2\\
    ={}& \sum_{n=2}^{N-1} (f_{n+1} - f_n)^2
    + ( f_n - f_{n-1} )^2 - \frac{1}{2} ( f_{n+1} - f_{n-1} )^2
\end{split}
\end{equation*}
The sums $\sum_{n=2}^{N-1}  (f_{n+1} - f_n)^2$ and $\sum_{n=2}^{N-1}  ( f_n - f_{n-1} )^2$ tend to $V^2(f)$ by definition. To establish the claimed bound we are therefore left to prove that
\begin{equation}
\label{eq:requirement-double-prt-to-v2f}
    \sum_{n=2}^{N-1} (f_{n+1} - f_{n-1} )^2 \to 2V^2(f) \qquad \text{as} \qquad N \to \infty.
\end{equation}
We may write the sum as
\begin{align*}
  \sum_{n=2}^{N-1} (f_{n+1} - f_{n-1} )^2 &= \sum_{n=1}^{\lfloor \frac{N-1}{2} \rfloor} (f_{2n+1} - f_{2n-1} )^2 + \sum_{n=1}^{\lfloor \frac{N-2}{2} \rfloor} (f_{2n+2} - f_{2n} )^2.
\end{align*}
Consider a sub-partition of $\prt_N$ that consists of odd-index points $x_1, x_3, \dots x_{2\lfloor \frac{N-1}{2} \rfloor+1} $ of $\prt_N$. The sequence of these sub-partitions is quasi-uniform with constant $2$. The assumption that the quadratic variation is $V^2(f)$ for all partitions with quasi-uniformity constant $2$ implies that
\begin{align*}
  \lim_{N \to \infty} \sum_{n=1}^{\lfloor \frac{N-1}{2} \rfloor} (f_{2n+1} - f_{2n-1} )^2 = V^2(f).
\end{align*}
The same will hold for sub-partitions formed of even-index points of $\prt_N$, giving
\begin{align*}
  \lim_{N \to \infty} \sum_{n=1}^{\lfloor \frac{N-2}{2} \rfloor} (f_{2n+2} - f_{2n} )^2 = V^2(f).
\end{align*}
Thus,~\eqref{eq:requirement-double-prt-to-v2f} holds. This completes the proof.
\end{proof}

\subsection{Proofs for Section~\ref{sec:random-setting}} \label{sec:proofs-random}

\begin{proof}[Proof of Theorem~\ref{res:holder-spaces-exp}]
    Recall the explicit expression of $\sigmasqcvest$ in~\eqref{eq:sigma-cv-w-deltas}:
    \begin{equation}
    \begin{split} \label{eq:expnasion-1056}
        \sigmasqcvest &= \frac{1}{N} \Bigg[ \frac{(x_2 f_1 -  x_1 f_2  )^2}{ x_1 x_2 \Delta x_1 } + \sum_{n=2}^{N-1}  \frac{( \Delta x_{n-1} [f_{n+1} - f_n] - \Delta x_n [f_n - f_{n-1}] )^2}{ (\Delta x_n + \Delta x_{n-1}) \Delta x_n \Delta x_{n-1} } \\
        &\hspace{1.2cm}+ \frac{(f_N - f_{N-1})^2}{ \Delta x_{N-1} } \Bigg].
    \end{split}
    \end{equation}
    We consider the cases $l = 0$ and $l = 1$ separately. Recall that $f \sim \GP(0, k_{l, H})$ implies that $\E[f(x)f(x')] = k_{l,H}(x, x')$.

    Suppose first that $l = 0$, in which case $f \sim \GP(0, k_{0, H})$ for the fractional Brownian motion kernel $k_{0,H}$ in~\eqref{eq:fbm-def}.
    In this case the expected values of squared terms in the expression for $\sigmasqcvest$ are $\E [x_2 f_1 -  x_1 f_2  ]^2 = x_1 x_2 \Delta x_1  (x_1^{2H - 1} + \Delta x_1^{2H-1}- (x_1+\Delta x_1)^{2H - 1} )$,
    \begin{align*}
      \E \big[ &\Delta x_{n-1} (f_{n+1} - f_n) - \Delta x_n (f_n - f_{n-1}) \big]^2  \\
      &= \big( \Delta x_n^{2H-1} +  \Delta x_{n-1}^{2H-1} - (\Delta x_{n-1} +\Delta x_n)^{2H-1} \big) \Delta x_{n-1}  \Delta x_n (\Delta x_n + \Delta x_{n-1} ),
    \end{align*}
    and $\E [f_N - f_{N-1}]^2 = \Delta x_{N-1}^{2H}$.
    Substituting these in the expectation of $\sigmasqcvest$ and using the fact that $\Delta x_n=\Theta(N^{-1})$ for all $n$ by quasi-uniformity we get
    \begin{equation*}
    \begin{split}
        \E \sigmasqcvest &= \frac{1}{N} \Bigg[ (x_1^{2H - 1} + \Delta x_1^{2H-1}- (x_1+\Delta x_1)^{2H - 1} ) \\
        &\hspace{1.2cm} + \sum_{n=2}^{N-1}  \left(\Delta x_{n-1}^{2H-1}  + \Delta x_n^{2H-1} - (\Delta x_{n-1} + \Delta x_n)^{2H-1} \right) + \Delta x_{N-1}^{2H - 1}\Bigg] \\
        &= \frac{1}{N} \Bigg[ \Delta x_1^{2H-1}\left(\left(\frac{x_1}{\Delta x_1}\right)^{2H - 1} + 1 - \left(\frac{x_1}{\Delta x_1^{2H-1}} + 1\right)^{2H - 1} \right) \\
        &\hspace{1.2cm} + \Delta x_n^{2H-1} \sum_{n=2}^{N-1}  \left(\left(\frac{\Delta x_{n-1}}{\Delta x_n}\right)^{2H-1}  + 1 - \left(\frac{\Delta x_{n-1}}{\Delta x_n} + 1\right)^{2H-1} \right) + \Delta x_{N-1}^{2H - 1}\Bigg] \\
        &\eqqcolon{}\frac{1}{N} \Bigg[ \Delta x_1^{2H-1} c_1 + \Delta x_n^{2H-1} \sum_{n=2}^{N-1} c_n + \Delta x_{N-1}^{2H - 1}\Bigg] .
    \end{split}
    \end{equation*}
    Notice that the function $x \mapsto x^{2H-1} + 1 - (x+1)^{2H-1}$ is positive for $x>0$ and $H \in (0, 1)$, and increasing for $H \in (1/2, 1)$ and non-increasing for $H \in (0, 1/2]$. By quasi-uniformity we have $C_\textup{qu}^{-1} \leq \Delta x_{n-1}/\Delta x_n \leq C_\textup{qu}$, and can bound $c_n$ for any $n$ and $N$ as
    \begin{equation*}
    \begin{split}
        0< C_\textup{qu}^{2H-1} + 1 - (C_\textup{qu}+1)^{2H-1} \leq c_n \leq C_\textup{qu}^{1-2H} + 1 - (C_\textup{qu}^{-1}+1)^{2H-1} \: &\text{ if } \:H \in (0, 1/2], \\
        0< C_\textup{qu}^{1-2H} + 1 - (C_\textup{qu}^{-1}+1)^{2H-1} \leq c_n \leq C_\textup{qu}^{2H-1} + 1 - (C_\textup{qu}+1)^{2H-1} \: &\text{ if } \: H \in (1/2, 1).
    \end{split}
    \end{equation*}
    Finally, by quasi-uniformity $\Delta x_n=\Theta(N^{-1})$, and $\E \sigmasqcvest=\Theta(N^{-2H}) + \Theta(N^{1-2H})+\Theta(N^{-2H})=\Theta(N^{1-2H})$.

    Suppose then that $l = 1$, in which case $f \sim \GP(0, k_{1, H})$ for the integrated fractional Brownian motion kernel $k_{1,H}$ in~\eqref{eq:iFBM-kernel-explicit}.
    It is straightforward (though, in the case of the second expectation, somewhat tedious) to compute that the expected values of squared terms in the expression \eqref{eq:expnasion-1056} for $\sigmasqcvest$ are
    \begin{equation*}
        \E [x_2 f_1 -  x_1 f_2  ]^2 = \frac{x_1 x_2 \Delta x_1 }{2(H+1)(2H+1)} \big( x_2^{2H+1} - x_1^{2H+1} - \Delta x_1^{2H+1} \big)
    \end{equation*}
    and
    \begin{equation} \label{eq:l=1-exp-2}
      \begin{split}
        \E \big[ \Delta &x_{n-1} (f_{n+1} - f_n) - \Delta x_n (f_n - f_{n-1}) \big]^2  \\
        ={}& \frac{\Delta x_n \Delta x_{n-1} (\Delta x_{n} + \Delta x_{n-1}) }{2(H+1)(2H+1)} \big[ (\Delta x_{n} + \Delta x_{n-1})^{2H+1} - \Delta x_n^{2H+1} - \Delta x_{n-1}^{2H+1} \big]
      \end{split}
    \end{equation}
    and
    \begin{equation*}
      \begin{split}
        \E [f_N - f_{N-1}]^2 = \frac{\Delta x_{N-1}}{2H+1} \bigg( x_N^{2H+1} - x_{N-1}^{2H+1} - \frac{1}{2(H+1)} \Delta x_{N-1}^{2H+1} \bigg).
      \end{split}
    \end{equation*}
    Therefore, by \eqref{eq:expnasion-1056},
    \begin{equation*}
      \begin{split}
        \E \sigmasqcvest ={}& \frac{\big( x_2^{2H+1} - x_1^{2H+1} - \Delta x_1^{2H+1} \big)}{2(H+1)(2H+1)N} \\
        &+ \frac{1}{2(H+1)(2H+1)N}\sum_{n=2}^{N-1} \big[ (\Delta x_{n} + \Delta x_{n-1})^{2H+1} - \Delta x_n^{2H+1} - \Delta x_{n-1}^{2H+1} \big] \\
        &+ \frac{1}{(2H+1)N} \bigg( x_N^{2H+1} - x_{N-1}^{2H+1} - \frac{1}{2(H+1)} \Delta x_{N-1}^{2H+1} \bigg) \\
        \eqqcolon{}& \frac{1}{2(H+1)(2H+1)} B_{1,N} + \frac{1}{2(H+1)(2H+1)} I_N + \frac{1}{(2H+1)} B_{2,N}.
      \end{split}
    \end{equation*}
    By quasi-uniformity, $B_{1,N} \leq N^{-1} x_2^{2H+1} = \bigo(N^{-2-2H})$.
    Consider then the interior term
    \begin{equation} \label{eq:interior-term-exp-cv-proof}
      \begin{split}
      I_N &= \frac{1}{N} \sum_{n=2}^{N-1} \Delta x_n^{2H+1} \Bigg[ \bigg(1 + \frac{\Delta x_{n-1}}{\Delta x_n} \bigg)^{2H+1} - \bigg(1 + \bigg(\frac{\Delta x_{n-1}}{\Delta x_n} \bigg)^{2H+1} \bigg) \Bigg] \\
      &\eqqcolon \frac{1}{N} \sum_{n=2}^{N-1} \Delta x_n^{2H+1} c'_n.
      \end{split}
    \end{equation}
    Because the function $x \mapsto (1 + x)^{2H+1} - (1 + x^{2H+1})$ is positive and increasing for $x > 0$ if $H \in (0, 1)$ and $C_\textup{qu}^{-1} \leq \Delta x_{n-1} / \Delta x_n \leq  C_\textup{qu}$ by quasi-uniformity, we have
    \begin{equation*}
      0 < (1 + C_\textup{qu}^{-1})^{2H+1} - (1 + C_\textup{qu}^{-(2H+1)}) \leq c'_n \leq \bigg(1 + \frac{\Delta x_{n-1}}{\Delta x_n} \bigg)^{2H+1} \leq (1 + C_\textup{qu})^{2H+1}
    \end{equation*}
    for every $n$.
    Because $N^{-1} \sum_{n=2}^{N-1} \Delta x_n^{2H+1} = \Theta(N^{-1-2H})$ by quasi-uniformity, we conclude from~\eqref{eq:interior-term-exp-cv-proof} that $I_N = \Theta(N^{-1-2H})$.
    For the last term $B_{2, N}$, recall that we have set $x_N = T$.
    Thus
    \begin{equation*}
      \begin{split}
        B_{2,N} &= \frac{1}{N} \bigg( T^{2H+1} - (T - \Delta x_{N-1})^{2H+1} - \frac{1}{2(H+1)} \Delta x_{N-1}^{2H+1} \bigg).
      \end{split}
    \end{equation*}
    By the generalised binomial theorem,
    \begin{equation*}
      T^{2H+1} - (T - \Delta x_{N-1})^{2H+1} = (2H+1) T^{2H} \Delta x_{N-1} + \bigo( \Delta x_{N-1}^2 )
    \end{equation*}
    as $\Delta x_{N-1} \to 0$.
    It follows that under quasi-uniformity we have $B_{2,N} = \Theta(N^{-2})$ for every $H \in (0, 1)$.
    Putting these bounds for $B_{1,N}$, $I_N$ and $B_{2,N}$ together we conclude that
    \begin{equation*}
        \begin{split}
        \E \sigmasqcvest &= \frac{1}{2(H+1)(2H+1)} B_{1,N} + \frac{1}{2(H+1)(2H+1)} I_N + \frac{1}{(2H+1)} B_{2,N} \\
        &= \bigo(N^{-2-2H}) + \Theta(N^{-1-2H}) + \Theta(N^{-2}),
        \end{split}
    \end{equation*}
    which gives $\E \sigmasqcvest = \Theta(N^{-1-2H})$ if $H \in (0, 1/2]$ and $\E \sigmasqcvest = \Theta(N^{-2})$ if $H \in [1/2, 1)$.
\end{proof}

Observe that in the proof of Theorem~\ref{res:holder-spaces-exp} it is the boundary term $B_{2,N}$ that determines the rate when there is sufficient smoothness, in that $l = 1$ and $H \in [1/2, 1)$.
Similar phenomenon occurs in the proof of Theorem~\ref{res:holder-spaces}.
The smoother a process is, the more correlation there is between its values at far-away points.
Because the Brownian motion (as well as fractional and integrated Brownian motions) has a zero boundary condition at $x = 0$ but no boundary condition at $x = T$ and no information is available at points beyond $T$, the importance of $B_{2,N}$ is caused by the fact that around $T$, the least information about the process is available.

\begin{proof}[Proof of Theorem~\ref{res:holder-spaces-exp-ml}]
  From~\eqref{eq:sigma-ml-w-deltas} we get
  \begin{equation*}
    \E \hat{\sigma}_\textup{ML}^2 = \frac{1}{N} \sum_{n=1}^N \frac{\E[f_n - f_{n-1}]^2}{\Delta x_{n-1}}.
  \end{equation*}
  We may then proceed as in the proof of Theorem~\ref{res:holder-spaces-exp} and use quasi-uniformity to show that
  \begin{equation*}
    \E \hat{\sigma}_\textup{ML}^2 = \frac{1}{N} \sum_{n=1}^N \frac{\E[f_n - f_{n-1}]^2}{\Delta x_{n-1}} = \frac{1}{N} \sum_{n=1}^N \frac{ \Delta x_{n-1}^{2H}}{\Delta x_{n-1}} = \frac{1}{N} \sum_{n=1}^N \Delta x_{n-1}^{2H-1} = \Theta(N^{1-2H})
  \end{equation*}
  when $l = 0$ and
  \begin{equation*}
    \begin{split}
      \E \hat{\sigma}_\textup{ML}^2 &= \sum_{n=1}^N \frac{\E[f_n - f_{n-1}]^2}{\Delta x_{n-1}} \\
      &= \frac{1}{(2H+1)N} \sum_{n=1}^N \bigg( x_n^{2H+1} - x_{n-1}^{2H+1} - \frac{1}{2(H+1)} \Delta x_{n-1}^{2H+1} \bigg) \\
      &= \frac{1}{(2H+1)N} \sum_{n=1}^N \bigg( (2H+1) x_n^{2H} \Delta x_{n-1} + \bigo(\Delta x_{n-1}^2) - \frac{1}{2(H+1)} \Delta x_{n-1}^{2H+1} \bigg) \\
      &= \Theta(N^{-1})
      \end{split}
  \end{equation*}
  when $l = 1$.
\end{proof}
\subsection{Proofs for~\Cref{sec:icv-estimators}} \label{sec:proofs-icv}

For the Brownian motion kernel, the ICV estimator defined in~\eqref{eq:icv-estimator} takes the explicit form
\begin{equation*}
    \sigmasqicvest = \frac{1}{N} \sum_{n=2}^{N-1}  \frac{( \Delta x_{n-1} [f_{n+1} - f_n] - \Delta x_n [f_n - f_{n-1}] )^2}{ (\Delta x_n + \Delta x_{n-1}) \Delta x_n \Delta x_{n-1} }.
\end{equation*}
We analyse this estimator below.

\begin{proof}[Proof of Theorem~\ref{res:holder-spaces-icv}]

The proof of~\Cref{res:holder-spaces} shows that when $l=1$ and $\alpha \in (1/2, 1]$, the bound is dominated by the bound on the boundary terms, $B_{1,N}=\bigo(N^{-2})$ and $B_{2,N}=\bigo(N^{-2})$, since
\begin{equation*}
    \sigmasqcvest = B_{1,N} + I_N + B_{2,N} = \bigo(N^{-2}) + \bigo(N^{-1-2\alpha}) + \bigo(N^{-2}) = \bigo(N^{-2}).
\end{equation*}
As $\sigmasqicvest = I_N$, it follows that $\sigmasqicvest = \bigo(N^{-1-2\alpha})$ when $l=1$.
\end{proof}

\begin{proof}[Proof of Theorem~\ref{res:holder-spaces-icv-exp}]

The proof of~\Cref{res:holder-spaces-exp} shows that when $l=1$ and $H \in [1/2, 1)$, the bound is dominated by the bound on the right boundary terms, $B_{2,N}=\Theta(N^{-2})$, since
\begin{equation*}
\begin{split}
    \E \sigmasqcvest &= \frac{1}{2(H+1)(2H+1)} B_{1,N} + \frac{1}{2(H+1)(2H+1)} I_N + \frac{1}{(2H+1)} B_{2,N} \\
        &= \bigo(N^{-2-2H}) + \Theta(N^{-1-2H}) + \Theta(N^{-2})
\end{split}
\end{equation*}
As $\E \sigmasqicvest = I_N/(2(H+1)(2H+1))$, it follows that $\sigmasqicvest = \Theta(N^{-1-2H})$ when $l=1$.
\end{proof}

\subsection{Proofs for \Cref{sec:discussion}}
\label{sec:proofs-discussion}

\begin{proof}[Proof of \Cref{res:uq-theorem-exp}]
  We only provide the proof for the case $l = 1$ and leave the simpler case $l = 0$ to the reader.
  Let $x \in (x_{n-1}, x_n)$.
  From the expression for $m_N$ in \Cref{sec:explicit-post-mean-cov}, we get
  \begin{equation*}
    \begin{split}
      \E[ f(x) - m_N(x) ]^2 &= \E \Bigg[ f(x) - \frac{(x_n - x) f(x_{n-1}) + (x - x_{n-1}) f(x_n)}{\Delta x_{n-1}} \Bigg]^2 \\
      &= \frac{1}{\Delta x_{n-1}^2} \E\big[ (x - x_{n-1})(f(x_n) - f(x)) - (x_n - x)(f(x) - f(x_{n-1})) \big]^2.
      \end{split}
  \end{equation*}
  Then, we can use~\eqref{eq:l=1-exp-2} with $x_n$ instead of $x_{n+1}$ and $x$ instead of $x_n$ to get
  \begin{equation*}
      \E[ f(x) - m_N(x) ]^2 = \frac{(x_n - x)(x - x_{n-1})}{C_H \Delta x_{n-1}} \big[ \Delta x_{n-1}^{2H+1} - (x_n - x)^{2H+1} - (x - x_{n-1})^{2H+1} \big],
  \end{equation*}
  where $C_H = 2(H+1)(2H+1)$.
  The expression for $k_N$ in \Cref{sec:explicit-post-mean-cov} gives
  \begin{equation*}
    \frac{\E[ f(x) - m_N(x) ]^2}{k_N(x)} = \frac{1}{C_H} \big[ \Delta x_{n-1}^{2H+1} - (x_n - x)^{2H+1} - (x - x_{n-1})^{2H+1} \big].
  \end{equation*}

  By removing the negative terms and using the quasi-uniformity \eqref{eq:quasi-uniformity-2}, we obtain
  \begin{equation*}
      \sup_{x \in [0, T]} \frac{\E[ f(x) - m_N(x) ]^2}{k_N(x)} \leq \frac{ (T C_\textup{qu})^{2H+1} }{C_H} N^{-1-2H},
  \end{equation*}
  To see that this bound is tight, observe that for the midpoint $x = (x_n + x_{n-1}) / 2$ we have $x_n - x = x - x_{n-1} = \Delta x_{n-1} / 2$ and
  \begin{equation*}
    \frac{ \E[ f(x) - m_N(x) ]^2 }{ k_N(x) } = \frac{1}{ C_H} \bigg(1 - \frac{1}{2^{2H}} \bigg) \Delta x_{n-1}^{2H+1} \geq \frac{T^{2H + 1}}{C_H C_\textup{qu}^{2H + 1}} \bigg(1 - \frac{1}{2^{2H}} \bigg) N^{-1-2H}
  \end{equation*}
  by the quasi-uniformity.
  Therefore
  \begin{equation*}
    \sup_{x \in [0, T]} \frac{\E[ f(x) - m_N(x) ]^2}{k_N(x)} = \Theta(N^{-1-2H})
  \end{equation*}
  when $l = 1$.
  It can be similarly shown that
  \begin{equation*}
    \sup_{x \in [0, T]} \frac{\E[ f(x) - m_N(x) ]^2}{k_N(x)} = \Theta(N^{1-2H})
  \end{equation*}
  when $l = 0$.
  The claims then follow from the rates for $\E\sigmasqcvest$ and $\E\sigmasqmlest$ in \Cref{res:holder-spaces-exp,res:holder-spaces-exp-ml}.
\end{proof}

\section*{Acknowledgements}
The authors are grateful to the anonymous reviewers and the Associate Editor for their time and effort in reviewing the manuscript, and the comments and suggestions that improved the paper. In particular, one of the reviewers suggested removal of the rightmost boundary point in the CV estimator, which led to the introduction on the interior cross-validation estimator.
MN acknowledges support from the U.K. Research and Innovation under grant
number EP/S021566/1.
MK has been supported by the French government, through the 3IA Cote d’Azur Investment in the Future Project managed by the National Research Agency (ANR) with the reference number ANR-19-P3IA-0002.
TK was supported by the Academy of Finland postdoctoral researcher grant \#338567 ``Scalable, adaptive and reliable probabilistic integration''.
Part of this research was carried out during a visit by TK to EURECOM in May 2023 that was funded by the Institut fran\c{c}ais de Finlande, the Embassy of France to Finland, and the Finnish Society of Sciences and Letters.
MM gratefully acknowledges financial support by the European Research Council through ERC StG Action 757275 / PANAMA; the DFG Cluster of Excellence “Machine Learning - New Perspectives for Science”, EXC 2064/1, project number 390727645; the German Federal Ministry of Education and Research (BMBF) through the T\"{u}bingen AI Center (FKZ: 01IS18039A); and funds from the Ministry of Science, Research and Arts of the State of Baden-W\"{u}rttemberg.
\clearpage

\appendix

\section{Connection between the ML and CV estimators} \label{sec:proofs-ml-cv-relation}

Here we prove a connection between the ML and CV estimators; see \Cref{remark:ml-cv}.
Let
\begin{equation*}
  C(N, p) = \binom{N}{p} = \frac{N!}{p! (N - p)!}
\end{equation*}
denote the binomial coefficient.
The leave-$p$-out cross-validation (LPO-CV) estimator of $\sigma^2$ is
\begin{equation}
\label{eq:leave-p-out-general}
  \hat{\sigma}_{\textup{CV}(p)}^2 = \frac{1}{C(N,p)} \sum_{i=1}^{C(N, p)} \frac{1}{p} \sum_{n=1}^p \frac{[f(x_{p,i,n}) - m_{\setminus \{p, i\}}(x_{p,i,n})]^2}{ k_{\setminus \{p, i\}}(x_{p,i, n})},
\end{equation}
where $i$ indexes the $N$-choose-$p$ possible sets of held-out datapoints, $\bx_{\setminus \{p,i\}}$, among $\bx$ and $n \leq p$ the data points left out of each of these sets. That is, for each $p$ and $i$ we have
\begin{equation*}
  \bx = \bx_{\setminus \{p, i\}} \cup \{x_{p,i,1}, \ldots, x_{p,i,p}\}.
\end{equation*}
The functions $m_{\setminus \{p, i\} }$ and $k_{\setminus \{p, i\} }$ are the GP conditional mean and variance based on the set $\bx_{\setminus \{p,i\}}$, which contains $N-p$ points.
The purpose of this section is to prove that
\begin{equation} \label{eq:ml-lpo-cv-connection}
    \hat{\sigma}_\textup{ML}^2 = \frac{1}{N} \sum_{p=1}^N \hat{\sigma}_{\textup{CV}(p)}^2.
\end{equation}

  Denote $\nu(\bx) = f(\bx)^\top k(\bx, \bx)^{-1} f(\bx)$.
  The block matrix inversion formula applied to $g(\bx_{\setminus\{p,i\}} \cup \{x\})$ and the equations in \Cref{sec:par-est-for-gps} for the conditional mean and variance yield
  \begin{equation} \label{eq:rkhs-norm-recursion}
      \frac{[f(x) - m_{\setminus\{p,i\}}(x)]^2}{k_{\setminus\{p,i\}}(x)} = \nu( \bx_{\setminus\{p,i\}} \cup \{x\} ) - \nu( \bx_{\setminus\{p,i\}} )
  \end{equation}
  for any $1 \leq p \leq N$ and $x \notin \bx_{\{p,i\}}$, where we use the convention $\nu(\bx_{\setminus\{N,i\}}) = \nu(\emptyset) = 0$.
  For each $1 \leq p \leq N$, $i \leq C(N, p)$ and $n \leq p$ there is a unique index $j(p, i, n) \leq C(N, p-1)$ such that
  \begin{equation} \label{eq:set-spawning}
    \bx_{\setminus\{p,i\}} \cup \{x_{p,i,n}\} = \bx_{\setminus\{p-1, j(p,i,n)\}}.
  \end{equation}
  Setting $x = x_{p,i,n}$ in~\eqref{eq:rkhs-norm-recursion} gives
  \begin{equation*}
    \frac{[f(x_{p,i,n}) - m_{\setminus\{p,i\}}(x_{p,i,n})]^2}{k_{\setminus\{p,i\}}(x_{p,i,n})} = \nu( \bx_{\setminus\{p,i\}} \cup\{x_{p,i,n}\}) - \nu( \bx_{\setminus\{p,i\}} ).
  \end{equation*}
  Therefore
  \begin{equation} \label{eq:leave-p-out-average-1}
    \begin{split}
      \sum_{p=1}^N \hat{\sigma}_{\textup{CV}(p)}^2 &= \frac{1}{N}\sum_{p=1}^N \frac{1}{C(N,p)} \sum_{i=1}^{C(N, p)} \frac{1}{p} \sum_{n=1}^p \frac{[f(x_{p,i,n}) - m_{\setminus \{p, i\}}(x_{p,i,n})]^2}{ k_{\setminus \{p, i\}}(x_{p,i, n})} \\
      &= \sum_{p=1}^N \frac{1}{C(N,p)} \sum_{i=1}^{C(N, p)} \frac{1}{p} \sum_{n=1}^p \big[ \nu( \bx_{\setminus\{p,i\}} \cup\{x_{p,i,n}\}) - \nu( \bx_{\setminus\{p,i\}} ) \big].
      \end{split}
  \end{equation}
  By~\eqref{eq:set-spawning} from each set $\bx_{\setminus\{p,i\}}$ on level $p$ (i.e., sets from which $p$ points have been left out), $p$ sets on level $p - 1$ can be obtained by adding one of the left-out datapoints.
  However, there are $C(N,p)$ sets on level $p$ and $C(N, p-1)$ sets on level $p-1$.
  Hence for each set $\bx_{\setminus\{p-1,j\}}$ on level $p-1$ there are
  \begin{equation*}
    p \cdot \frac{C(N,p)}{C(N,p-1)} = p \cdot \frac{N!(p-1)!(N-p+1)!}{N!p!(N-p)!} = N-p+1
  \end{equation*}
  combinations of sets $\bx_{\setminus\{p,i\}}$ on level $p$ and points $x_{p,i,n}$ left out of these sets such that $\bx_{\setminus\{p,i\}} \cup \{x_{p,i,n}\} = \bx_{\setminus\{p-1,j\}}$.
  Therefore
  \begin{equation*}
    \begin{split}
      \sum_{i=1}^{C(N, p)} &\frac{1}{p} \sum_{n=1}^p \big[ \nu( \bx_{\setminus\{p,i\}} \cup\{x_{p,i,n}\}) - \nu( \bx_{\setminus\{p,i\}} ) \big] \\
      &= \sum_{i=1}^{C(N, p)} \frac{1}{p} \sum_{n=1}^p \nu( \bx_{\setminus\{p,i\}} \cup\{x_{p,i,n}\}) - \sum_{i=1}^{C(N, p)} \frac{1}{p} \sum_{n=1}^p \nu( \bx_{\setminus\{p,i\}} ) \\
      &= \frac{N-p+1}{p} \sum_{j=1}^{C(N, p-1)} \nu( \bx_{\setminus\{p-1,j\}}) - \sum_{i=1}^{C(N, p)} \nu( \bx_{\setminus\{p,i\}} )
    \end{split}
  \end{equation*}
  and consequently~\eqref{eq:leave-p-out-average-1} writes
  \begin{equation*}
    \begin{split}
      \sum_{p=1}^N \hat{\sigma}_{\textup{CV}(p)}^2 &= \sum_{p=1}^N \frac{1}{C(N,p)} \Bigg[ \frac{N - p + 1}{p} \sum_{j=1}^{C(N, p-1)} \nu( \bx_{\setminus\{p-1,j\}}) - \sum_{i=1}^{C(N,p)} \nu( \bx_{\setminus\{p,i\}} ) \Bigg] \\
      &= \sum_{p=1}^N \Bigg[ \frac{1}{C(N,p-1)} \sum_{j=1}^{C(N, p-1)} \nu( \bx_{\setminus\{p-1,j\}}) - \frac{1}{C(N,p)} \sum_{i=1}^{C(N,p)} \nu( \bx_{\setminus\{p,i\}} ) \Bigg],
    \end{split}
  \end{equation*}
  which is a telescoping sum.
  We are left with
  \begin{equation*}
    \sum_{p=1}^N \hat{\sigma}_{\textup{CV}(p)}^2 = \frac{1}{C(N, 0)} \sum_{j=1}^{C(N,0)} \nu( \bx_{\setminus\{0,j\}}) - \frac{1}{C(N,N)} \sum_{i=1}^{C(N,N)} \nu( \bx_{\setminus\{N,i\}} ),
  \end{equation*}
  where $\nu( \bx_{\setminus\{0,j\}}) = f(\bx)^\top k(\bx, \bx)^{-1} f(\bx)$ and $\nu( \bx_{\setminus\{N,i\}} ) = \nu(\emptyset) = 0$.
  Thus
  \begin{equation*}
    \frac{1}{N} \sum_{p=1}^N \hat{\sigma}_{\textup{CV}(p)}^2 = \frac{f(\bx)^\top k(\bx, \bx)^{-1} f(\bx)}{N} = \sigmasqmlest,
  \end{equation*}
  which establishes~\eqref{eq:ml-lpo-cv-connection}.

\section{Further discussion on~\Cref{res:fqv-estimator} }
\label{sec:discussion-thm-fqv-estimator}
The requirement of having the same $V^2(f)$ for all sequences of partitions quasi-uniform with constant $2$ can be relaxed somewhat: trivially, it is sufficient that the quadratic variation is $V^2(f)$ specifically with respect to even-points and odd-points sequences of sub-partitions used in the proof in \Cref{sec:proofs-deterministic}. Furthermore, we may even have different quadratic variations with respect to said sequences. Then the results becomes
\begin{equation*} \label{eq:liminf-limsup-v2f-generalisation}
   \lim_{N \to \infty} \hat{\sigma}^2_\mathrm{CV} = \frac{\nu}{T} \qquad \text{for} \qquad \nu = \frac{ V_0^2(f) + V_1^2(f)}{2},
\end{equation*}
where $V_0^2(f)$ and $V_1^2(f)$ are quadratic variations with respect to the even- and odd-points sub-partitions respectively, meaning that
\begin{align*}
  V^2(f) &= \lim_{N \to \infty} \sum_{n=1}^{N-1} (f_{n+1} - f_{n} )^2, \\
  V_0^2(f) &= \lim_{N \to \infty} \sum_{n=1}^{\lfloor \frac{N-2}{2} \rfloor} (f_{2n+2} - f_{2n} )^2, \\
  V_1^2(f) &= \lim_{N \to \infty} \sum_{n=1}^{\lfloor \frac{N-1}{2} \rfloor} (f_{2n+1} - f_{2n-1} )^2.
\end{align*}

\section{Explicit expression for the leave-$p$-out estimator}
\label{sec:explicit_expression_for_leave_p_out}

Using the expressions for posterior mean and covariance functions in \eqref{eq:explicit-post-mean} and~\eqref{eq:explicit-post-cov}, we may derive an explicit expression for the leave-$p$-out cross-validation (LPO-CV) estimator of the scale parameter, given in~\eqref{eq:leave-p-out-general} by
\begin{equation*}
  \hat{\sigma}_{\textup{CV}(p)}^2 = \frac{1}{C(N,p)} \sum_{i=1}^{C(N, p)} \frac{1}{p} \sum_{n=1}^p \frac{[f(x_{p,i,n}) - m_{\setminus \{p, i\}}(x_{p,i,n})]^2}{ k_{\setminus \{p, i\}}(x_{p,i, n})}.
\end{equation*}
The expression is less straightforward than that for $p=1$. Denote by $x_{\lfloor p,i,n \rfloor}$ the largest point in the set $\bx_{\setminus \{p,i\}} = \bx \setminus \{x_{p,i,1}, \ldots, x_{p,i,p}\}$ that does not exceed $x_{p,i,n}$, and by $x_{\lceil p,i,n \rceil}$ the smallest point in the set $\bx_{\setminus \{p,i\}}$ that exceeds $x_{p,i,n}$. Through somewhat cumbersome arithmetic derivations it can be shown that the estimator takes the form
\begin{align*}
  \hat{\sigma}_{\textup{CV}(p)}^2 = \frac{1}{C(N,p)} \sum_{i=1}^{C(N, p)} \bigg[  B_{p,i,1} + \sum_{n=2}^{p-1} I_{p,i,n} + B_{p,i,p} \bigg]
\end{align*}
where, for $\Delta x^-_{p,i,n} = (x_{p,i,n} - x_{\lfloor p,i,n \rfloor})$ and $\Delta x^+_{p,i,n} = (x_{ \lceil p,i,n \rceil } - x_{p,i,n})$, the inner term is
\begin{equation*}
    I_{p,i,n} = \frac{\Delta x^-_{p,i,n}(f_{\lceil p,i,n \rceil} - f_{p,i,n }) - \Delta x^+_{p,i,n} (f_{p,i,n} - f_{\lfloor p,i,n \rfloor})}{ (\Delta x^+_{p,i,n} + \Delta x^-_{p,i,n}) \Delta x^+_{p,i,n} \Delta x^-_{p,i,n}},
\end{equation*}
and the boundary terms $B_{p,i,1}$ and $B_{p,i,p}$ depend on whether the $i$\textsuperscript{th} set contains $x_1$ or $x_N$, respectively. Specifically,
\begin{align*}
    B_{p,i,1} &= \begin{cases}
    \frac{(x_{ \lceil p,i,1 \rceil} f_{p,i,1} -  x_{p,i,1} f_{ \lceil p,i,1 \rceil} )^2}{ x_{ p,i,1 } x_{ \lceil p,i,1 \rceil} \Delta x^+_{p,i,1} } & \text{if the $i$\textsuperscript{th} set contains $x_1$,}\\
    I_{p,i,1} & \text{otherwise,}
    \end{cases} \\
    B_{p,i,p} &= \begin{cases}
    \frac{(f_{p,i,p} - f_{ \lfloor p,i,p \rfloor})^2}{ \Delta x^-_{p,i,p} } & \text{if the $i$'th set contains $x_N$,}\\
    I_{p,i,p} & \text{otherwise.}
    \end{cases}
\end{align*}
Though more cumbersome, it may be feasible to conduct convergence analysis similar to that in~\Cref{sec:limit-behaviour-for-sigma} for $\hat{\sigma}_{\textup{CV}(p)}^2$. We leave this up to future work.

\section{Comparison of CV and ML estimators for Mat\'ern kernels}
\label{sec:cv-vs-ml-sobolev}

A natural next step is extending the analysis to kernels whose reproducing kernel Hilbert spaces (RKHSs) are norm-equivalent to Sobolev spaces, such as the commonly used Mat\'ern kernels. The ML estimator for Mat\'ern kernels was analysed in~\citet{Karvonen2020}.
Their experiments in Section~5.1 suggest that, for $x_+ \coloneqq \max(x, 0)$,
\begin{equation}
\label{eq:karvonen2020mle}
    \sigmasqmlest = \Theta(N^{2(\nu_\mathrm{model} - 2 \nu_\mathrm{true})_+ -1})
\end{equation}
when $k_{\nu_\mathrm{model}}$ is a Mat\'ern kernel of order $\nu_\mathrm{model}$ and $f$ is a finite linear combination of the form $f = \sum_{i=1}^m \alpha_i k_{\nu_\mathrm{true}}(\cdot, x_i)$ for some $m \in \mathbb N$, $\alpha_i \in \mathbb R$, $x_i \in [0, 1]$, and the Mat\'ern kernel $k_{\nu_\mathrm{true}}$ of order $\nu_\mathrm{true}$.
Empirically, we compare this to the rate of the CV estimator in~\Cref{fig:cv-vs-ml-sobolev}. The test functions $f$ are posterior means of a GP with the $k_{\nu_\mathrm{true}}$ kernel conditioned on points $\{(x_1, y_1), \dots, (x_{10}, y_{10})\}$, where each $x_i$ and $y_i$ is sampled i.i.d from the uniform distribution on $[0,1]$. Since such $f$ are of the form $f = \sum_{i=1}^{10} \alpha_i k_{\nu_\mathrm{true}}(\cdot, x_i)$, we expect the MLE rate in~\eqref{eq:karvonen2020mle} to apply; we use experimental data and the results in~\Cref{res:holder-spaces,res:holder-spaces-exp-ml} to hypothesise what the rate in each individual example is.
Similarly to the observations for the Brownian motion kernel, we see that the CV estimator adapts to the smoothness of the true function over a larger ranger of smoothness compared to the ML estimator. For instance, for $\nu_\mathrm{model}=1$, the experimental results suggest that the dependence of rate on $\nu_\mathrm{true}$ is as illustrated in~\Cref{fig:sobolev-fig}. While the CV and the ML estimators adapt to the function smoothness when $\nu_\mathrm{true} \leq 1/2$, for $\nu_\mathrm{true} \in [1/2, 3/4]$ only the CV estimator continues adapting to the smoothness. This implies the CV estimator is less likely to become asymptotically overconfident in the event of undersmoothing.
\begin{figure}
    \centering
    \includegraphics[width=0.8\textwidth]{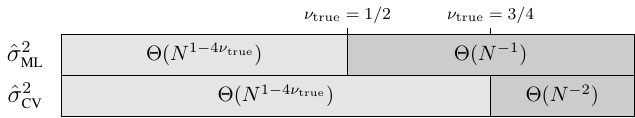}
    \caption{Rates of decay for the ML and CV estimators for the Mat\'ern kernel of order $1$, and a true function that is a linear combination of Mat\'ern kernels of order $\nu_\mathrm{true}$. The ML rate is given in~\citet[Equation 5.2]{Karvonen2020}. The CV rate is empirically observed in~\Cref{fig:cv-vs-ml-sobolev}.
    Observe that the CV estimator's range of adaptation to the smoothness $\nu_\mathrm{true}$ is wider than the ML estimator's.
    }
    \label{fig:sobolev-fig}
\end{figure}

\begin{figure}
    \centering
    \includegraphics[width=\textwidth]{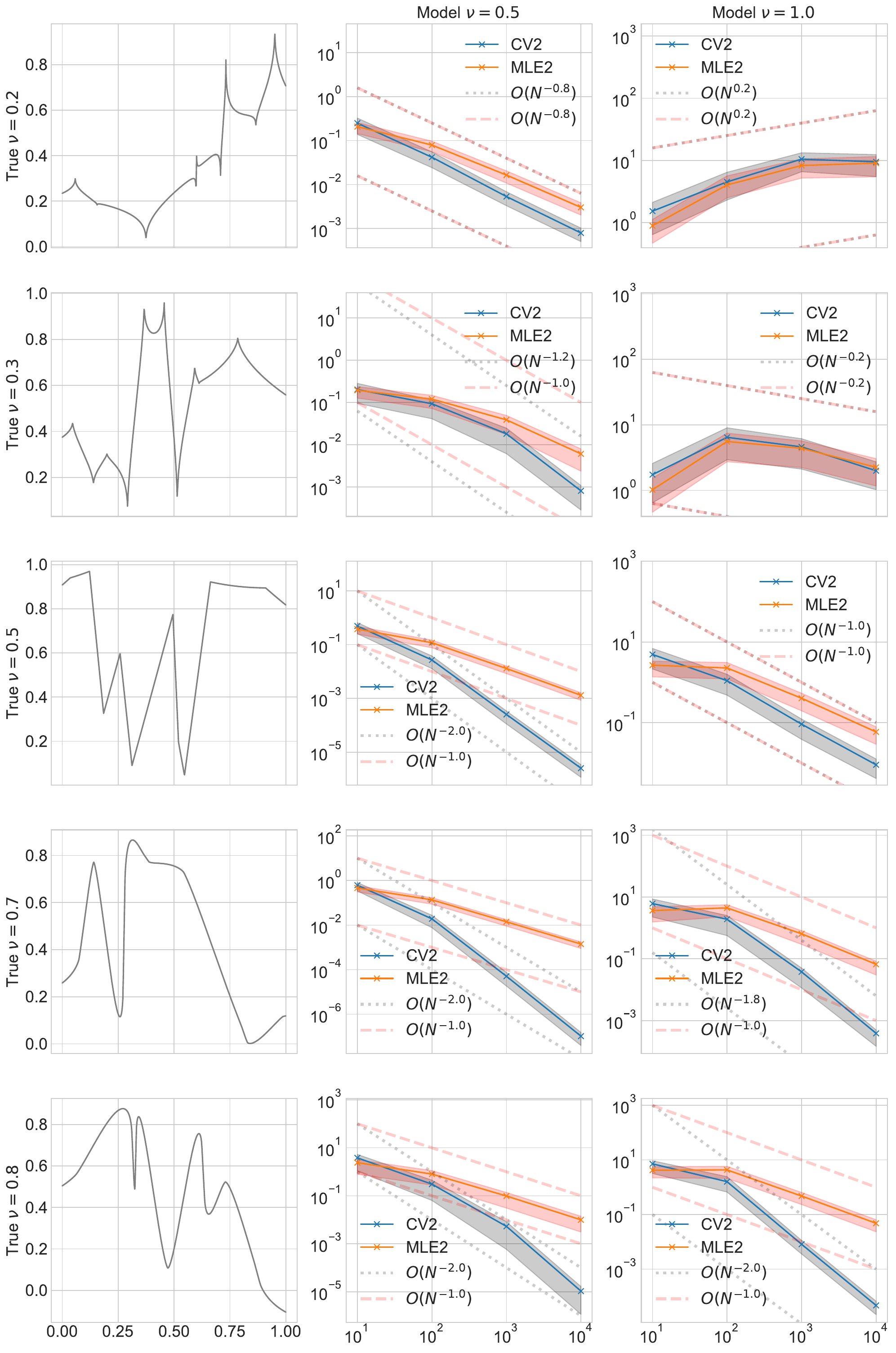}
    \caption{Asymptotics of CV estimator compared to asymptotics of the ML estimator, for the Mat\'ern kernel $\nu_\mathrm{model}$, and a true function that is a finite linear combination of Mat\'ern kernels $\nu_\mathrm{true}$.}
    \label{fig:cv-vs-ml-sobolev}
\end{figure}

\bibliography{references}

\begin{thebibliography}{}

\bibitem[Anderes, 2010]{Anderes2010}
Anderes, E. (2010).
\newblock On the consistent separation of scale and variance for {G}aussian
  random fields.
\newblock {\em The Annals of Statistics}, 38(2):870--893.

\bibitem[Bachoc, 2013]{Bachoc2013}
Bachoc, F. (2013).
\newblock Cross validation and maximum likelihood estimations of
  hyper-parameters of {G}aussian processes with model misspecification.
\newblock {\em Computational Statistics \& Data Analysis}, 66:55--69.

\bibitem[Bachoc, 2018]{bachoc2018asymptotic}
Bachoc, F. (2018).
\newblock Asymptotic analysis of covariance parameter estimation for {G}aussian
  processes in the misspecified case.
\newblock {\em Bernoulli}, 24(2):1531--1575.

\bibitem[Bachoc et~al., 2020]{bachoc2020asymptotic}
Bachoc, F., Betancourt, J., Furrer, R., and Klein, T. (2020).
\newblock Asymptotic properties of the maximum likelihood and cross validation
  estimators for transformed {G}aussian processes.
\newblock {\em Electronic Journal of Statistics}, 14(1):1962--2008.

\bibitem[Bachoc et~al., 2017]{Bachoc2017}
Bachoc, F., Lagnoux, A., and Nguyen, T. M.~N. (2017).
\newblock Cross-validation estimation of covariance parameters under
  fixed-domain asymptotics.
\newblock {\em Journal of Multivariate Analysis}, 160:42--67.

\bibitem[Beck and Guillas, 2016]{beck2016sequential}
Beck, J. and Guillas, S. (2016).
\newblock Sequential design with mutual information for computer experiments
  ({MICE}): Emulation of a tsunami model.
\newblock {\em SIAM/ASA Journal on Uncertainty Quantification}, 4(1):739--766.

\bibitem[{Ben Salem} et~al., 2019]{BenSalem2019}
{Ben Salem}, M., Bachoc, F., Roustant, O., Gamboa, F., and Tomaso, L. (2019).
\newblock {G}aussian process-based dimension reduction for goal-oriented
  sequential design.
\newblock {\em SIAM/ASA Journal on Uncertainty Quantification},
  7(4):1369--1397.

\bibitem[Bevilacqua et~al., 2019]{Bevilacqua2019}
Bevilacqua, M., Faouzi, T., Furrer, R., and Porcu, E. (2019).
\newblock Estimation and prediction using generalized {W}endland covariance
  functions under fixed domain asymptotics.
\newblock {\em The Annals of Statistics}, 47(2):828--856.

\bibitem[Chen et~al., 2021]{Chen2021}
Chen, Y., Owhadi, H., and Stuart, A.~M. (2021).
\newblock Consistency of empirical {B}ayes and kernel flow for hierarchical
  parameter estimation.
\newblock {\em Mathematics of Computation}, 90(332):2527--2578.

\bibitem[Cockayne et~al., 2019]{cockayne2019bayesian}
Cockayne, J., Oates, C.~J., Sullivan, T.~J., and Girolami, M. (2019).
\newblock Bayesian probabilistic numerical methods.
\newblock {\em SIAM Review}, 61(4):756--789.

\bibitem[Cont and Bas, 2023]{ContBas2023}
Cont, R. and Bas, P. (2023).
\newblock Quadratic variation and quadratic roughness.
\newblock {\em Bernoulli}, 29(1):496--522.

\bibitem[Diaconis, 1988]{Diaconis1988}
Diaconis, P. (1988).
\newblock Bayesian numerical analysis.
\newblock In {\em Statistical Decision Theory and Related Topics IV}, volume~1,
  pages 163--175. Springer-Verlag New York.

\bibitem[Du et~al., 2009]{Du2009}
Du, J., Zhang, H., and Mandrekar, V.~S. (2009).
\newblock Fixed-domain asymptotic properties of tapered maximum likelihood
  estimators.
\newblock {\em The Annals of Statistics}, 37(6A):3330--3361.

\bibitem[Dudley, 1973]{10.2307/2959347}
Dudley, R.~M. (1973).
\newblock Sample functions of the {G}aussian process.
\newblock {\em The Annals of Probability}, 1(1):66--103.

\bibitem[Fong and Holmes, 2020]{FongHolmes2020}
Fong, E. and Holmes, C.~C. (2020).
\newblock On the marginal likelihood and cross-validation.
\newblock {\em Biometrika}, 107(2):489--496.

\bibitem[Garnett, 2023]{garnett2023bayesian}
Garnett, R. (2023).
\newblock {\em Bayesian Optimization}.
\newblock Cambridge University Press.

\bibitem[Ginsbourger and Sch{\"a}rer, 2021]{ginsbourger2021fast}
Ginsbourger, D. and Sch{\"a}rer, C. (2021).
\newblock Fast calculation of gaussian process multiple-fold cross-validation
  residuals and their covariances.
\newblock {\em arXiv preprint arXiv:2101.03108}.

\bibitem[Gu et~al., 2018]{gu2018robust}
Gu, M., Wang, X., and Berger, J.~O. (2018).
\newblock Robust gaussian stochastic process emulation.
\newblock {\em The Annals of Statistics}, 46(6A):3038--3066.

\bibitem[Hadji and Szab\'{o}, 2021]{HadjiSzabo2021}
Hadji, A. and Szab\'{o}, B. (2021).
\newblock Can we trust {B}ayesian uncertainty quantification from {G}aussian
  process priors with squared exponential covariance kernel?
\newblock {\em SIAM/ASA Journal on Uncertainty Quantification}, 9(1):185--230.

\bibitem[Hennig et~al., 2015]{hennig2015probabilistic}
Hennig, P., Osborne, M.~A., and Girolami, M. (2015).
\newblock Probabilistic numerics and uncertainty in computations.
\newblock {\em Proceedings of the Royal Society A: Mathematical, Physical and
  Engineering Sciences}, 471(2179):20150142.

\bibitem[Hennig et~al., 2022]{pnbook2022}
Hennig, P., Osborne, M.~A., and Kersting, H.~P. (2022).
\newblock {\em Probabilistic Numerics: Computation as Machine Learning}.
\newblock Cambridge University Press.

\bibitem[Jones et~al., 1998]{jones1998efficient}
Jones, D.~R., Schonlau, M., and Welch, W.~J. (1998).
\newblock Efficient global optimization of expensive black-box functions.
\newblock {\em Journal of Global Optimization}, 13(4):455.

\bibitem[Karvonen, 2021]{karvonen2021estimation}
Karvonen, T. (2021).
\newblock Estimation of the scale parameter for a misspecified gaussian process
  model.
\newblock {\em arXiv preprint arXiv:2110.02810}.

\bibitem[Karvonen, 2023]{Karvonen2022}
Karvonen, T. (2023).
\newblock Asymptotic bounds for smoothness parameter estimates in {G}aussian
  process interpolation.
\newblock {\em SIAM/ASA Journal on Uncertainty Quantification},
  11(4):1225--1257.

\bibitem[Karvonen and Oates, 2023]{KarvonenOates2023}
Karvonen, T. and Oates, C.~J. (2023).
\newblock Maximum likelihood estimation in {G}aussian process regression is
  ill-posed.
\newblock {\em Journal of Machine Learning Research}, 24(120):1--47.

\bibitem[Karvonen et~al., 2020]{Karvonen2020}
Karvonen, T., Wynne, G., Tronarp, F., Oates, C.~J., and Särkkä, S. (2020).
\newblock Maximum likelihood estimation and uncertainty quantification for
  {G}aussian process approximation of deterministic functions.
\newblock {\em SIAM/ASA Journal on Uncertainty Quantification}, 8(3):926--958.

\bibitem[Kaufman and Shaby, 2013]{Kaufman2013}
Kaufman, C.~G. and Shaby, B.~A. (2013).
\newblock The role of the range parameter for estimation and prediction in
  geostatistics.
\newblock {\em Biometrika}, 100(2):473--484.

\bibitem[Kennedy and O'Hagan, 2001]{kennedy2001bayesian}
Kennedy, M.~C. and O'Hagan, A. (2001).
\newblock Bayesian calibration of computer models.
\newblock {\em Journal of the Royal Statistical Society: Series B (Statistical
  Methodology)}, 63(3):425--464.

\bibitem[K{\i}l{\i}{\c{c}}, 2008]{kilicc2008explicit}
K{\i}l{\i}{\c{c}}, E. (2008).
\newblock Explicit formula for the inverse of a tridiagonal matrix by backward
  continued fractions.
\newblock {\em Applied Mathematics and Computation}, 197(1):345--357.

\bibitem[Loh, 2005]{Loh2005}
Loh, W.-L. (2005).
\newblock Fixed-domain asymptotics for a subclass of {M}atérn-type {G}aussian
  random fields.
\newblock {\em The Annals of Statistics}, 33(5):2344--2394.

\bibitem[Loh and Kam, 2000]{LohKam2000}
Loh, W.-L. and Kam, T.-K. (2000).
\newblock Estimating structured correlation matrices in smooth {G}aussian
  random field models.
\newblock {\em Annals of Statistics}, 28(3):880--904.

\bibitem[Loh and Sun, 2023]{LohSun2023}
Loh, W.-L. and Sun, S. (2023).
\newblock Estimating the parameters of some common {G}aussian random fields
  with nugget under fixed-domain asymptotics.
\newblock {\em Bernoulli}, 29(3):2519--2534.

\bibitem[Loh et~al., 2021]{LohSunWen2021}
Loh, W.-L., Sun, S., and Wen, J. (2021).
\newblock On fixed-domain asymptotics, parameter estimation and isotropic
  {G}aussian random fields with {M}at{\'e}rn covariance functions.
\newblock {\em The Annals of Statistics}, 49(6):3127--3152.

\bibitem[Mallik, 2001]{mallik2001inverse}
Mallik, R.~K. (2001).
\newblock The inverse of a tridiagonal matrix.
\newblock {\em Linear Algebra and its Applications}, 325(1--3):109--139.

\bibitem[Mandelbrot, 1982]{mandelbrot1982fractal}
Mandelbrot, B.~B. (1982).
\newblock {\em The Fractal Geometry of Nature}.
\newblock WH Freeman New York.

\bibitem[M{\"o}rters and Peres, 2010]{morters2010brownian}
M{\"o}rters, P. and Peres, Y. (2010).
\newblock {\em Brownian motion}, volume~30.
\newblock Cambridge University Press.

\bibitem[Nourdin, 2012]{Nourdin2012}
Nourdin, I. (2012).
\newblock {\em Selected aspects of fractional {B}rownian motion}.
\newblock Number~4 in Bocconi \& Springer Series. Springer.

\bibitem[O'Hagan, 1978]{OHagan1978}
O'Hagan, A. (1978).
\newblock Curve fitting and optimal design for prediction.
\newblock {\em Journal of the Royal Statistical Society. Series B
  (Methodological)}, 40(1):1--42.

\bibitem[O’Hagan, 2006]{o2006bayesian}
O’Hagan, A. (2006).
\newblock Bayesian analysis of computer code outputs: A tutorial.
\newblock {\em Reliability Engineering \& System Safety}, 91(10-11):1290--1300.

\bibitem[Petit, 2023]{Petit2023}
Petit, S. (2023).
\newblock Maximum likelihood estimation and prediction error for a {M}atérn
  model on the circle.
\newblock {\em arXiv:2209.07791v5}.

\bibitem[Petit et~al., 2022]{petit2021gaussian}
Petit, S., Bect, J., Feliot, P., and Vazquez, E. (2022).
\newblock Parameter selection in {G}aussian process interpolation: {A}n
  empirical study of selection criteria.
\newblock {\em arXiv:2107.06006v4}.

\bibitem[Rasmussen and Williams, 2006]{rassmussen2006gaussian}
Rasmussen, C.~E. and Williams, C.~K. (2006).
\newblock {\em Gaussian processes for machine learning}, volume~2.
\newblock MIT press Cambridge, MA.

\bibitem[Sacks et~al., 1989]{sacks1989design}
Sacks, J., Welch, W.~J., Mitchell, T.~J., and Wynn, H.~P. (1989).
\newblock Design and analysis of computer experiments.
\newblock {\em Statistical Science}, 4(4):409--423.

\bibitem[Schaback, 2018]{Schaback2018}
Schaback, R. (2018).
\newblock Superconvergence of kernel-based interpolation.
\newblock {\em Journal of Approximation Theory}, 235:1--19.

\bibitem[Shahriari et~al., 2015]{shahriari2015taking}
Shahriari, B., Swersky, K., Wang, Z., Adams, R.~P., and De~Freitas, N. (2015).
\newblock Taking the human out of the loop: A review of bayesian optimization.
\newblock {\em Proceedings of the IEEE}, 104(1):148--175.

\bibitem[Sniekers and van~der Vaart, 2015]{sniekers2015adaptive}
Sniekers, S. and van~der Vaart, A. (2015).
\newblock {Adaptive Bayesian credible sets in regression with a Gaussian
  process prior}.
\newblock {\em Electronic Journal of Statistics}, 9(2):2475 -- 2527.

\bibitem[Sniekers and van~der Vaart, 2020]{sniekers2020adaptive}
Sniekers, S. and van~der Vaart, A. (2020).
\newblock Adaptive bayesian credible bands in regression with a gaussian
  process prior.
\newblock {\em Sankhya A}, 82(2):386--425.

\bibitem[Stein, 1990]{Stein1990}
Stein, M.~L. (1990).
\newblock A comparison of generalized cross validation and modified maximum
  likelihood for estimating the parameters of a stochastic process.
\newblock {\em The Annals of Statistics}, 18(3):1139--1157.

\bibitem[Stein, 1993]{Stein1993}
Stein, M.~L. (1993).
\newblock Spline smoothing with an estimated order parameter.
\newblock {\em The Annals of Statistics}, 21(3):1522--1544.

\bibitem[Stein, 1999]{Stein1999}
Stein, M.~L. (1999).
\newblock {\em Interpolation of Spatial Data: {S}ome Theory for Kriging}.
\newblock Springer Series in Statistics. Springer.

\bibitem[Sundararajan and Keerthi, 2001]{sundararajan2001predictive}
Sundararajan, S. and Keerthi, S.~S. (2001).
\newblock Predictive approaches for choosing hyperparameters in {G}aussian
  processes.
\newblock {\em Neural Computation}, 13(5):1103--1118.

\bibitem[Szab\'{o} et~al., 2015]{Szabo2015}
Szab\'{o}, B., van~der Vaart, A.~W., and van Zanten, J.~H. (2015).
\newblock Frequentist coverage of adaptive nonparametric {B}ayesian credible
  sets.
\newblock {\em The Annals of Statistics}, 43(4):1391--1428.

\bibitem[van~der Vaart and van Zanten, 2008]{VaartZanten2008}
van~der Vaart, A.~W. and van Zanten, J.~H. (2008).
\newblock {\em Reproducing Kernel {H}ilbert spaces of {G}aussian Priors},
  volume~3 of {\em IMS Collections}, pages 200--222.
\newblock Institute of Mathematical Statistics.

\bibitem[Wahba, 1990]{Wahba1990}
Wahba, G. (1990).
\newblock {\em Spline Models for Observational Data}.
\newblock Number~59 in CBMS-NSF Regional Conference Series in Applied
  Mathematics. Society for Industrial and Applied Mathematics.

\bibitem[Wang and Loh, 2011]{WangLoh2011}
Wang, D. and Loh, W.-L. (2011).
\newblock On fixed-domain asymptotics and covariance tapering in {G}aussian
  random field models.
\newblock {\em Electronic Journal of Statistics}, 5:238--269.

\bibitem[Wang, 2007]{wang_almost-sure_2007}
Wang, W. (2007).
\newblock Almost-sure path properties of fractional {Brownian} sheet.
\newblock {\em Annales de l'Institut Henri Poincare (B) Probability and
  Statistics}, 43(5):619--631.

\bibitem[Wang, 2021]{Wang2021}
Wang, W. (2021).
\newblock On the inference of applying {G}aussian process modeling to a
  deterministic function.
\newblock {\em Electronic Journal of Statistics}, 15(2):5014--5066.

\bibitem[Wendland, 2005]{Wendland2005}
Wendland, H. (2005).
\newblock {\em Scattered Data Approximation}.
\newblock Number~17 in Cambridge Monographs on Applied and Computational
  Mathematics. Cambridge University Press.

\bibitem[Wenzel et~al., 2021]{wenzel2021novel}
Wenzel, T., Santin, G., and Haasdonk, B. (2021).
\newblock A novel class of stabilized greedy kernel approximation algorithms:
  Convergence, stability and uniform point distribution.
\newblock {\em Journal of Approximation Theory}, 262:105508.

\bibitem[Wynne et~al., 2021]{wynne2021convergence}
Wynne, G., Briol, F.-X., and Girolami, M. (2021).
\newblock Convergence guarantees for {G}aussian process means with misspecified
  likelihoods and smoothness.
\newblock {\em Journal of Machine Learning Research}, 22(123):1--40.

\bibitem[Xu and Stein, 2017]{XuStein2017}
Xu, W. and Stein, M.~L. (2017).
\newblock Maximum likelihood estimation for a smooth {G}aussian random field
  model.
\newblock {\em SIAM/ASA Journal on Uncertainty Quantification}, 5(1):138--175.

\bibitem[Ying, 1991]{Ying1991}
Ying, Z. (1991).
\newblock Asymptotic properties of a maximum likelihood estimator with data
  from a {G}aussian process.
\newblock {\em Journal of Multivariate Analysis}, 36(2):280--296.

\bibitem[Ying, 1993]{Ying1993}
Ying, Z. (1993).
\newblock Maximum likelihood estimation of parameters under a spatial sampling
  scheme.
\newblock {\em The Annals of Statistics}, 21(3):1567--1590.

\bibitem[Zhang, 2004]{Zhang2004}
Zhang, H. (2004).
\newblock Inconsistent estimation and asymptotically equal interpolations in
  model-based geostatistics.
\newblock {\em Journal of the American Statistical Association},
  99(465):250--261.

\end{thebibliography}

\end{document}